\DeclareMathOperator{\Sym}{Sym}
\newtheorem{Theorem}{Theorem}
\newtheorem{theorem}{Theorem}[section]
\newtheorem{lemma}[theorem]{Lemma}
\newtheorem{proposition}[theorem]{Proposition}
\theoremstyle{definition}
\newcommand{\F}{\mathbb{F}}
\newcommand{\Z}{\mathbb{Z}}
\newcommand{\Cent}{\mathrm{Cent}}
\renewcommand{\theta}{\vartheta}
\renewcommand{\l}{\ell}
\DeclareMathOperator{\tr}{tr}
\title{On exceptional groups of order $p^5$}
\date{\today}
\author{John R. Britnell}
\address{Department of Mathematics, Imperial College London, South Kensington Campus, London SW7 2AZ, United Kingdom}
\email{j.britnell@imperial.ac.uk}
\author{Neil Saunders}
\address{Heilbronn Institute for Mathematical Research, University of Bristol, School of Mathematics, University Walk, Bristol BS8 1TW, United Kingdom}
\email{neil.saunders@bristol.ac.uk}
\author{Tony Skyner}
\address{Heilbronn Institute for Mathematical Research, University of Bristol, School of Mathematics, University Walk, Bristol BS8 1TW, United Kingdom}
\email{tony.skyner@bristol.ac.uk}
\keywords{permutation representation, minimal degree}
\subjclass[2010]{20B35, (secondary) 20D15}
\begin{document}

\begin{abstract}
A finite group $G$ is \emph{exceptional} if it has a quotient $Q$ whose minimal faithful permutation degree is greater than that of $G$. We say that $Q$ is a \emph{distinguished} quotient. 

The smallest examples of exceptional $p$-groups have order $p^5$. For an odd prime $p$, we classify all pairs $(G,Q)$ where $G$ has order $p^5$ and $Q$ is a distinguished quotient. (The case $p=2$ has already been treated by Easdown and Praeger.) We establish the striking asymptotic result that as $p$ increases, the proportion of groups of order~$p^5$ with at least one exceptional quotient tends to $1/2$.
\end{abstract}

\maketitle

\section{Introduction}

Let $G$ be a finite group. The minimal degree $\mu(G)$ of $G$ is the least non-negative integer $n$ such that $G$ embeds into the symmetric group
$\Sym(n)$. The question of representing finite groups by permutations is one of the oldest in group theory, and the minimal degree $\mu(G)$ is a natural invariant of the group $G$. However it is well known that the function $\mu$ is badly behaved with regard to quotients: it is possible for a group $G$ to have a quotient $Q$ such that $\mu(G)<\mu(Q)$. Such cases are pitfalls in the design and implementation of permutation group algorithms (see 
\cite{Neumann}). 

The following terminology is derived from Easdown and Praeger \cite{EasdownPraeger}. If $G$ has a normal subgroup $N$ such that $\mu(G)<\mu(G/N)$, then we say that $N$ is a \emph{distinguished subgroup} of $G$, and that $G/N$ a \emph{distinguished quotient}. The group $G$ is \emph{exceptional} if it has at least one distinguished quotient $Q$; and we say also that $G$ is an \emph{exceptional extension} of $Q$. 

For every prime $p$, the smallest exceptional $p$-groups have order $p^5$. In the case $p=2$ this is known from \cite{EasdownPraeger}, in which it is shown there are two exceptional groups of order $32$; in each case the exceptional quotient is the quaternionic group $Q_{16}$. In the case of any odd prime
$p$, an example of an exceptional group of order $p^5$ is given by Lemieux \cite{Lemieux}. An argument for the non-existence of exceptional groups of order~$p^k$ for $k<5$ had previously appeared in his MSc thesis \cite{LemieuxThesis}. Because this argument is unpublished, and furthermore contains some errors, we give an independent proof of this result below.  

In this paper we give a complete classification of the exceptional groups of order~$p^5$, together with their distinguished subgroups, for any prime $p$.  Our main result is as follows. 
\begin{Theorem}
Let $G$ be a group of order $p^5$, and let $Q$ be a quotient of $G$ such that $\mu(Q)>\mu(G)$. 
Then $|Q|=p^4$, and the pair $(G,Q)$ is one of those listed in Table \ref{table:results}. 
\end{Theorem}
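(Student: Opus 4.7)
The plan is to split the proof into two stages. In stage one I establish the bound $|Q| = p^4$; in stage two I enumerate the pairs $(G,Q)$ with $|G|=p^5$ and $|Q|=p^4$ via the classification of groups of order $p^5$.

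For stage one, I rule out the cases $|Q| \in \{1, p, p^2, p^3\}$ in turn. Several general $p$-group bounds for $\mu$ are useful. First, if $G$ has $C_{p^k}$ as quotient then $G$ contains an element of order $p^k$, so $\mu(G) \geq p^k$. Second, in any minimising family of subgroups of $G$ with trivial intersection, at least one subgroup must have index at least $p^2$, since every maximal subgroup contains $\Phi(G)$; hence $\mu(G) \geq p^2$ whenever $\Phi(G) \neq 1$, i.e.\ whenever $G$ is not elementary abelian. Third, $\mu(G) \geq \mu(H)$ for every subgroup $H$ of $G$, so $\mu(G) \geq \mu(\Omega_1(Z(G))) = p\cdot\mathrm{rank}\,\Omega_1(Z(G))$. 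Together with the fact that for odd $p$ any non-cyclic $p$-group contains $C_p \times C_p$ (so $\mu(G) \geq 2p$ for any order-$p^5$ group), these bounds dispatch $|Q| \in \{1, p, p^2\}$ immediately, as well as the cyclic case $Q = C_{p^3}$ and the non-abelian cases $Q \in \{\mathrm{Heis}(p),\, p^{1+2}_-\}$ (both of $\mu(Q) = p^2$). The remaining subcases $Q \in \{C_p^3, C_{p^2} \times C_p\}$ require a dedicated lemma asserting $\mu(G) \geq \mu(Q)$, which I would prove by showing that any minimising family of core-free subgroups of $G$ must separate the distinct cyclic factors of $G^{\mathrm{ab}}$ projecting onto $Q$, via a rank-of-abelianisation argument.

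For stage two, since $N := \ker(G \to Q)$ has order $p$ it is central in $G$. I invoke the classification of groups of order $p^5$ (originally due to Bagnera) and, for each isomorphism class of $G$, enumerate the central subgroups $N$ of order $p$ together with the isomorphism type of the quotient $G/N$. For each resulting pair $(G,Q)$ I compute $\mu(G)$ and $\mu(Q)$ using the characterisation $\mu(H) = \min \sum [H:H_i]$ over families of subgroups of $H$ with trivial intersection, drawing on known tabulations of minimal degrees for groups of order $p^3$ and $p^4$ (e.g.\ from Lemieux's thesis). The exceptional pairs, for which $\mu(Q) > \mu(G)$, form Table~\ref{table:results}.

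The principal obstacle is the bookkeeping of stage two. There are on the order of $2p + O(1)$ isomorphism classes of groups of order $p^5$ for odd $p$, some parametrised by roots of unity in $\F_p^\times$, and the minimal-degree computations must be organised so as to yield a single table valid uniformly in $p$ rather than a prime-by-prime analysis. Secondary care is needed for small primes at which the classification degenerates (as controlled by $\gcd(p-1,3)$ and $\gcd(p-1,4)$), and for borderline groups in which several distinct families of subgroups attain the minimum sum of indices, requiring careful comparison to confirm the true value of $\mu(G)$.
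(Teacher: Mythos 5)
Your stage two is where the genuine gap lies, and it is the heart of the matter. Your plan inverts the paper's strategy: the paper first pins down the possible distinguished quotients (Proposition \ref{proposition:quotients} shows that for $p>3$ only $Q(p)$, $Q_1(p)$ and $Q_\alpha(p)$ can occur), and then classifies the central extensions of just these three groups by $C_p$ directly from presentations (\ref{eq:G16extension}) and (\ref{eq:G25extension}), never invoking the classification of groups of order $p^5$. You instead propose to run over all $2p+O(1)$ isomorphism classes of order $p^5$ and ``compute $\mu(G)$ and $\mu(Q)$'' for every central quotient. But you give no method for computing, uniformly in $p$, the minimal degrees of groups of order $p^5$, nor even for deciding whether $\mu(G)<p^3$: Lemieux's tabulations stop at order $p^4$, and the defining formula $\mu(H)=\min\sum|H:H_i|$ is not by itself a procedure that can be executed once for all primes. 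This is precisely where the paper's real work is done: Johnson's theorem (Theorem \ref{thm:johnson}) forces a minimal representation of a candidate extension to have exactly two orbits, which yields the criterion of Lemma \ref{lemma:exceptionalcondition} (two subgroups of order $p^3$ meeting in a non-central subgroup of order $p$), and verifying that criterion for the parametrised families reduces to diagonalisability of an explicit $2\times 2$ linear map, whence the quadratic-residue conditions such as $\left(\frac{1+4\lambda}{p}\right)=1$ appearing in Table \ref{table:results} (Propositions \ref{prop:ExP8P9} and \ref{Exceptionalwhenjnot0}). A tabulate-and-compare plan cannot produce those residue conditions, nor sort the parametrised groups into isomorphism classes, without an argument of this kind; as written, your stage two restates the problem rather than solving it.

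Stage one is closer to being sound, but the ``dedicated lemma'' is underspecified at the one point where it is actually needed. The case $Q\cong C_p^3$ follows from your own bounds (if $G$ is not elementary abelian then $\mu(G)\ge p^2\ge 3p=\mu(Q)$ for odd $p$, and if $G$ is elementary abelian then $Q$ embeds in $G$; the case $p=2$ is settled by $|G|=32>|\Sym(7)|_2$). The real work is $Q\cong C_{p^2}\times C_p$, where one must exclude $\mu(G)=p^2$, and the obstruction lives in the centre, not in the abelianisation: the paper's Lemma \ref{lem:criteria}(5) handles it by using Johnson's theorem to force a transitive minimal representation and hence a cyclic centre, and then exhibits a subgroup $\overline{C}Z\cong Q$ inside $G$, contradicting distinguishedness. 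Your proposed ``separation of the cyclic factors of $G^{\mathrm{ab}}$'' does not obviously engage with that configuration, since $\mu$ is not bounded below by $p$ times the rank of $G^{\mathrm{ab}}$ in general. Finally, your dispatch of non-abelian quotients of order $p^3$ silently assumes $\mu(Q)=p^2$, which fails at $p=2$ for $Q\cong Q_8$ (where $\mu=8$); this case is easy (no group of order $32$ acts faithfully on fewer than $8$ points), but it must be addressed, as must the $p=3$ anomalies, which the paper treats separately by computation.
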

Table \ref{table:results} gives an explicit presentation for each exceptional $G$ and distinguished quotient $Q$. For the sake of completeness, we include the results of Easdown and Praeger in the case $p=2$ where there is only one such $Q$, namely the quaternion $2$-group of order $16$. 

The number of exceptional groups of order $p^5$ is surprisingly large. It is perhaps worth noting that Easdown and Praeger \cite{EasdownPraeger} expressed doubt as to whether such groups existed at all for $p>2$, although they found exceptional groups of order $p^6$ for all primes $p$.
\begin{Theorem}\label{T:counting}
The total number of exceptional groups of order $p^5$ is
\[
\left\{\begin{array}{ll} 2 & \textrm{if $p=2$,} \\ 10 & \textrm{if $p=3$,}\\ p+6 & \textrm{otherwise.} \end{array}\right.
\]
\end{Theorem}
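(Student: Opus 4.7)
The plan is to obtain Theorem~\ref{T:counting} as a direct corollary of the main classification Theorem, i.e.\ by simply reading off Table~\ref{table:results} and counting distinct isomorphism classes of the first coordinate~$G$. Because the same group $G$ may appear opposite more than one distinguished quotient $Q$, and because the rows of the table are naturally organised into families parametrised by residues modulo $p$, the count is not trivially the number of rows: one has to collapse pairs sharing a first coordinate and identify isomorphic presentations within each parametrised family.

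For $p=2$ I would quote \cite{EasdownPraeger} directly, giving the value $2$. For an odd prime $p$, I would proceed family by family through Table~\ref{table:results}. For each family of presentations depending on parameters $\sa,\sx,\ldots$ ranging over $\F_p$ or $\F_p^\times$, I would determine the equivalence relation on parameter tuples induced by group isomorphism (typically an action by $\F_p^\times$-scalings or by small permutations of parameters), and extract the number of isomorphism classes as a polynomial or piecewise-linear function of~$p$. I would then take the union of the resulting lists across families, removing any group that occurs in more than one family (such a group has more than one distinguished quotient, but is still to be counted once). When $p\geq 5$ the parameter ranges are all large enough that no unexpected collisions occur, and the contributions from the various families add up to $p+6$.

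The case $p=3$ requires separate, explicit treatment, since $\F_3^\times$ has only two elements and therefore many parameter pairs that are inequivalent for $p\geq 5$ coincide over $\F_3$; in addition, one or two sporadic isomorphisms occur only in characteristic~$3$ (typically because the exponent-$p$ class of some presentation changes when $p=3$). I would simply list the $3$-specialisations of each presentation in Table~\ref{table:results} and identify the groups directly, obtaining~$10$.

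The principal obstacle is the isomorphism bookkeeping between presentations rather than any conceptual difficulty: one must be sure that groups appearing in distinct families of Table~\ref{table:results}, or arising from distinct parameter tuples in the same family, are correctly distinguished or identified. I would manage this by fixing a standard invariant list for each presentation (for instance the isomorphism type of $G/Z(G)$, of $G'$ and of the derived quotient $G/G'$, together with the exponent and, if necessary, the action of $\Aut(G)$ on $Z(G)$), and by labelling each row of Table~\ref{table:results} with a unique identifier from James's classification of groups of order~$p^5$. With this bookkeeping in place the counts $2$, $10$ and $p+6$ follow mechanically.
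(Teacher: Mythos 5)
Your overall route is the one the paper itself takes: Theorem~\ref{T:counting} is obtained by totting up the classification family by family (one family per distinguished quotient), quoting Easdown--Praeger for $p=2$, treating $p=3$ by direct identification of the groups, and collapsing the $Q_1(p)$- and $Q_\alpha(p)$-families, which coincide group-for-group. So the decomposition is not different from the paper's; the issue is that the one genuinely nontrivial step is left unexecuted.

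Concretely, for $p\ge 5$ you assert that ``the contributions from the various families add up to $p+6$'' without ever counting the admissible values of $\lambda$, and your working assumption that each parametrised family contributes a number of isomorphism classes expressible as a polynomial or piecewise-linear function of $p$ is false for the families $F^{(\zeta)}_4(\lambda)$ and $F^{(\zeta)}_5(\lambda)$ taken separately: the number of $\lambda$ with $\lambda^2+4$ (respectively $\lambda^2+4\alpha$) a non-zero square modulo $p$ fluctuates with quadratic-character data (already for $p=5,7,11,13$ the two counts split as $1+1$, $2+1$, $3+2$, $3+3$). Only the \emph{combined} count over the two families is uniform in $p$, and establishing that it equals $(p-1)/2$ requires an argument; the paper does this in Proposition~\ref{numberofexceptionalextensionsQ_1} by counting solutions of $r^2-\lambda^2=4\zeta\mu$ (the set $T$), yielding $(p+5)/2$ exceptional extensions of each of $Q_1(p)$ and $Q_\alpha(p)$, while Proposition~\ref{numberofexceptionalextensionsQ} gives $(p+7)/2$ for $Q(p)$ (here the count of $E_6(\lambda)$, namely $(p-3)/2$, is easy). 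One then needs Proposition~\ref{prop:isomorphisms} to see that the $Q_1$ and $Q_\alpha$ lists are literally the same groups (so an entire family of $(p+5)/2$ groups is removed as duplicates --- this is not a minor collision), and the observation that extensions of $Q(p)$ (class $2$) cannot coincide with extensions of $Q_\zeta(p)$ (class $3$), before concluding $(p+7)/2+(p+5)/2=p+6$. Without this quadratic-residue bookkeeping your proposal states the answer rather than proving it; with it, your plan becomes exactly the paper's proof.
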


The number of groups of order $p^5$ is at most $2p+71$. Theorem \ref{T:counting} that the number of exceptional groups of order $p^5$ is $p+6$ for $p>3$.  Hence the proportion of groups of order $p^5$ which are exceptional is asymptotically $1/2$. This suggests that the established term \emph{exceptional} for these groups (introduced in \cite{EasdownPraeger}) may be less appropriate than it has previously seemed. It would be interesting to understand the asymptotics for exceptional groups of order $p^6$ and larger, but we have nothing to add on this question at present.

\subsection{Notation for the exceptional groups}

The exceptional groups $G$ are listed in Table \ref{table:results}.  
For each possible distinguished quotient $Q$ we first give a presentation of $Q$ itself; we then give presentations of the exceptional extensions of $Q$. Our presentations use the following convention: the generators of $G$ are taken to be preimages of those of $Q$ (and denoted by the same letters), together with a generator $n$ of $N$. The generator $n$ is always central, and we shall write ``$n\ \textrm{central}$'' in our presentations as a convenient shorthand for the commutator relations. 

The names given to the extensions in the table are purely for ease of reference, and are not used in the text of the paper. The names given here to the various quotients, however, are introduced and used during the course of our argument. 

In the case that $p=2$ there are only two exceptional groups, $G_1$ and $G_2$ in the table; the classification in this case appears in \cite{EasdownPraeger} and will not be treated here. For each odd prime $p$, we take $\alpha=\alpha_p$ to be a fixed quadratic non-residue modulo $p$. We shall require the Legendre symbol,
\[
\left(\frac{a}{p}\right) = \left\{ \begin{array}{ll} $0$ & \textrm{if $p$ divides $a$,} \\ $1$ & \textrm{if $a$ is a quadratic residue modulo $p$,} \\
$-1$ & \textrm{if $a$ is a quadratic non-residue modulo $p$.} \end{array} \right.
\]
Where a parameter $\lambda$ appears in a presentation, it represents an integer in the range $[-(p-1)/2,\dots,(p-1)/2]$ satisfying the conditions stated in the Notes column. Distinct values of $\lambda$ yield presentations of non-isomorphic groups. 

The groups named $G_4$ and $G_6$ appear twice in the table with different presentations, as these groups possess two non-isomorphic distinguished quotients. Furthermore the exceptional extensions of the groups $Q_1(p)$ and $Q_\alpha(p)$ are isomorphic in pairs; details of these isomorphisms are given in Section \ref{s:isomorphisms}.

\begin{table}
\caption{Exceptional groups of order $p^5$ arranged by distinguished quotients.}\label{table:results}
\resizebox*{\linewidth}{!}{
\begin{tabular}{lll}
Group 	 		& Presentation 																	  				& 	Notes 	\\ 	
   \hline \\
$Q_{16}$ 			& $\langle x,y \mid x^8=1,\ y^2=x^4,\ [x,y]=x^{-2} \rangle$													& $	p=2$		\\
\quad $G_1$    	 	& $\langle x,y,n \mid x^8=n^2=1,\ y^2=x^4n,\ n\ \textrm{central},\ [x,y]=x^{-2}\rangle$ 				  			& 			\\
\quad $G_2$         	& $\langle x,y,n \mid x^8=n^2=1,\ y^2=x^4,\ n\ \textrm{central},\ [x,y]=x^{-2}n\rangle$  				  			& 			\\ 
\\

$Q_{81}$ 			& $\langle x,y,z \mid x^9=y^3=z^3=1,\ [x,y]=1,\ [x,z]=y,\  [y,z]=x^{-3} \rangle$ 									& 	$p=3$	\\
\quad $G_3$  		& $\langle x,y,z,n \mid x^9=y^3=z^3=n^3=1,\ n\ \textrm{central}, [x,y]=n,\ [x,z]=y,\ [y,z]=x^{-3}n \rangle$ 				&		 	\\
\quad $G_4$         	& $\langle x,y,z,n \mid x^9=y^3=z^3=n^3=1,\ n\ \textrm{central},\ [x,y]=1,\ [x,z]=y,\ [y,z]=x^{-3}n\rangle$ 				&		 	\\ 
\\

$Q_1(3)$ 			&  $\langle x,y,z \mid x^{9}=y^3=1,\ z^{3}=x^{3},\ [x,y]=1,\ [x,z]=y,\ [y,z]=x^3 \rangle$								& 	$p=3$ 	\\
\quad $G_4$         	& $\langle x,y,z,n \mid x^9=y^3=n^3=1,\ n\ \textrm{central},\ z^3=x^3n,\ [x,y]=n,\ [x,z]=y,\ [y,z]=x^{3}n\rangle$  			& \\
\quad $G_5$         	& $\langle x,y,z,n \mid x^9=y^3=n^3=1,\ n\ \textrm{central},\ z^3=x^3n,\ [x,y],\ [x,z]=y,\ [y,z]=x^{3}n^2\rangle$ 			& \\ 
\quad $G_6$         	& $\langle x,y,z,n \mid x^9=y^3=n^3=1,\ n\ \textrm{central},\ z^3=x^3,\ [x,y]=1,\ [x,z]=y,\ [y,z]=x^{3}n\rangle$ 			& \\ 
\\

$Q_{\alpha}(3)$ 		&  $\langle x,y,z \mid x^{9}=y^3=1,\ z^3=x^{6},\ [x,y]=1,\ [x,z]=y,\ [y,z]=x^{6}\rangle$							& $p=3$, ($\alpha=2$) 		\\
\quad $G_6$         	&  $\langle x,y,z,n \mid x^9=y^3=n^3=1,\ n\ \textrm{central},\ z^3=x^{6}n,\ [x,y]=1,\ [x,z]=y,\ [y,z]=x^{6}\rangle$			&	\\ 
\quad $G_7$ 	 	&$\langle x,y,z,n \mid x^9=y^3=n^3=1,\ n\ \textrm{central},\ z^3=x^{6},\ [x,y]=1,\ [x,z]=y,\ [y,z]=x^{6}n\rangle$ 			& 			\\ 
\\

$Q(p)$ 			& $\langle x,y,z \mid x^{p^2}=y^p=z^p=[x,y]=[x,z]=1,\ [y,z]=x^p \rangle$										& 	$p$ odd	\\
\quad $E_1$       	& $\langle x,y,z,n \mid x^{p^2}=y^p=z^p=1,\ n\ \textrm{central},\ [x,y]=[x,z]=1,\ [y,z]=x^pn\rangle$ 					& 			\\
\quad $E_2$       	& $\langle x,y,z,n \mid x^{p^2}=z^p=1,\ y^p=n,\ n\ \textrm{central},\ [x,y]=[x,z]=1,\ [y,z]=x^pn\rangle$ 					& 			\\
\quad $E_3$       	& $\langle x,y,z,n \mid x^{p^2}=y^p=1,\ z^p,\ n\ \textrm{central},\ [x,y]=1,\ [x,z]=n,\ [y,z]=x^pn\rangle$ 					& 			\\
\quad $E_4$       	& $\langle x,y,z,n \mid x^{p^2}=y^p=1,\ z^p=n,\ n\ \textrm{central},\ [x,y]=1,\ [x,z]=n,\ [y,z]=x^pn\rangle$ 				& 			\\
\quad $E_5$       	& $\langle x,y,z,n \mid x^{p^2}=z^p=1,\ y^p=n,\ n\ \textrm{central},\ [x,y]=1,\ [x,z]=n,\ [y,z]=x^p\rangle$ 				& 			\\
\quad $E_6(\lambda) $       & $\langle x,y,z,n \mid x^{p^2}=z^p=1,\ y^p=n,\ n\ \textrm{central},\ [x,y]=1,\ [x,z]=n^\lambda,\ [y,z]=x^pn\rangle$
       &  $\lambda\neq 0$, $\left(\frac{1+4\lambda}{p}\right)=1$ \\ 
       \\

$Q_1(p)$ 			&  $\langle x,y,z \mid x^{p^2}=y^p=1,\ z^{p}=x^{p},\ [x,y]=1,\ [x,z]=y,\ [y,z]=x^p \rangle$							& 	$p>3$  	\\
\quad $F^{(1)}_1$     & $\langle x,y,z,n \mid x^{p^2}=y^p=1,\ z^p=x^p,\ n\ \textrm{central},\ [x,y]=1,\ [x,z]=y,\ [y,z]=x^pn\rangle$ 				& 			\\
\quad $F^{(1)}_2$     & $\langle x,y,z,n \mid x^{p^2}=y^p=1,\ z^p=x^p,\ n\ \textrm{central},\ [x,y]=n^{-1},\ [x,z]=y,\ [y,z]=x^pn^2\rangle$ 		& 			\\
\quad $F^{(1)}_3$      & $\langle x,y,z,n \mid x^{p^2}=y^p=1,\ z^p=x^pn,\ n\ \textrm{central},\ [x,y]=1,\ [x,z]=y,\ [y,z]=x^pn\rangle$ 			& 			\\
\quad $F^{(1)}_4(\lambda)$       & $\langle x,y,z,n \mid x^{p^2}=y^p=1,\ z^p=x^pn,\ n\ \textrm{central},\ [x,y]=n^{-1},\ [x,z]=y,\ [y,z]=x^pn^{1+\lambda}\rangle$ &
$\lambda\ge 0$, $\left(\frac{\lambda^2+4}{p}\right)=1$ \\
\quad $F^{(1)}_5(\lambda)$       & $\langle x,y,z,n \mid x^{p^2}=y^p=1,\ z^p=x^pn,\ n\ \textrm{central},\ [x,y]=n^{-\alpha},\ [x,z]=y,\ [y,z]=x^pn^{\alpha+\lambda}\rangle$ & $\lambda\ge 0$, $\left(\frac{\lambda^2+4\alpha}{p}\right)=1$ \\
\\

$Q_\alpha(p)$ &  $\langle x,y,z \mid x^{p^2}=y^p=1,\ z^p=x^{\alpha p},\ [x,y]=1,\ [x,z]=y,\ [y,z]=x^{\alpha p}\rangle$						& 	$p>3$	\\
\quad $F^{(\alpha)}_1$       & $\langle x,y,z,n \mid x^{p^2}=y^p=1,\ z^p=x^{\alpha p},\ n\ \textrm{central},\ [x,y]=1,\ [x,z]=y,\ [y,z]=x^{\alpha p}n\rangle$ 			& \\
\quad $F^{(\alpha)}_2$       & $\langle x,y,z,n \mid x^{p^2}=y^p=1,\ z^p=x^{\alpha p},\ n\ \textrm{central},\ [x,y]=n^{-1},\ [x,z]=y,\ [y,z]=x^{\alpha p}n^{(\alpha+1)}\rangle$ & \\
\quad $F^{(\alpha)}_3$       & $\langle x,y,z,n \mid x^{p^2}=y^p=1,\ z^p=x^{\alpha p}n,\ n\ \textrm{central},\ [x,y]=1,\ [x,z]=y,\ [y,z]=x^{\alpha p}n\rangle$ & \\
\quad $F^{(\alpha)}_4(\lambda)$       & $\langle x,y,z,n \mid x^{p^2}=y^p=1,\ z^p=x^{\alpha p}n,\ n\ \textrm{central},\ [x,y]=n^{-1},\ [x,z]=y,\ [y,z]=x^{\alpha p}n^{\alpha+\lambda}\rangle$ & $\lambda\ge 0$, $\left(\frac{\lambda^2+4\alpha}{p}\right)=1$ \\
\quad $F^{(\alpha)}_5(\lambda)$       & $\langle x,y,z,n \mid x^{p^2}=y^p=1,\ z^p=x^{\alpha p}n,\ n\ \textrm{central},\ [x,y]=n^{-\alpha},\ [x,z]=y,\ [y,z]=x^{\alpha p}n^{\alpha(\alpha+\lambda)}\rangle$ & $\lambda\ge 0$, $\left(\frac{\lambda^2+4}{p}\right)=1$\\ 
\end{tabular}
}
\end{table}

\subsection{Background}

Neumann \cite{Neumann} has pointed out that there exists a sequence of groups $(G_i)$ with quotients $(Q_i)$ such that $\mu(Q_i)$ grows exponentially with $\mu(G_i)$. Neumann's example takes $G_i$ to be the direct product, and $Q_i$ to be the central product, of $i$ copies of the dihedral group $D_8$. Holt and Walton \cite{HoltWalton} have proved the existence of a constant $c$ such that $\mu(G/N) \leq c^{\mu(G)-1}$ for all groups $G$ and normal subgroups $N$. The constant $c$ is shown to be less than $4.5$.  

The second author \cite{Saunders2} has recently described a general construction of exceptional groups which, like Neumann's construction, uses the idea of the central product as a quotient of the direct product. Instances of this construction are found to occur naturally in the context of binary polyhedral groups. 

Recall that the \emph{core} of a subgroup $H$ of $G$ is the intersection of the conjugates of $H$ in $G$. For a permutation representation of a group $G$ on a set $\Omega$, let $\{H_1,\dots,H_\l\}$ be a set of point stabilizers, one for each orbit of $G$ on $\Omega$. These subgroups determine the representation up to equivalence. The representation is faithful if and only
if the cores of the subgroups $H_1,\dots,H_\l$ intersect trivially. The degree of the representation is given by the sum $\sum_{i=1}^\l |G:H_i|$, and so the
minimal degree of $G$ can be expressed as the minimum value taken by this sum over collections of subgroups $\{H_i\}$ whose cores have trivial intersection (see \cite{Saunders} for a fuller treatment). 

We shall require the following theorem of Johnson \cite{Johnson}.
\begin{theorem}[Theorem 3 of \cite{Johnson}]\label{thm:johnson}
Let $G$ be a $p$-group whose centre $Z(G)$ is minimally generated by $d$ elements. Let $\{H_1, \ldots, H_\l\}$ be point stabilizers for a minimal representation of $G$. Then
\begin{enumerate}
\item if $p$ is odd then $\l=d$,
\item if $p=2$ then $\frac{d}{2}\leq \l \leq d$, with the bound $\l=d$ being achieved by some minimal representation of $G$.
\end{enumerate}
\end{theorem}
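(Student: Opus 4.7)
The plan is to translate $\mu(G)$ into subgroup-theoretic data and exploit the central socle $S := \Omega_1(Z(G))$, an elementary abelian $p$-group of rank $d$ which I view as an $\F_p$-vector space. Fix a minimal faithful representation of $G$ with point stabilizers $H_1,\dots,H_\l$ and cores $K_i := \core_G(H_i)$. Since every nontrivial normal subgroup of a $p$-group meets $S$ nontrivially, the faithfulness condition $\bigcap_i K_i = 1$ is equivalent to $\bigcap_i C_i = 0$ in $S$, where $C_i := K_i \cap S$. The upper bound $\l \le d$ then follows from a linear-algebra reduction: any family of $\F_p$-subspaces of $S$ with trivial intersection contains a subfamily of size at most $d$ with the same property, so if $\l > d$ one could discard a redundant orbit and strictly reduce the degree, contradicting minimality.

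For the lower bound I would use an orbit-splitting construction. Suppose some $c_i := \codim_S C_i$ is at least $2$. Choose hyperplanes $T_1, T_2$ of $S$ with $C_i \le T_1, T_2$ and $T_1 \cap T_2 = C_i$, and set $H_i^{(j)} := T_j H_i$ for $j = 1,2$. Since $T_j$ is central in $G$, one computes $T_j \cap H_i = T_j \cap K_i = C_i$, whence $|G : H_i^{(j)}| = |G:H_i| / p^{c_i - 1}$. The key step is the identity $\core_G(H_i^{(j)}) = T_j K_i$, which gives $\core_G(H_i^{(j)}) \cap S = T_j$, a hyperplane of $S$. Replacing orbit $i$ by the two new orbits preserves faithfulness, because
\[
T_1 \cap T_2 \cap \bigcap_{k \ne i} C_k \;=\; C_i \cap \bigcap_{k \ne i} C_k \;=\; 0,
\]
and multiplies the contribution of orbit $i$ to the total degree by $2/p^{c_i - 1}$. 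For $p$ odd, or for $p = 2$ with $c_i \ge 3$, this ratio is strictly less than $1$, contradicting minimality. Hence in a minimal representation $c_i = 1$ for all $i$ when $p$ is odd, and $c_i \le 2$ for all $i$ when $p = 2$; combined with $\sum_i c_i \ge d$ (forced by trivial intersection in $\F_p^d$), this yields $\l = d$ in the odd case and $\l \ge d/2$ when $p = 2$. For the extra assertion in the $p = 2$ case, splitting at $c_i = 2$ is degree-preserving, so iterating produces a minimal representation with every $c_i = 1$ and hence $\l = d$.

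I expect the main technical obstacle to be verifying the core identity $\core_G(H_i^{(j)}) = T_j K_i$. This amounts to showing that killing the central subgroup $T_j K_i / K_i$ in the quotient action $G/K_i \curvearrowright \Omega_i$ introduces no further redundancy --- equivalently, that for a central normal subgroup $N$ of the faithful action $G/K_i \curvearrowright \Omega_i$, the kernel of the induced action of $G/K_i$ on $\Omega_i / N$ equals $N$. This follows from $\core_{G/K_i}(H_i/K_i) = 1$ combined with the normality of $N$, which makes each translate of the stabilizer a coset of the same $N$.
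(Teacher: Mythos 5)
The paper does not prove this result; it quotes it verbatim from Johnson, so there is no internal proof to compare with. Measured against the standard (Johnson-style) argument, your strategy --- identify the socle $S=\Omega_1(Z(G))$ as an $\F_p$-space of dimension $d$, translate faithfulness into $\bigcap_i C_i=0$ with $C_i=\core_G(H_i)\cap S$, prove $\ell\le d$ by discarding redundant orbits, and force $\codim_S C_i$ small by splitting an orbit along central hyperplanes --- is exactly the right one, and your degree computation $|G:T_jH_i|=|G:H_i|/p^{c_i-1}$ (via $T_j\cap H_i=C_i$) is correct. But two steps are wrong as written. First, when $c_i\ge 3$ there do not exist two hyperplanes $T_1,T_2$ of $S$ with $T_1\cap T_2=C_i$: two hyperplanes meet in codimension at most $2$. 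You must instead take $c_i$ hyperplanes containing $C_i$ whose intersection is $C_i$ (so that faithfulness is preserved), and the contribution of the orbit is then multiplied by $c_i/p^{c_i-1}$ rather than $2/p^{c_i-1}$; fortunately this is still $<1$ for $p$ odd, $c_i\ge2$, and for $p=2$, $c_i\ge3$, and equals $1$ for $p=2$, $c_i=2$, so the conclusions you draw survive the repair.

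Second, your ``key identity'' $\core_G(T_jH_i)=T_jK_i$ is false in general, and so is the lemma you propose to prove it with: for a faithful action of $\overline{G}=G/K_i$ with central $N$, the kernel of the action on $N$-orbits need not be $N$. Take $\overline{G}$ extraspecial of order $p^3$, $\overline{H}$ a non-central subgroup of order $p$ (core-free), $N=Z(\overline{G})$: the kernel on $N$-orbits is $N\overline{H}$, of index $p$. Likewise, for $G=E\times E$ with $E$ extraspecial, $H=A_1\times A_2$ with $A_i$ non-central of order $p$, and $T_1=Z(E)\times 1$, one gets $\core_G(T_1H)=Z(E)A_1\times 1\neq T_1\core_G(H)$. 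The saving grace is that your argument never needs the full identity, only its trace on the socle, and that is true for elementary reasons: if $s\in S\cap T_jH_i$, write $s=th$ with $t\in T_j$; then $h=t^{-1}s\in S\cap H_i$, and $S\cap H_i$ is a central (hence normal) subgroup contained in $H_i$, so $S\cap H_i\le K_i\cap S=C_i\le T_j$, giving $S\cap T_jH_i=T_j$ and therefore $\core_G(T_jH_i)\cap S=T_j$. Since the kernel of the refined representation is normal in $G$ and meets $S$ inside $\bigl(\bigcap_j T_j\bigr)\cap\bigcap_{k\ne i}C_k=0$, faithfulness follows. With these two repairs --- $c_i$ hyperplanes instead of two, and the socle-level computation in place of the false core identity --- your proof is complete and coincides with the standard argument for Johnson's theorem.
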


It will be helpful to collect together here a number of conditions which ensure that a quotient $Q$ of a group $G$ is not distinguished.
\begin{lemma}\label{lem:criteria}
Let $G$ be group, and $Q$ a quotient of $G$. If any one of the following conditions is met, then $Q$ is not a distinguished quotient of $G$.
\begin{enumerate}
\item $Q$ is isomorphic to a subgroup of $G$.
\item $G$ is abelian.
\item $Q$ is cyclic.
\item $Q$ is elementary abelian.
\item $G$ is a $p$-group where $p$ is an odd prime, and $Q$ has a maximal cyclic subgroup.
\end{enumerate}
\end{lemma}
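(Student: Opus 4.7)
The plan is to verify each of the five conditions in turn. Conditions (1), (2), and (3) are essentially immediate. For (1), composing an embedding $Q \hookrightarrow G$ with a minimal faithful permutation representation of $G$ yields a faithful representation of $Q$ of degree $\mu(G)$. For (2), the classical formula for $\mu$ of a finite abelian group in terms of its primary decomposition, together with the fact that invariant-factor exponent sequences are dominated termwise under quotients, gives monotonicity of $\mu$ on abelian groups. For (3), if $Q$ is cyclic of order $m$, then any lift of a generator of $Q$ has order divisible by $m$, so $G$ contains a cyclic subgroup isomorphic to $Q$; now apply (1).

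For (4), where $Q$ is elementary abelian of rank $d$ and $\mu(Q)=pd$, I would embed $G$ into $S_{\mu(G)}$ via a minimal faithful representation and observe that the image still surjects onto $(\mathbb{Z}/p)^d$. A Sylow $p$-subgroup of the image is then a $p$-subgroup of $S_{\mu(G)}$ whose Frattini rank is at least $d$. The key ingredient is the known bound that the Frattini rank of any $p$-subgroup of $S_n$ is at most $\lfloor n/p\rfloor$, which gives $\mu(G)\geq pd$.

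For (5), I would proceed by case analysis on the isomorphism type of $Q$. For $p$ odd, a $p$-group with a cyclic subgroup of index $p$ is one of $\mathbb{Z}/p^n$, $\mathbb{Z}/p^{n-1}\times\mathbb{Z}/p$, or the modular maximal-cyclic group of order $p^n$. The cyclic case reduces to (3). For the modular group, in which $\mu(Q)=p^{n-1}$, applying (3) to the cyclic index-$p$ subgroup of $Q$ embeds $\mathbb{Z}/p^{n-1}$ into $G$, so $\mu(G)\geq p^{n-1}=\mu(Q)$. For $Q=\mathbb{Z}/p^{n-1}\times\mathbb{Z}/p$, where $\mu(Q)=p^{n-1}+p$, I would combine Theorem~\ref{thm:johnson} with a centralizer analysis: an element $g\in G$ of order $p^{n-1}$ obtained from (3) has an abelian centralizer $C_G(g)$, and either $C_G(g)$ is large enough to contain a subgroup isomorphic to $Q$ (so (1) applies), or else $d(Z(G))\geq 2$, in which case the minimal faithful representation of $G$ has at least two orbits by Theorem~\ref{thm:johnson}, one of size at least $p^{n-1}$ (carrying a cycle of $g$) and another of size at least $p$, giving $\mu(G)\geq p^{n-1}+p$.

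The main obstacle will be (4): the required bound on Frattini ranks of $p$-subgroups of $S_n$, while classical, is not entirely elementary and should be carefully justified. In (5), the most delicate sub-case is $Q=\mathbb{Z}/p^{n-1}\times\mathbb{Z}/p$ when $Z(G)$ is cyclic, where one must combine the analysis of the unique minimal normal subgroup of $G$ (which lies in $Z(G)$) with the fact that $\mu(G)$ is a $p$-power to conclude that every core-free subgroup of $G$ has index at least $p^n\geq p^{n-1}+p$.
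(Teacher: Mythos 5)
Your items (1)--(3) are correct and essentially the paper's argument. For (4), your plan is sound in outline, but be aware of what it rests on: for a $p$-group the Frattini quotient \emph{is} its largest elementary abelian quotient, so the bound you invoke (Frattini rank of a $p$-subgroup of $S_n$ is at most $\lfloor n/p\rfloor$) is precisely the $p$-group case of the theorem of Kov\'acs and Praeger that the paper cites for this very point; your Sylow reduction then recovers the general case from it. So either you cite that result (as the paper does, making the Sylow detour unnecessary), or you owe a genuinely nontrivial proof which the proposal does not supply. This is acceptable as a citation, but it is not an alternative, more elementary route.

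The genuine gap is in (5), in the sub-case $Q\cong C_{p^{n-1}}\times C_p$ with $Z(G)$ cyclic. The claim that $C_G(g)$ is abelian is simply false in general (take $g$ central in a non-abelian $G$), and the dichotomy ``either $C_G(g)$ contains a copy of $Q$, or $d(Z(G))\ge 2$'' is asserted rather than proved; it is exactly the first disjunct that needs work. What makes it true (and what the paper does) is the following pair of observations, neither of which appears in your sketch: (i) a lift $g$ of a generator of $C$ has order $p^{n-1+k}$ for some $k\ge 0$, and if $k\ge 1$ then $\mu(G)\ge p^{n}\ge p^{n-1}+p=\mu(Q)$ and we are already done, so one may assume $g$ has order exactly $p^{n-1}$, whence $\langle g\rangle$ maps isomorphically onto $C$ and meets the kernel $N$ of $G\to Q$ trivially; (ii) since $Z(G)$ is cyclic, $G$ has a unique minimal normal subgroup $Z$, and the relevant fact is not that $Z\le Z(G)$ but that $Z\le N$ (any nontrivial normal subgroup contains $Z$; the case $N=1$ is trivial). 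Only then does $\langle g\rangle Z\cong C_{p^{n-1}}\times C_p\cong Q$ embed in $G$, and (1) finishes the argument. Your closing remark that ``every core-free subgroup of $G$ has index at least $p^n$'' presupposes this embedded copy of $Q$ (together with transitivity of the minimal representation, which is where Theorem \ref{thm:johnson} enters in the cyclic-centre case), so as written the sketch is circular at the decisive step. Also note $\mu(G)$ is in general only a sum of $p$-powers; it is a single $p$-power here only because Johnson's theorem forces transitivity when $Z(G)$ is cyclic.
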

\begin{proof}
\begin{enumerate}
\item A faithful representation of $G$ restricts to a faithful representation of any subgroup.
\item This follows from the fact that every quotient of a finite abelian group $G$ is isomorphic to a subgroup of $G$.
\item If $Q$ is cyclic then $G$ has a subgroup isomorphic to $Q$.
\item This is stated in \cite{KovacsPraeger} as a consequence of the main theorem.
\item This appears in \cite{LemieuxThesis}. The proof is short and so we give it here for convenience.
Suppose that $|Q|=p^n$ and that $Q$ has a cyclic subgroup $C$ of order $p^{n-1}$. It follows that $G$ itself has a cyclic subgroup subgroup $\overline{C}$ of order $p^{n-1}$, and in particular that $\mu(G)\ge p^{n-1}$.
We may assume that $Q$ is not itself cyclic, and hence that $Q$ is an extension of $C$ by a group $H\cong C_p$. It is well-known (see \cite[Theorem 4.1]{Brown} for instance) that any such extension is split (since $p$ is odd). If $Q$ is not abelian, then its non-normal factor $H$ is a core-free subgroup of index $p^{n-1}$, and so $\mu(Q)\le p^{n-1} \le \mu(G)$. The remaining case is that $Q\cong C\times H$, in which case $\mu(Q)=p^{n-1}+p$. Now if $Q$ is distinguished, then we must have $\mu(G)=p^{n-1}$. Hence a minimal degree representation of $G$ is transitive, and now by Theorem \ref{thm:johnson} it follows that $G$ has a cyclic centre, and hence that $G$ has a unique normal subgroup $Z$ of order $p$. But $Z$ is contained in the kernel of the canonical map onto $Q$, and hence intersects trivially with $\overline{C}$. So the subgroup $\overline{C}Z$ of $G$ is isomorphic to $C_{p^{n-1}}\times C_p\cong Q$.
\end{enumerate}\end{proof}

Kov\'acs and Praeger \cite{KovacsPraeger} have conjectured that no abelian quotient is distinguished. In later work \cite{KovacsPraeger2} they have shown that if a minimal counterexample $G$ exists to this conjecture, then $G$ is a $p$-group, with $Q$ being the quotient by the commutator subgroup $G'$, and with $\mu(Q)=\mu(G)+p$. Perhaps for this reason, the principal focus of work in this area has been on $p$-groups; however the conjecture remains open. Franchi \cite{Franchi} has shown that $\mu(G/G') \leq \mu(G)$ whenever $G$ contains a maximal abelian subgroup.  

The hypothesis that $p$ is odd in Lemma \ref{lem:criteria}(5) is necessary only to exclude the case that the quotient $Q$ is a generalized quaternion group; it is known from \cite{EasdownPraeger} that these groups arise as distinguished quotients. Excepting these groups, the result holds when $p=2$ as well.

\subsection{Organisation of the Paper}

In Section \ref{s:prelim} we set out some preliminary results. We show that there are no exceptional $p$-groups of order at most $p^4$, and that if~$G$ is an exceptional group of order $p^5$ with distinguished quotient $N$, then $N$ has order $p$. We classify the possible isomorphism classes of the quotient $Q=G/N$. Some of these results have appeared previously in \cite{LemieuxThesis}.
We also deal with exceptional groups of order $3^5$, which require special treatment. However groups of this order are easily handled by computer calculations, and we have not thought it necessary to give further justification for our results.  

In Section \ref{s:Q} we consider exceptional extensions of the group we have named $Q(p)$, in the case $p>3$. Similarly, in Section \ref{s:Q_1} we consider extensions of the two groups we have named $Q_1(p)$ and $Q_\alpha(p)$. These admit a common treatment, and indeed their exceptional extensions turn out to be the same up to isomorphism. 

\section{Preliminaries}\label{s:prelim}

\subsection{Groups of order at most $p^4$}
\begin{proposition}\label{prop:smallquotients}
Let $p$ be a prime, and let $Q$ be a distinguished quotient of a $p$-group~$G$. Then $|Q|>p^3$.
\end{proposition}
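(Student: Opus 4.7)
The plan is to run through the isomorphism types of $p$-groups of order at most $p^3$ and rule out each as a distinguished quotient, using Lemma~\ref{lem:criteria} wherever possible and reserving a short structural argument for the only remaining case. For $|Q|\le p^2$ the classification of $p$-groups of this order forces $Q$ to be either cyclic or elementary abelian, so parts (3) or (4) of Lemma~\ref{lem:criteria} apply. For $|Q|=p^3$ with $p$ odd, the abelian possibilities $C_{p^3}$, $C_{p^2}\times C_p$, $C_p^3$ are handled by parts (3), (5), (4) respectively; and the nonabelian group of exponent $p^2$ has a cyclic subgroup of index $p$, so is also killed by part~(5). This leaves the Heisenberg group $H_p$ of exponent $p$ as the only case that Lemma~\ref{lem:criteria} fails to address.

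The decisive step, which I expect to be the main obstacle, is the following estimate: \emph{if $G$ is any $p$-group with $\mu(G)<p^2$, then $G$ is elementary abelian.} The proof I have in mind is short and does not require Johnson's theorem. Write $\mu(G)=\sum_{i=1}^{\l}|G:H_i|$ for the stabilizers of a minimal faithful representation. Each summand is a power of $p$ strictly greater than $1$ and strictly less than $p^2$, hence equal to $p$, so every $H_i$ is maximal in $G$. Because $G$ is a $p$-group, every maximal subgroup is normal, and so $H_i$ coincides with its own core. The faithfulness condition then becomes $\bigcap_i H_i = 1$, and the quotient maps realise $G$ as a subgroup of $\prod_i G/H_i\cong C_p^\l$, showing that $G$ is elementary abelian. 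Specialising to the transitive action of $H_p$ on the cosets of a non-central subgroup of order $p$ (which is core-free, as the centre is the unique minimal normal subgroup of $H_p$) yields $\mu(H_p)=p^2$. Since $H_p$ is nonabelian, any $p$-group $G$ with $H_p$ as a quotient is itself nonabelian, and therefore satisfies $\mu(G)\ge p^2=\mu(H_p)$; so $H_p$ is not distinguished.

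For $p=2$ the same estimate disposes of the dihedral group $D_8$, since $\mu(D_8)=4=2^2$. The two remaining isomorphism types of order $8$ that are not immediately covered by Lemma~\ref{lem:criteria}, namely the abelian group $C_4\times C_2$ (for which part~(5) requires $p$ odd) and the quaternion group $Q_8$ (for which the estimate is too weak, as $\mu(Q_8)=8>4$), are handled by the classification of Easdown and Praeger in \cite{EasdownPraeger}.
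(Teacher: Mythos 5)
Your argument is correct, and its overall shape matches the paper's: enumerate the isomorphism types of order at most $p^3$, kill almost all of them with Lemma~\ref{lem:criteria}, and finish the non-abelian case with the fact that a non-abelian $p$-group has minimal faithful degree at least $p^2$. The worthwhile difference is that you actually prove that last fact, whereas the paper simply asserts that ``$p^2$ is the smallest possible degree of a non-abelian $p$-group acting faithfully''; your argument for it (a faithful representation of degree less than $p^2$ has all orbits of size $p$, so the stabilizers are maximal, hence normal and self-cored, and $G$ embeds in $C_p^{\ell}$) is short, self-contained, and is exactly the justification the paper's proof is missing. Two minor points of divergence: you dispose of the extraspecial group of exponent $p^2$ via Lemma~\ref{lem:criteria}(5) rather than via the degree estimate, which is fine; and for $p=2$ you defer $C_4\times C_2$ and $Q_8$ to Easdown and Praeger, whereas the paper handles $C_4\times C_2$ by the remark extending part (5) to non-quaternion quotients when $p=2$, and waves at $Q_8$ with an ``easy to check''. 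Your deferral is no less rigorous than the paper's treatment, though if you wanted the argument self-contained you would still owe a direct verification that no $2$-group of degree less than $8$ has $Q_8$ as a quotient.
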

\begin{proof}
Let $G$ be a $p$-group and $Q$ a proper quotient of order at most $p^3$. Suppose first that $Q$ is abelian. Then since any abelian $p$-group of order $p^3$ is elementary abelian, or else has a maximal cyclic subgroup, we see from Lemma \ref{lem:criteria} that $Q$ is not distinguished.
But if $Q$ is  non-abelian then it is extraspecial of order $p^3$. If $Q$ is not the quaternion group $Q_8$, then $\mu(G/N)=p^2$; but $p^2$ is the smallest possible degree of a non-abelian $p$-group acting faithfully, and so $\mu(G)\ge\mu(G/N)$. If $Q=Q_8$ then $\mu(Q)=8$, and it is easy to check that no
$2$-group of degree less than $8$ has $Q$ as a quotient.
\end{proof}

It follows immediately from Proposition \ref{prop:smallquotients} that no $p$-group of order at most $p^4$ is exceptional.

\subsection{Distinguished quotients}\label{s: quotients}

Throughout this section, we suppose that $p$ is an odd prime. The following result is essentially due to Lemieux \cite{LemieuxThesis}. However he mistakenly excludes from his list the group that we have called $Q_{81}$ (called by him $G_{28}$). Also, one extra group in his list (which he calls $G_{29}$) is excluded by our arguments.
\begin{proposition} \label{proposition:quotients}
Suppose that $G$ is an $p$-group of order $p^5$ with a distinguished subgroup $N$. Then $N$ is cyclic and $G/N$ is isomorphic to one of the following groups.
\begin{eqnarray*}
Q_{81} &=& \langle x,y,z \mid x^9=y^3=z^3=1,\ [x,y]=1,\ [x,z]=y,\  [y,z]=x^{-3} \rangle\ \textrm{($p=3$)},\\
Q(p) &=& \langle x,y,z \mid x^{p^2}=y^p=z^p=[x,y]=[x,z]=1,\ [y,z]=x^p \rangle, \\
Q_1(p) &=& \langle x,y,z \mid x^{p^2}=y^p=1,\ z^{p}=x^{p},\ [x,y]=1,\ [x,z]=y,\ [y,z]=x^p \rangle, \\
Q_\alpha(p) &=& \langle x,y,z \mid x^{p^2}=y^p=1,\ z^p=x^{\alpha p},\ [x,y]=1,\ [x,z]=y,\ [y,z]=x^{\alpha p}\rangle,
\end{eqnarray*}
where $\alpha$ is a quadratic non-residue modulo $p$.
\end{proposition}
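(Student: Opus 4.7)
The plan is to first pin down the order of $N$, and then classify the candidates for $Q=G/N$ among the groups of order $p^4$, applying the criteria of Lemma~\ref{lem:criteria} to eliminate most possibilities.

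By Proposition~\ref{prop:smallquotients}, any distinguished quotient of a $p$-group has order strictly greater than $p^3$. Since $|G|=p^5$ and $N$ is nontrivial, this forces $|N|=p$ (so $N\cong C_p$ is automatically cyclic) and $|Q|=p^4$. This establishes the first claim and reduces the problem to identifying which groups of order $p^4$ can occur as $Q$.

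Next I would invoke the well-known classification of groups of order $p^4$ for odd $p$ (15 isomorphism classes) and apply Lemma~\ref{lem:criteria}. Parts~(3) and~(4) exclude $C_{p^4}$ and $C_p^4$ respectively. Part~(5) excludes every group of order $p^4$ that contains an element of order $p^3$: such an element generates a cyclic subgroup of index~$p$, which is maximal. This rules out $C_{p^3}\times C_p$, the modular maximal-cyclic group of order $p^4$, and every non-abelian group of exponent $p^3$. The remaining candidates, all of exponent at most $p^2$, are: the abelian groups $C_{p^2}\times C_{p^2}$ and $C_{p^2}\times C_p\times C_p$; the three non-abelian exponent-$p^2$ groups $Q(p)$, $Q_1(p)$, $Q_\alpha(p)$; and (for $p\ge 5$) a small number of non-abelian exponent-$p$ groups, notably $H_p\times C_p$, where $H_p$ is the Heisenberg group of order $p^3$.

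For each surviving candidate $Q$ outside the target list, I would argue that $Q$ is not a distinguished quotient by combining Lemma~\ref{lem:criteria}(1), applied whenever $Q$ embeds into every extension $G$, with explicit lower bounds on $\mu(G)$ drawn from Theorem~\ref{thm:johnson}. For instance, in any non-split central extension $G$ of $C_p$ by $C_{p^2}\times C_{p^2}$, the centre $Z(G)$ has rank~$3$; by Theorem~\ref{thm:johnson}(1), a minimal faithful permutation representation has exactly three orbits, and a direct enumeration of the normal subgroups of $G$ shows that any subgroup of $G$ with an admissible core (contributing to such a representation) has index at least $p^2$. Hence $\mu(G)\ge 3p^2>2p^2=\mu(C_{p^2}\times C_{p^2})$. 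An analogous analysis handles $C_{p^2}\times C_p\times C_p$; for the exponent-$p$ candidates one either checks directly that $Q$ embeds in every extension $G$ (so criterion~(1) applies), or constructs a faithful permutation representation of $G$ of degree at most $\mu(Q)$.

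The main obstacle is precisely these direct $\mu$-computations in the non-abelian extension cases: one must enumerate normal subgroups of $G$, identify all admissible collections of cores with trivial intersection, and minimise the associated sum of indices. For $p=3$, where some of the structural estimates above degenerate and the extra group $Q_{81}$ arises, the authors (as announced in the introduction) verify the remaining cases by direct computer calculation, which is straightforward at this order.
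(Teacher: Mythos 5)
Your first step (forcing $|N|=p$ and $|Q|=p^4$ via Proposition~\ref{prop:smallquotients}) agrees with the paper, but after that there are genuine gaps. First, your bookkeeping of the surviving candidates is wrong: criteria (3)--(5) of Lemma~\ref{lem:criteria} do not reduce the non-abelian possibilities to ``$Q(p)$, $Q_1(p)$, $Q_\alpha(p)$ plus some exponent-$p$ groups''. For $p\ge 5$ there remain further non-abelian groups of exponent $p^2$ with no cyclic maximal subgroup, e.g.\ $E\times C_p$ with $E$ extraspecial of exponent $p^2$, and $C_{p^2}\rtimes C_{p^2}$, as well as a second exponent-$p$ group (the maximal class one); several of these have $\mu(Q)\le 2p^2$ and are exactly the delicate cases. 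The paper needs the additional observation that $G$ must be non-abelian, so $\mu(G)\ge p^2$ and hence $\mu(Q)>p^2$, and even then six candidates survive besides the four listed. Second, the one case you treat in detail is based on a false claim: it is not true that every non-split central extension $G$ of $C_p$ by $C_{p^2}\times C_{p^2}$ has centre of rank $3$. Take $G=\langle a,b\mid a^{p^3}=b^{p^2}=1,\ a^b=a^{1+p^2}\rangle$: here $N=G'=\langle a^{p^2}\rangle\le\Phi(G)$, so the extension is non-split, $G/N\cong C_{p^2}\times C_{p^2}$, yet $Z(G)=\langle a^p,b^p\rangle\cong C_{p^2}\times C_p$ has rank $2$. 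So Theorem~\ref{thm:johnson} does not give you three orbits, and your bound $\mu(G)\ge 3p^2$ does not follow; in this example one needs a different reason (the element $a$ of order $p^3$ forces $\mu(G)\ge p^3$). Even in the rank-$3$ cases, the assertion that every admissible point stabilizer has index at least $p^2$ is exactly the content that needs proof, and ``direct enumeration'' is not supplied. The suggestion that for the exponent-$p$ candidates ``$Q$ embeds into every extension $G$'' is likewise unsubstantiated.

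The paper avoids all of this case-by-case work with a single indirect argument that your proposal does not contain. For any of the six surviving non-target candidates one has $\mu(Q)\le 2p^2$, so a distinguished extension would satisfy $\mu(G)<2p^2$, hence $\mu(G)=p^2+cp$ with $c<p$; this forces $G\cong H\times B$ with $H$ non-abelian and faithful transitive on $p^2$ points and $B$ elementary abelian. Johnson's theorem (Theorem~\ref{thm:johnson}) then gives $H$ a cyclic centre, the distinguished subgroup $N$ lies in $H$, and since there are no exceptional groups of order at most $p^4$ one gets $H=G$, so $G\le C_p\wr C_p$. Consequently $G$ has an elementary abelian maximal subgroup on which any outside element acts as a single Jordan block of size $4$, whence $Z(G/N)$ is cyclic and a minimal representation of $G/N$ is transitive --- impossible, since each of the six candidates has non-cyclic centre. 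Without this mechanism (or a correct, complete substitute for every one of the leftover groups), your proposal does not establish the proposition for $p\ge 5$; deferring $p=3$ to computation is consistent with the paper, but the $p\ge 5$ case is where the work lies.
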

\begin{proof}
From Proposition \ref{prop:smallquotients} we see that $G/N$ has order $p^4$, and hence $|N|=p$. The groups of order $p^4$ are well known; a convenient reference for them is \cite{Lemieux}, who tabulates them with their minimal degrees. For the sake of brevity, we shall not here refer explicitly to individual groups.  

Let $Q=G/N$. Then Lemma \ref{lem:criteria} tells us that $Q$ is not elementary abelian, and contains no maximal cyclic subgroup. Since $G$ is non-abelian, we see that $\mu(G)\ge p^2$, and so we must have $\mu(Q)>p^2$. These considerations suffice to rule out all but six of the groups of order $p^4$ for odd $p$, aside from the four groups listed in the proposition. 

These six possibilities for $Q$ each have the property that $\mu(Q)\le 2p^2$. We must therefore have $\mu(G)<2p^2$, and it follows that $\mu(G)=p^2+cp$ for some $c<p$. Hence $G\cong H\times B$, where $H$ is a non-abelian permutation group acting on $p^2$ points and $B$ is elementary abelian, possibly trivial. The action of $H$ is a minimal degree representation, and it is transitive. So Theorem \ref{thm:johnson} tells us that $H$ has a unique central subgroup $N$, which clearly has order $p$. But now it is clear that $N$ must be the distinguished subgroup of $G$. This implies that $H$ is exceptional, and so $H=G$. 

Now we see that $G$ is a subgroup of $C_p \wr C_p$, and so $G$ has an elementary abelian maximal subgroup $V$. If $g\in G\setminus V$ then $g$ acts indecomposably on $V$, and its Jordan form comprises a single unipotent block of size $4$. It then follows that the quotient $G/N$ is isomorphic to an extension of a space of dimension $3$ by a linear map of order~$3$ acting indecomposably. In particular, the centre $Z(G/N)$ is cyclic, and so a minimal permutation representation of $G/N$ is transitive by Theorem \ref{thm:johnson}. But this is not the case for any of the six possible groups $Q$ under consideration, and so $G/N\not\cong Q$, which is a contradiction.
\end{proof}

\subsection{Commutators and $p$-th powers}

Recall the well known commutator identity
\begin{equation}\label{commid}
[x,yz] = [x,z][x,y]^z.
\end{equation}
In particular, if $G$ is nilpotent of class $2$, then commutators are central, and so $[x,yz]=[x,y][x,z]$.

The following fact is of great importance in what follows.

\begin{proposition}\label{prop:ppower}
Let $p>3$, and let $G$ be a $p$-group of nilpotency class at most $3$, such that $G'$ has exponent at most $p^2$. Then the map $g\mapsto g^p$ is an endomorphism of~$G$.
\end{proposition}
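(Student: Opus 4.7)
The natural tool is the Hall--Petrescu collection formula: for any elements $x,y$ of any group and any positive integer $n$,
\[
(xy)^n = x^n y^n \prod_{k=2}^{n} c_k(x,y)^{\binom{n}{k}},
\]
with $c_k(x,y) \in \gamma_k(\langle x, y\rangle)$. My plan is to apply this with $n = p$ and use the class hypothesis together with the divisibility of binomial coefficients to kill the correction terms.

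First, since $G$ has class at most $3$, the factors $c_k(x,y)$ for $k \geq 4$ are trivial, so the formula collapses to
\[
(xy)^p = x^p y^p \cdot c_2(x,y)^{\binom{p}{2}} \cdot c_3(x,y)^{\binom{p}{3}},
\]
with $c_2 \in G'$ and $c_3 \in \gamma_3(G) \subseteq G'$. The key arithmetic is that for $p > 3$, both $\binom{p}{2} = p(p-1)/2$ and $\binom{p}{3} = p(p-1)(p-2)/6$ are divisible by $p$; in fact the hypothesis $p>3$ is precisely what is needed to make $6$ coprime to $p$ and so guarantee the second divisibility. Hence each correction factor $c_k(x,y)^{\binom{p}{k}}$ lies in the subgroup $(G')^p$, giving $(xy)^p \equiv x^p y^p \pmod{(G')^p}$.

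To conclude outright that $(xy)^p = x^p y^p$, I would invoke the exponent hypothesis on $G'$ together with the structure of $G'$, which is abelian because $[G',G'] \subseteq \gamma_4(G) = 1$. Using the explicit form of the Hall--Petrescu words --- $c_2 = [y,x]$ and $c_3$ as a $\mathbb{Z}$-linear combination of $[[y,x],x]$ and $[[y,x],y]$ --- together with identities such as $[y^p, x] \equiv [y,x]^p$ modulo $\gamma_3(G)$ that hold in low class, one shows that the product of corrections in fact lies one further power of $p$ deeper, inside $(G')^{p^2} = 1$.

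The main obstacle I expect is precisely this final bridge from $(G')^p$ down to $(G')^{p^2}$: naive Hall--Petrescu only yields membership in $(G')^p$, and the refinement to genuine triviality requires exploiting the explicit commutator-word form of the $c_k$ and the identities they satisfy in a class-$3$ group, rather than just the crude memberships $c_2, c_3 \in G'$.
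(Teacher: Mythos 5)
Your first step is sound and is essentially the route the paper itself takes: in class at most $3$ the collection formula leaves only the $c_2$ and $c_3$ terms, and for $p>3$ the exponents $\binom{p}{2}$ and $\binom{p}{3}$ are divisible by $p$, giving $(xy)^p\equiv x^py^p \pmod{(G')^p}$. But the ``final bridge'' that you flag as the main obstacle cannot be built: $\binom{p}{2}=p(p-1)/2$ and $\binom{p}{3}=p(p-1)(p-2)/6$ are divisible by $p$ only once, and no manipulation of the explicit Hall--Petrescu words will push the correction term into $(G')^{p^2}$, because the statement with the hypothesis ``exponent at most $p^2$'' is false. For example, let $p>3$ and let $G=\langle a,b,c \mid a^{p^2}=b^{p^2}=c^{p^2}=1,\ [a,b]=c,\ [a,c]=[b,c]=1\rangle$, a group of class $2$ (hence class at most $3$) with $G'=\langle c\rangle$ cyclic of order $p^2$. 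Then $(ab)^p=a^pb^p[b,a]^{\binom{p}{2}}=a^pb^pc^{-p(p-1)/2}\neq a^pb^p$, so the $p$-th power map is not an endomorphism even though every hypothesis of the proposition holds. (Your proposed identity $[y^p,x]\equiv[y,x]^p \bmod \gamma_3(G)$ is true but does not help; the obstruction is precisely a simple commutator of order $p^2$.)

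The hypothesis that makes the argument work is that $G'$ has exponent $p$; under that assumption your first paragraph already finishes the proof, since each correction factor is a power of a $p$-th power of an element of $G'$. This is also how the proposition is actually used in the paper: the central extensions of $Q(p)$ have class $2$ with derived group of exponent $p$, and the candidate extensions of $Q_1(p)$ and $Q_\alpha(p)$, after the reduction to $h=i=\ell=0$, likewise have derived group of exponent $p$, so nothing downstream is affected. You should be aware that the paper's own proof makes exactly the leap you were trying to justify: it writes down the class-$3$ collection identity with binomial exponents and then asserts that exponent dividing $p^2$ suffices, which is a non sequitur for the reason above. So your instinct that this is the delicate point was correct; the resolution is to strengthen the hypothesis on $G'$ (or verify exponent $p$ in each application), not to refine the collection argument.
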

\begin{proof} Since commutators of weight $2$ are central in $G$, a straightforward induction establishes the following identity:
\[
(gh)^a = g^ah^a[h,g]^{a \choose 2}[[h,g],g]^{a \choose 3}[[h,g],h]^{{a \choose 3} + {a \choose 2}}.
\]
Now since $G'$ has exponent dividing $p^2$, it follows for $p>3$ that $(gh)^p=g^ph^p$.
\end{proof}

\subsection{Use of computation and the case $p=3$}\label{s:computation}
The classification of exceptional groups for $p=3$ stands alone since it does not comply with the general classification for $p\geq 5$.  Since groups of order $3^5$ are so readily dealt with computationally, we do not give full proofs of the results here. The classification of exceptional extensions of the groups $Q(p)$, $Q_1(p)$ and $Q_\alpha(p)$ for $p=3$ are presented in Table \ref{table:results}. We remark here that the groups $Q_{1}(3)$ and $Q_{81}$ possess a common exceptional extension, the group which we have called $G_4$. Also, $Q_{1}(3)$ and $Q_{\alpha}(3)$ possess a common exceptional extension, which we have called $G_6$. These groups appear with distinct presentations in Table \ref{table:results}, reflecting the particular quotient maps, but we have used a consistent nomenclature.

For the rest of the paper, we assume $p>3$. Our arguments depend no further on computation; however we have verified our classification of exceptional groups computationally for all primes $p$ up to $19$. 
For calculating minimal degrees we have used the procedure described in \cite{EliasSilbermanTakloo}.   
Our computations have been performed using the GAP \cite{GAP} and Magma \cite{Magma} computer algebra programs, and have made use of the library of small
$p$-groups in the Small Groups Database of Besche, Eick and O'Brien. 

\section{Extensions of $Q(p)$}\label{s:Q}

\subsection{First reduction}

Let $p$ be an odd prime, and let $Q$ be $Q(p)$ of order $p^4$, given by
\[
Q=\langle x,y,z \mid x^{p^2}=y^p=z^p= [x,y] = [x,z] = 1, [y,z]=x^p\rangle.
\]
A central extension of $Q$ by a group of order $p$ has the form

\begin{align}\label{eq:G16extension}
G&=\langle x,y,z,n \mid n^p=1, x^{p^2}=n^h, y^p=n^i, z^p=n^j,\nonumber \\ 
&\quad \quad \textrm{$n$ central}, [y,z] = x^pn^k, [x,z]=n^\l, [x,y]=n^m\rangle
\end{align}
for integers $i,j,k,\l,m\in\{0,\dots, p-1\}$. The generators of $G$ in this presentation have been so labelled that the images of $x,y,z$, in the quotient of $G$ by its central subgroup~$\langle n\rangle$, correspond to the generators $x,y,z$ of $Q$. We note that it is immediately clear from the presentation that $G$ has nilpotency class $2$, and that its derived group has exponent $p$. So Proposition \ref{prop:ppower} tells us that the $p$-power map is an endomorphism of~$G$.

It is clear that a subgroup $H$ of $Q$ not containing the socle $\langle x^p\rangle$ has order at most~$p$, from which it is clear that $\mu(Q)=p^3$. Suppose that $\mu(G)<\mu(Q)$; then it is clear that $G$ can have no element of order $p^3$. In particular,
the generator $x$ has order $p^2$, and so we must have $h=0$. In fact we may also assume that $m=0$, and our next step is to justify this assertion.
\begin{proposition}\label{prop:parameterelimination}
Let $G(i,j,k,\l,m)$ denote the group with the presentation (\ref{eq:G16extension}) above, with $h=0$. Then
for all $i,j,k,\l,m$ there exist $i',j',\l'$ such that $G(i,j,k,\l,m)\cong G(i',j',k,\l',0)$.
\end{proposition}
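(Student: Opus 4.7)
The strategy is to change generators inside $G=G(i,j,k,\l,m)$ so that the new presentation takes the form displayed in~(\ref{eq:G16extension}) with $h=m=0$, while leaving the parameter $k$ (which records the $x^p$-coefficient in $[y,z]$) untouched. Two families of substitutions suffice, and their analysis relies on two facts already established: since $G$ is of class $2$ with elementary abelian derived group, $[x,uv]=[x,u][x,v]$ (a special case of~(\ref{commid})); and by Proposition~\ref{prop:ppower}, the map $g\mapsto g^p$ is an endomorphism of $G$, so $(uv)^p=u^pv^p$ for all $u,v\in G$.

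First I would consider the substitution $(X,Y,Z,N):=(x,yz^a,z,n)$. These elements clearly still generate $G$, and a direct computation gives
\[
Y^p = y^pz^{ap}=n^{i+aj},\quad [X,Y]=[x,y][x,z]^a=n^{m+a\l},\quad [Y,Z]=[y,z]=x^pn^k=X^pN^k,
\]
while $X^{p^2}$, $Z^p$ and $[X,Z]$ are unchanged. Hence
\[
G(i,j,k,\l,m)\;\cong\;G(i+aj,\,j,\,k,\,\l,\,m+a\l).
\]
When $\l\not\equiv0\pmod p$ we can take $a\equiv-m\l^{-1}\pmod p$, which eliminates the final parameter and proves the proposition in this case (with $i'=i-m\l^{-1}j$, $j'=j$, $\l'=\l$).

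The remaining case is $\l\equiv0$, in which the above substitution leaves $m$ unchanged. Here one first applies a symmetric substitution $(X,Y,Z,N):=(x,y,zy^b,n)$, for which the analogous computation yields
\[
Z^p = n^{j+bi},\quad [X,Z]=[x,z][x,y]^b=n^{\l+bm},\quad [Y,Z]=[y,z]=X^pN^k,
\]
so that $G(i,j,k,0,m)\cong G(i,j+bi,k,bm,m)$. If $m\equiv0$ there is nothing to prove; otherwise pick any $b\not\equiv0\pmod p$, so that the new parameter $\l'=bm$ is nonzero, and then apply the first substitution to kill $m$.

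The main obstacle is really bookkeeping: one must verify that each substitution really is a valid change of generators (which is obvious since the map is invertible on the free generating set) and that the relations transform as claimed. The delicacy lies in making sure that both transformations genuinely preserve $k$ — this is why I chose substitutions that only modify $y$ by powers of $z$, or $z$ by powers of $y$, since either choice leaves the commutator $[y,z]$ (and in particular its $x^p$-contribution) invariant. Everything else is a routine application of the class-$2$ commutator identity and Proposition~\ref{prop:ppower}.
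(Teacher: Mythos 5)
Your argument is correct: each substitution is a genuine change of generators, the class-$2$ commutator identity and Proposition~\ref{prop:ppower} justify the displayed computations, and in both cases the parameter $k$ is visibly untouched because the commutator $[y,z]$ itself is unchanged. The route differs from the paper's in execution rather than in substance. The paper works inside $X=\langle y,z,n\rangle$, passes to $V=X/Z(X)$, and treats $v\mapsto [x,v]$ as a linear functional and $(u,v)\mapsto[u,v]$ as a non-degenerate alternating form on $V$; choosing $u$ in the kernel of the functional and then $v$ with $[u,v]=x^pn^k$ produces the new generators in one stroke, with no case distinction. Your version replaces this coordinate-free choice by the explicit generator maps $y\mapsto yz^a$ and $z\mapsto zy^b$ (very much in the spirit of the maps later tabulated in Table~\ref{table:generatormaps}), at the cost of splitting into $\ell\neq 0$ and $\ell=0$ and composing two substitutions when $\ell=0\neq m$; what you gain is that the resulting parameters $(i',j',\ell')$ are written down explicitly. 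One small remark: to conclude an isomorphism of the abstractly presented groups, rather than merely that the new generators of $G$ satisfy the new relations, one should note that the substitution is invertible, so the same computation run backwards yields a surjection in the opposite direction and finiteness finishes the job; you gesture at this, and it is at the same level of detail as the paper's own ``straightforward to check''.
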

\begin{proof}
Let $X=\langle y,z,n\rangle$. Then $X$ has order $p^4$, and $Z(X)=\langle n,x^p\rangle$ has order $p^2$. Clearly the quotient $V=X/Z(X)$ is elementary abelian of order $p^2$. The derived subgroup $X'$ is equal to the cyclic group $\langle x^pn^k \rangle$.

There is a well defined map $\phi:V\longrightarrow \langle n\rangle$ given by
\[
\phi(v+Z(X)) = [x,v],\quad v \in X.
\]
There is also a map $\theta: V\times V\longrightarrow X'$ defined by
\[
\theta(u+Z(X),v+Z(X)) = [u,v].
\]
We may consider $V$ as a vector space, and the cyclic groups $\langle n\rangle$ and $X'$ as copies of the field $\F_p$. It is now apparent, by (\ref{commid}), that $\phi$ is a linear functional, and $\theta$ a non-degenerate alternating form, on the space $V$. If we pick $u$ to be a non-zero
element of the kernel of $\phi$, then there exists $v\in V$ such that $\theta(u,v)=x^pn^k$. Now we may construct a new presentation for the group $G$,
on generators $x,y',z',n$, where $y'+Z(X) = u$ and $z'+Z(X)=v$. It is straightforward to check that this presentation gives the isomorphism claimed, for some values of $i',j',\l'$.
\end{proof}

In the light of Proposition \ref{prop:parameterelimination}, we may drop the parameter $m$ from the notation introduced there, and write
$G(i,j,k,\l)$ for the group with the presentation (\ref{eq:G16extension}) with $h=m=0$. We call such a presentation a \emph{distinguished presentation} with parameters $(i,j,k,\l)$. The group affording such a presentation is a \emph{candidate extension} of $Q$; these include all of the exceptional extensions of $Q$.

\subsection{Structure of a candidate extension of $Q$}

We record here some observations on the structure of a candidate extension which will be needed later. Throughout this section $G$ is a candidate extension affording a distinguished presentation with generators $x,y,z,n$ and parameters $(i,j,k,\l)$.

\begin{proposition} $G$ has order $p^5$ (that is to say, it is a proper central extension of~$Q$).
\end{proposition}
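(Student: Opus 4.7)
The plan is a two-sided estimate: the upper bound $|G| \le p^5$ follows from a normal-form argument, and the lower bound $|G| \ge p^5$ from an explicit realization showing $n \ne 1$ in $G$.

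For the upper bound, I would observe that $n$ is central and that each commutator $[x,y]=1$, $[x,z]=n^\ell$, $[y,z]=x^p n^k$ lies in the central subgroup $\langle x^p, n\rangle$. Consequently, any word in the four generators can be collected into the form $x^a y^b z^c n^d$ by repeated transposition. The power relations $x^{p^2}=1$, $y^p = n^i$, $z^p = n^j$, $n^p = 1$ then restrict $a\in[0,p^2)$ and $b,c,d\in[0,p)$, so $|G|\le p\cdot p\cdot p\cdot p^2 = p^5$.

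For the lower bound, the crucial remark is that quotienting by $n$ turns the defining presentation of $G$ into the defining presentation of $Q$, so $G/\langle n\rangle\cong Q$ and $|G| = p^4\cdot|\langle n\rangle|$; it suffices therefore to show $n \ne 1$ in $G$, equivalently, to exhibit a group of order $p^5$ satisfying the defining relations in which $n$ has nontrivial image. The cleanest route is to reinterpret the given relations as a power-commutator presentation on the five generators $x$, $y$, $z$, $x^p$, $n$, and verify the Hall-collection consistency conditions (the Jacobi-type identities on triples of generators, and the compatibility of $p$-th powers with commutators). Once consistency is established, the presentation defines a group of order $p^5$, as required. An equivalent and more concrete alternative is to construct $\widehat G$ directly as a central extension $1\to C_p\to\widehat G\to Q\to 1$ classified by a $2$-cocycle $f\colon Q\times Q\to C_p$ whose cohomology class encodes the parameters $(i,j,k,\ell)$.

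The main obstacle is the consistency (or cocycle) verification, which would otherwise need to be carried out uniformly in all four parameters. In the present situation, however, this check is drastically simplified: $G$ has nilpotency class~$2$ with elementary-abelian derived subgroup, so all potential Jacobi obstructions collapse to trivial identities; and Proposition~\ref{prop:ppower} applies directly, so the $p$-th power map is a homomorphism, making the compatibility of the power relations with the commutator structure automatic. What remains is a routine finite collection of identities, valid for every choice of $(i,j,k,\ell)\in\{0,\ldots,p-1\}^4$, and the lower bound $|G|\ge p^5$ follows.
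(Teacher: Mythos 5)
Your two-sided strategy is sound in outline, and your lower-bound route is genuinely different from the paper's. The paper does not verify consistency of the presentation of $G$ itself: it builds an explicit auxiliary group $K$ of order $p^5$ as a semidirect product of an abelian group of order $p^3$ by a cyclic group of order $p^2$, observes that the quotient $K/\langle z^pn^{-j}\rangle$ of order $p^4$ affords the same presentation as $G/\langle x^p\rangle$, and then uses the fact that $x^p\neq 1$ in $G$ (because its image in $Q$ is nontrivial) to conclude $|G|=p\cdot p^4$. Your plan of showing instead that $n\neq 1$, by checking consistency of a power-commutator presentation or by exhibiting a suitable $2$-cocycle, would work equally well if carried out; the upper-bound collection argument is fine.

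The gap is in your claim that the consistency check is essentially automatic. Proposition \ref{prop:ppower}, and the observations that $G$ has class $2$ with central elementary abelian derived subgroup, are statements about whatever group the presentation happens to define, and they remain true if the presentation collapses: if $n=1$ the relations reduce exactly to those of $Q$, which also has class $2$ and a $p$-power endomorphism. So no structural property of $G$ can by itself rule out the collapse $G\cong Q$, which is precisely what the proposition must exclude. The Hall consistency conditions are conditions on the rewriting rules, not identities holding in $G$, and they have to be verified by explicit collection --- for instance, iterating $y^z=yx^pn^k$ gives $y^{z^p}=yx^{p^2}n^{pk}=y$, which must be compared with the value $y$ obtained from $z^p=n^j$ being central. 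These verifications do all succeed here, for every $(i,j,k,\ell)$, so your plan is rescuable; but as written the decisive step is supported by a circular appeal to properties of $G$ rather than by the finite computation it actually requires.
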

\begin{proof} It is clear that $|G|$ is either $p^4$ or $p^5$. Consider the group $K$ with presentation
\begin{align*}
K &= \langle x,y,z,n \mid x^p=n^p=[x,y]=[x,n]=[y,n]=[z,n]=1,\\
& \quad \quad y^p=n^i, z^{p^2}=1, [y,z]=n^k, [x,z]=n^\l\rangle.
\end{align*}
The subgroup $L$ of $K$ generated by its elements $x,y,n$ is abelian of order $p^3$. It is easy to see that there is an automorphism of $L$ of order $p$
such that $x\mapsto xn^\l$, $y\mapsto yn^k$, $n\mapsto n$; this holds regardless of whether $y$ has order $p$ or $p^2$.
So $K$ is a semidirect product of $L$ by the group $\langle z\rangle$ of order $p^2$. The elements $n$ and $z^p$ are central in $K$. But we observe that the quotient $\overline{K}$ of $K$ by $\langle z^pn^{-j} \rangle$ has a presentation obtained from the presentation for $K$ simply by adding the relation $z^pn^{-j}=1$, and it can be checked that this is also a presentation for the quotient of $G$ by its subgroup $\langle x^p\rangle$. Now $x^p$ has order $p$ in $G$, since its image in $Q$ has order $p$, and it follows easily that $|G|=p|\overline{K}|=|K|=p^2|L|=p^5$ as required.
\end{proof}

\begin{proposition}\label{prop:centre}
If $\l\ne 0$ the the centre $Z(G)$ is elementary abelian of size $p^2$, and is equal to $\langle x^p,n\rangle$. If $\l=0$ then $Z(G)=\langle x,n\rangle$, which is isomorphic to $C_p\times C_{p^2}$.
\end{proposition}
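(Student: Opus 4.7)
The plan is to bound $Z(G)$ both from below (by direct verification that $x^p$ and $n$ are central) and from above (by pulling back $Z(Q)$ via the quotient map $\pi\colon G\to Q$), and then to split on $\ell$.

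First I would record that $G$ has nilpotency class $2$: the defining commutators $[x,y]$, $[x,z]$, $[y,z]$ are each a product of powers of $x^p$ and $n$, so $G'\le \langle x^p,n\rangle$, and it will follow that $G'$ is central. Since commutators are central, the commutator pairing is bilinear, and in particular $[a,b^p]=[a,b]^p$ for all $a,b\in G$. This gives $[x^p,y]=[x,y]^p=1$ and $[x^p,z]=[x,z]^p=n^{\ell p}=1$; together with the centrality of $n$ this shows $\langle x^p,n\rangle\le Z(G)$ regardless of the parameters. A short check confirms that $\langle x^p,n\rangle$ is elementary abelian of order $p^2$: both generators have order $p$ and they commute; and $\langle x^p\rangle\cap\langle n\rangle=1$ because $\pi(x^p)=\bar x^p$ is nontrivial in $Q$ while $\pi(n)=1$.

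Next I would bound $Z(G)$ from above. The image $\pi(Z(G))$ lies in $Z(Q)$. A direct computation in $Q$ shows $Z(Q)=\langle \bar x\rangle\cong C_{p^2}$: writing an element as $\bar x^a\bar y^b\bar z^c$ and bracketing with $\bar y$ and $\bar z$ forces $b\equiv c\equiv 0\pmod p$ via the relation $[\bar y,\bar z]=\bar x^p$. Hence $Z(G)\le\pi^{-1}(\langle\bar x\rangle)=\langle x,n\rangle$, a group of order $p^3$.

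Finally I would split cases. If $\ell=0$, the relations $[x,y]=1$ and $[x,z]=n^{\ell}=1$ imply $x$ is central, so $\langle x,n\rangle\le Z(G)$; combined with the upper bound this gives equality, and the same intersection argument as above shows $\langle x,n\rangle\cong C_{p^2}\times C_p$. If $\ell\neq 0$, a general element $x^an^d\in \langle x,n\rangle$ satisfies $[x^an^d,z]=[x,z]^a=n^{\ell a}$, which is trivial iff $p\mid a$; hence $Z(G)\le\langle x^p,n\rangle$, and together with the first paragraph we get equality.

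The only mildly delicate point is the upper bound $\pi(Z(G))\le Z(Q)$ (immediate from $\langle n\rangle\le Z(G)$) combined with the correct identification of $Z(Q)$; the rest is routine commutator calculus using class $2$, so I do not expect a real obstacle.
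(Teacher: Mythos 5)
Your proposal is correct, and its skeleton matches the paper's: both establish $\langle x^p,n\rangle\le Z(G)$ directly, both then bound $Z(G)$ above by $\langle x,n\rangle$, and both finish by splitting on $\ell$ (noting $x$ is central if and only if $\ell=0$). The one genuine difference is how the upper bound is obtained. The paper argues inside $G$: it takes $\{x^ay^bz^c\}$ as coset representatives of $\langle x^p,n\rangle$ and checks that such an element commutes with both $y$ and $z$ only when $b=c=0$. You instead push $Z(G)$ into the quotient, using $\pi(Z(G))\le Z(Q)$ and the computation $Z(Q)=\langle \bar x\rangle\cong C_{p^2}$, then pull back to get $Z(G)\le\langle x,n\rangle$. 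These are really the same commutator calculation performed in $Q$ rather than in $G$; your version has the mild advantage of not needing a normal form (equivalently the order count) for $G$, since the structure of $Q$ is given, while the paper's is more self-contained within $G$. Two small remarks: the containment $\pi(Z(G))\le Z(Q)$ follows from surjectivity of $\pi$ alone, not from $\langle n\rangle\le Z(G)$ as your parenthesis suggests; and your appeal to class $2$ before it is fully established is harmless here, since the two instances you use, $[x^p,y]=1$ and $[x^p,z]=[x,z]^p=n^{\ell p}=1$, need only $[x,y]=1$ and the centrality of $n$, both explicit in the presentation.
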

\begin{proof}
It is clear that $A=\langle x^p,n\rangle$ is a central elementary abelian subgroup of order $p^2$. A set of representatives for the cosets of $A$ in $G$ is
$\{x^ay^bz^c \mid a,b,c\in \{0,\dots,p-1\}\}$. Now the element $x^ay^bz^c$ cannot commute with both $y$ and $z$ unless $b=c=0$, and so $Z(G)$ is contained in $\langle x,n\rangle$. The result follows from the facts that $x$ has order $p^2$, and that it is central if and only if $\l=0$.
\end{proof}

\begin{proposition}\label{prop:abeliansubgroup}
The subgroup $H=\langle x,y,n\rangle$ of $G$ is an abelian subgroup of order~$p^4$. If $\l\ne 0$ then $H$ is the unique abelian subgroup of $G$ of order $p^4$. If $i=0$ then $H\cong C_{p^2}\times C_p\times C_p$; otherwise $H\cong C_{p^2}\times C_{p^2}$.
\end{proposition}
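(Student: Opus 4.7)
The plan is to read off the first two assertions from the normal form for elements of $G$ implicit in the proof that $|G|=p^5$, and then to handle the uniqueness statement by passing to $G/Z(G)$ and using the commutator pairing.

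Since $[x,y]=1$ (because $m=0$) and $n$ is central, the three generators of $H$ commute pairwise, so $H$ is abelian. In the normal form $x^ay^bz^cn^d$ used to establish $|G|=p^5$, the subgroup $H$ consists of the elements with $c=0$, so $|H|=p^4$. For the isomorphism type, when $i=0$ the generator $y$ has order $p$, so $H$ is an abelian quotient of $C_{p^2}\times C_p\times C_p$ of equal order, and hence isomorphic to it. When $i\neq 0$, the relation $y^p=n^i$ shows that $y$ has order $p^2$ and that $n\in\langle y\rangle$, so $H=\langle x,y\rangle$ is generated by two commuting elements of order $p^2$; since $|H|=p^4$, this forces $H\cong C_{p^2}\times C_{p^2}$.

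For uniqueness when $\ell\neq 0$, let $K$ be any abelian subgroup of $G$ of order $p^4$. I claim first that $Z(G)\subseteq K$: otherwise $KZ(G)=G$, and since $K$ is abelian and $Z(G)$ is central this would force $G$ to be abelian, contradicting $[y,z]=x^pn^k\neq 1$. By Proposition~\ref{prop:centre}, $Z(G)=\langle x^p,n\rangle$, and inspection of the defining commutators shows that $G'$ is generated by $n^\ell$ and $x^pn^k$, which together span $\langle x^p,n\rangle$ precisely because $\ell\neq 0$; thus $G'=Z(G)$. The commutator therefore induces a well-defined alternating bilinear form $B\colon V\times V\to Z(G)$ on the three-dimensional $\F_p$-space $V:=G/Z(G)$, and abelian subgroups of order $p^4$ containing $Z(G)$ correspond bijectively to two-dimensional subspaces $W\subseteq V$ on which $B$ vanishes.

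Writing $B$ in the basis $\overline{x},\overline{y},\overline{z}$ gives $B(\overline{x},\overline{y})=0$, $B(\overline{x},\overline{z})=n^\ell$, and $B(\overline{y},\overline{z})=x^pn^k$; because $\ell\neq 0$, the latter two values are linearly independent in $Z(G)\cong\F_p^2$. A short case analysis on whether $W$ meets the line $\langle\overline{z}\rangle$ non-trivially then shows that the only totally isotropic two-dimensional subspace is $\langle\overline{x},\overline{y}\rangle=H/Z(G)$, forcing $K=H$. I expect the main obstacle to be the setup for the uniqueness argument—verifying both $Z(G)\subseteq K$ and that $G'=Z(G)$ under the hypothesis $\ell\neq 0$—after which the subspace count reduces to a routine linear algebra exercise.
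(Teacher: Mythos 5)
Your proof is correct, and for the uniqueness assertion it takes a genuinely different route from the paper. The paper argues via centralizers: for any $w\in G\setminus H$ it computes that $|\Cent_G(w)\cap H|=p^2$ when $\l\ne 0$, hence $|\Cent_G(w)|\le p^3$, so no element outside $H$ can lie in an abelian subgroup of order $p^4$. You instead first show that any abelian subgroup $K$ of order $p^4$ must contain $Z(G)$ (the $KZ(G)=G$ argument is sound, since $|K\cap Z(G)|\le p$ forces $|KZ(G)|\ge p^5$), verify $G'=Z(G)=\langle x^p,n\rangle$ when $\l\ne 0$, and then classify the totally isotropic planes of the induced alternating form $B$ on the three-dimensional space $G/Z(G)$. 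Both arguments ultimately rest on the same commutator data --- that $[x,z]=n^{\l}$ and $[y,z]=x^pn^k$ are linearly independent in $\langle x^p,n\rangle$ precisely when $\l\ne 0$ --- but yours packages it as a linear-algebra classification, in the spirit of the form $\theta$ used in Proposition~\ref{prop:parameterelimination}, at the cost of the extra preliminary steps ($Z(G)\le K$, $G'=Z(G)$, and the elementary abelianness of $G/Z(G)$, all of which you handle or which follow immediately). The paper's version is shorter but leaves the centralizer computation as ``easy to check''; yours is longer to set up but reduces the final step to counting isotropic subspaces, which is arguably more transparent. One cosmetic remark: the paper's proof that $|G|=p^5$ does not literally exhibit the normal form $x^ay^bz^cn^d$ you invoke for $|H|=p^4$, but that normal form is an immediate consequence of the relations together with $|G|=p^5$, so nothing is lost. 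The remaining parts (abelianness, and the isomorphism type according to whether $i=0$) match the paper's argument essentially verbatim.
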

\begin{proof}
It is obvious that $H$ has order $p^4$ and that it is abelian. Let $w\in G\setminus H$. Then $w\in Hz^c$ for some $c\in\{1,\dots,p-1\}$, and it is easy to check that if $\l\ne 0$ then $w$ centralizes no element of $H$ outside $\langle x,n\rangle$. Since $|\Cent_G(w)\cap H|=p^2$ we must have $|\Cent_G(w)|\le p^3$, and so $w$ lies in no abelian subgroup of order $p^4$; it follows that $H$ is the only such subgroup. If $i\ne 0$ then $H=\langle x,y\rangle$ and so
$H\cong C_{p^2}\times C_{p^2}$. But if $i=0$ then $\langle x^p,y,n\rangle$ is elementary abelian of order $p^3$, and so $H\cong C_{p^2}\times C_p\times C_p$.
\end{proof}

\begin{proposition}\label{prop:powersubgroup}
If $i=j=0$ then the subgroup of $p$-th powers in $G$ is $\langle x^p\rangle$ of order~$p$; otherwise it is $\langle x^p,n\rangle$ of order $p^2$.
\end{proposition}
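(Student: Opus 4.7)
The proof plan rests on the fact already noted in the setup that $G$ has nilpotency class $2$ with $G'$ of exponent $p$, so Proposition \ref{prop:ppower} applies and the map $\phi \colon g \mapsto g^p$ is an endomorphism of $G$. Consequently the subgroup $G^p := \{g^p : g \in G\}$ is simply the image of $\phi$, which is generated by the $p$-th powers of any generating set for $G$. Taking the generators $x,y,z,n$, the defining relations tell us immediately that
\[
G^p = \langle x^p,\ y^p,\ z^p,\ n^p \rangle = \langle x^p,\ n^i,\ n^j,\ 1 \rangle = \langle x^p,\ n^i,\ n^j \rangle.
\]

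Next I would pin down the orders. The generator $x$ has order exactly $p^2$ (its image in $Q$ has order $p^2$ and the relation $x^{p^2}=1$ holds in $G$ since $h=0$), so $x^p$ is a nontrivial element of order $p$. Moreover $x^p$ is not contained in $\langle n\rangle$, since $n$ lies in the kernel of the canonical map $G\to Q$ while $x^p$ maps to a nontrivial element of $Q$. Hence $\langle x^p\rangle$ and $\langle n\rangle$ intersect trivially, and $\langle x^p, n\rangle$ is elementary abelian of order $p^2$.

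The case split is then immediate. If $i=j=0$, the displayed formula collapses to $G^p=\langle x^p\rangle$, which has order $p$. Otherwise at least one of $i,j$ is a nonzero element of $\{0,\dots,p-1\}$, hence a unit modulo $p$, so $\langle n^i, n^j\rangle = \langle n\rangle$, and therefore $G^p = \langle x^p, n\rangle$, of order $p^2$.

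There is no real obstacle here: the entire argument is a one-line application of the endomorphism property combined with a tiny bookkeeping check that $x^p\notin\langle n\rangle$. The only thing one must take care over is justifying the use of Proposition \ref{prop:ppower}, which requires the class-$2$ and exponent-$p$ conditions on $G$ and $G'$; both were recorded immediately after presentation (\ref{eq:G16extension}) and so may be cited directly.
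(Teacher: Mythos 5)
Your proof is correct and is essentially the argument the paper leaves implicit: the paper's proof is simply ``obvious from the presentation,'' which amounts to exactly your observation that the $p$-power map is an endomorphism (by Proposition \ref{prop:ppower}, as recorded after presentation (\ref{eq:G16extension})), so the subgroup of $p$-th powers is generated by $x^p, y^p=n^i, z^p=n^j$, plus the check that $x^p\notin\langle n\rangle$. No gaps; your write-up just supplies the bookkeeping the paper omits.
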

\begin{proof}
This is obvious from the presentation for $G$.
\end{proof}

\begin{proposition} \label{prop:split}
If $G$ decomposes as a non-trivial direct product then one of the following statements holds.
\begin{enumerate}
\item $(i,j,k,\l)=(0,0,0,0)$. In this case $G\cong Q \times C_p$.
\item $\l=0\ne k$. In this case $G\cong C_{p^2}\times E$, where $E$ is an extraspecial group of order $p^3$. The group $E$ has exponent $p$ if and only $i=j=0$.
\end{enumerate}
\end{proposition}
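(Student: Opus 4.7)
The plan is to split the analysis into cases according to the parameters $\ell$ and $k$. The first step handles $\ell\neq 0$ by a commutator-subgroup count; the second handles $\ell=0$ by looking separately at $k=0$ (with a further split on $(i,j)$) and $k\neq 0$, constructing the direct product decomposition explicitly in the last case.

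Suppose first that $\ell\neq 0$, so that $G'=\langle x^pn^k, n^\ell\rangle=\langle x^p,n\rangle$ has order $p^2$. If $G=A\times B$ is a non-trivial decomposition, the possibility $|B|=p^2$ is ruled out by comparing $|Z(G)|=p^2$ (Proposition~\ref{prop:centre}) against the order $p^3$ of $Z(A)\times B$ that would arise with $A$ extraspecial of order $p^3$ and $B$ abelian of order $p^2$. Hence $|B|=p$ and $A$ is non-abelian of order $p^4$; moreover $|G/\Phi(G)|=p^3$ forces $A$ to be $2$-generated. Being a subgroup of the class-$2$ group $G$, $A$ has class at most $2$, so $A'$ is cyclic (generated by the commutator of a generating pair). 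A short argument in groups of order $p^4$ then shows $|A'|=p$, whence $|G'|=|A'|=p\neq p^2$, a contradiction. So $G$ is directly indecomposable in this case.

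Now take $\ell=0$, so $G'=\langle x^pn^k\rangle$ has order $p$ and $Z(G)=\langle x,n\rangle\cong C_{p^2}\times C_p$. When $k=i=j=0$, the generator $n$ appears in no relation except $n^p=1$, so $\langle n\rangle$ splits off with complement $\langle x,y,z\rangle\cong Q$, giving conclusion (1). When $k=0$ but $(i,j)\neq(0,0)$, I observe that $\Phi(G)=\langle x^p,n\rangle=\Omega_1(Z(G))$: every central element of order $p$ lies in $\Phi(G)$, so no cyclic direct factor of order $p$ exists; and for any central $v=x^{a'}n^{b'}$ of order $p^2$ (with $a'\not\equiv 0\pmod p$), the relation $x^{a'}=n^{-b'}$ modulo $\langle v\rangle$ forces $x^p=1$ and hence $[y,z]=x^pn^k=1$ in $G/\langle v\rangle$, ruling out any non-abelian complement of order $p^3$. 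So no decomposition occurs in this sub-case.

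The remaining case is $\ell=0$ and $k\neq 0$, where I construct $G=\langle x\rangle\times E$ with $E$ extraspecial of order $p^3$. When $i=j=0$, I take $E=\langle y,z\rangle$: the class-$2$ power formula (Proposition~\ref{prop:ppower}) gives $y^p=z^p=1$, while $[y,z]=x^pn^k$ is a non-trivial central element of order $p$, so $E$ is Heisenberg of exponent $p$. When $(i,j)\neq(0,0)$, I solve the system $\beta i+\gamma j\equiv k$, $\beta'i+\gamma'j\equiv 0$, $\beta\gamma'-\gamma\beta'\equiv 1$ over $\F_p$ (solvable because $(i,j)\neq(0,0)$ and $k\neq 0$), and set $a=xy^\beta z^\gamma$, $b=y^{\beta'}z^{\gamma'}$. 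The class-$2$ identities then yield $a^p=x^pn^k$, $b^p=1$, and $[a,b]=(x^pn^k)^{\beta\gamma'-\gamma\beta'}=x^pn^k$, so $E=\langle a,b\rangle$ is extraspecial of exponent $p^2$, i.e., $E\cong M_{p^3}$. In either sub-case, since $x$ is central, $\langle x\rangle\cap E\le\langle x\rangle\cap Z(E)=\langle x\rangle\cap\langle x^pn^k\rangle$, and the latter is trivial because $n\notin\langle x\rangle$; the count $|\langle x\rangle|\cdot|E|=p^5$ together with the centrality of $x$ then delivers $G=\langle x\rangle\times E$. The main obstacle is assembling the generators $a,b$ for the extraspecial $E$ in the case $(i,j)\neq(0,0)$, which requires the simultaneous solution of a linear and a bilinear equation over $\F_p$; once $a,b$ are in hand, the verification that $\langle x\rangle\cap E=1$ is straightforward.
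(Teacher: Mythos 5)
Your argument is correct, and it takes a noticeably different route from the paper's. The paper assumes a decomposition $G\cong X\times Y$ and works with the possible factor orders, using $G'=X'\times Y'$ together with an auxiliary claim about class-$2$ groups with elementary abelian derived subgroup of rank $2$ to force $\l=0$, then using $Z(G)=Z(X)\times Z(Y)$ and the $p$-power subgroup to pin down the parameters in the cases $(p^4,p)$ and $(p^2,p^3)$; the converse direction is a one-line exhibition of the generators $x^iy^k,\,x^jz^k,\,x^pn^k$. You instead organize the proof by parameters: for $\l\neq 0$ you get indecomposability from $d(G)=3$ (so an order-$p^4$ factor would be $2$-generated with cyclic derived subgroup, against $G'=\langle x^p,n\rangle$ of rank $2$) together with the centre-order count for the $(p^3,p^2)$ shape; for $\l=k=0$ with $(i,j)\neq(0,0)$ you use $\Omega_1(Z(G))=\Phi(G)$ to exclude $C_p$ factors and the observation that $G/\langle v\rangle$ is abelian for every central $v$ of order $p^2$ to exclude an extraspecial complement; and for $\l=0\neq k$ you build the decomposition explicitly, solving a small linear system to produce the extraspecial factor when $(i,j)\neq(0,0)$. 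What each approach buys: the paper's is shorter because it leans on the invariants already established in Propositions \ref{prop:centre}--\ref{prop:powersubgroup} and leaves the final verification to the reader, while yours is more self-contained in the converse direction and avoids the paper's unproved auxiliary order bound. One point you leave implicit (as does the paper, essentially): the ``exponent $p$ iff $i=j=0$'' claim concerns the isomorphism type of the extraspecial factor in \emph{any} decomposition, so one should note that this type is an invariant of $G$ --- e.g.\ by Krull--Schmidt, or directly from Proposition \ref{prop:powersubgroup}, since in $C_{p^2}\times E$ the subgroup of $p$-th powers has order $p$ exactly when $E$ has exponent $p$.
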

\begin{proof}
Suppose that $G\cong X\times Y$, where $X$ and $Y$ are non-trivial.

We observe first that a $p$-group of class $2$ whose derived group is elementary abelian of rank $2$ must have order at least $p^5$. Since
the derived group $(X\times Y)'$ is equal to $X'\times Y'$, and since groups of order $p^2$ are abelian, it follows that $|G'|=p$. Since the element $x^pn^k$ is a non-trivial element of $G'$, it must generate $G'$. Now $n^\l$ lies in~$G'$, and so it follows that $\l=0$. This has the consequence, by Proposition \ref{prop:centre}, that $Z(G)$ is isomorphic to $C_{p^2}\times C_p$. Since $Z(X\times Y)=Z(X)\times Z(Y)$, we see that one of these summands---we may suppose it is $X$ without loss of generality---has a centre isomorphic to $C_{p^2}$.  

Suppose first that $|X|=p^4$ and $|Y|=p$. Then $G$ has a central element $u$ of order $p$ which is not a $p$-th power in $G$. But since $u\in\langle x^p,n\rangle$, this implies that $n$ is not a $p$-th power in $G$, and hence that $i=j=0$. We see too that $G'$ is contained in $X$, and so $x^pn^k\in X$. But the central elements of order $p$ in $X$ are those of $\langle x^p\rangle$, and so we must have $k=0$. So all parameters are $0$ in this case, and we see that $G\cong \langle x,y,z\rangle \times \langle n\rangle$, with the first summand being isomorphic to $Q$. 

We cannot have $|X|=p^3$, since $X$ has a centre of order $p^2$. So the only remaining possibility for the orders of $X$ and $Y$ is that $|X|=p^2$ and $|Y|=p^3$. Now since $Y$ is non-abelian, it is extraspecial. Since $G'=\langle x^pn^k\rangle$ is contained in $Y$, it is clear that $k\ne 0$. If $Y$ has exponent $p$ then there is no element $w$ of $G$ such that $w^p=x^pn^k$, and it clearly follows that $i=j=0$. 

We conclude the proof by showing that the stated values for the parameters give decomposable groups $G$. The case $(i,j,k,\l)=(0,0,0,0)$ is obvious. Suppose that $\l=0\ne k$. Then it is easily checked that $G$ decomposes as
\[
G=\langle x \rangle \times \langle x^iy^k, x^jz^k, x^pn^k \rangle.
\]
The second summand is extraspecial, and has exponent $p$ if and only if $i=j=0$.
\end{proof}

\subsection{Parameters}

A candidate extension of $Q$ may have several distinguished presentations with differing parameters. For each isomorphism class of extensions we shall isolate a particular set of parameters. It will be helpful for this purpose to to introduce some terminology. Let $(x,y,z,n)$ and $(x'y'z'n')$ be the generators in two distinguished presentations of a group $G$, with parameters $(i,j,k,l)$ and $(i',j',k',\l')$ respectively. We say that the map $(x,y,z,n)\mapsto(x',y',z',n')$ induces the change of parameters $(i,j,k,l)\mapsto(i',j',k',\l')$.  We shall always treat parameters as elements of~$\Z_p$, since they occur in the presentation (\ref{eq:G16extension}) only as indices to elements of order $p$. 

Some particular generator maps, with the associated change of parameters, are recorded in Table \ref{table:generatormaps}. It is a routine matter in each case to verify that the specified values of $(x',y'z',n')$ give generating sets for $G$, and that they afford a distinguished presentation with the claimed parameters $(i',j',k',\l')$.

\begin{table}
\caption{\label{table:generatormaps}Some generator maps with the associated changes of parameters.}
\begin{tabular}{llll}\\
Map & $(x',y',z',n')$ & $(i',j',k',\l')$ & Conditions \\ \hline
$A(\lambda)$ & $(x,y,z,n^{\lambda^{-1}})$ & $(\lambda i,\lambda j,\lambda k,\lambda\l)$ & $\lambda\ne 0$ \\
$B(\lambda)$ & $(x^\lambda,y^\lambda,z,n)$ & $(\lambda i,j,\lambda k,\lambda\l)$ & $\lambda\ne 0$ \\
$C(\lambda)$ & $(x^\lambda,y,z^\lambda,n)$ & $(i,\lambda j,\lambda k,\lambda^2\l)$ & $\lambda\ne 0$ \\
$D$ & $(x,y,y^{-j/i}z,n)$ & $(i,0,k,\l)$ & $i\ne 0$ \\
$E$ & $(x,z,y^{-1},n)$ & $(j,-i,k,0)$ & $\l=0$
\end{tabular}
\end{table}

\begin{lemma}\label{lemma:parameters}
Let $G$ be a candidate extension of $Q$. Let $\alpha$ be a quadratic non-residue modulo $p$. Then $G$ has a distinguished presentation with parameters in Table \ref{tab:Qparameters}.
\end{lemma}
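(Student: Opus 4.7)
The plan is a systematic case analysis using the generator maps of Table~\ref{table:generatormaps} to reduce an arbitrary parameter tuple $(i,j,k,\l)$ to a canonical representative listed in Table~\ref{tab:Qparameters}. The argument naturally splits into three stages: first, the non-scaling maps $D$ and $E$ simplify the pair $(i,j)$; second, the scaling maps $A$, $B$, $C$ normalise the nonzero entries; finally, a quadratic residue invariant accounts for the appearance of the non-residue $\alpha$ in the table.

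For the first stage, whenever $i \ne 0$ the map $D$ produces an equivalent presentation with $j = 0$. If $i = 0$ and $\l = 0$, then $E$ is available and converts $(0,j,k,0)$ into $(j,0,k,0)$, after which $D$ can be applied if $j \ne 0$. In the remaining configurations $i = 0$ and $\l \ne 0$, and neither structural map is available; here $j$ must be carried as an additional parameter. Thus after this stage every candidate extension has a distinguished presentation in one of two normal shapes: $(i,0,k,\l)$, or $(0,j,k,\l)$ with $\l \ne 0$.

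The second stage exploits the composition
\[
A(\alpha)B(\beta)C(\gamma)\colon (i,j,k,\l) \longmapsto \bigl(\alpha\beta\, i,\ \alpha\gamma\, j,\ \alpha\beta\gamma\, k,\ \alpha\beta\gamma^{2}\,\l\bigr),
\]
providing three independent scalars $\alpha,\beta,\gamma \in \F_p^\times$. A subcase analysis on which of the entries are zero lets me choose these scalars to normalise nonzero entries to the prescribed canonical values (typically $0$, $1$, or $\alpha$). The relative independence of $\alpha\beta$, $\alpha\gamma$, and $\gamma$ means that, in the generic situation, three of the four parameters can be fixed simultaneously.

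The main obstacle is that $\l$ scales by $\alpha\beta\gamma^{2}$, rather than linearly in the free scalars. In any subcase where, after fixing $i$ (or $k$) to a canonical value, the residual freedom reduces to a choice of $\gamma$, the parameter $\l$ can only be scaled within a coset of squares modulo $p$. Consequently the square class of $\l$ relative to the previously fixed parameter is an invariant of the orbit, and the case split based on the Legendre symbol forces the appearance of the non-residue $\alpha$ in certain canonical forms. A careful enumeration of the branches, using the quadratic-residue obstruction at each point to decide whether $\l$ can be normalised to $1$ or must remain a multiple of $\alpha$, then shows that every candidate extension admits a presentation whose parameters appear in Table~\ref{tab:Qparameters}.
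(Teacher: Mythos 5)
Your proposal is correct and follows essentially the same route as the paper: both reduce an arbitrary parameter tuple to the canonical list by a case analysis driven by the generator maps of Table~\ref{table:generatormaps}, with the square-class obstruction on $\l$ accounting exactly for the two forms $P_8(1)$ and $P_8(\alpha)$ and the free parameter in $P_9(\lambda)$. The only cosmetic issue is your reuse of $\alpha$ as a free scaling parameter, which clashes with the paper's fixed quadratic non-residue $\alpha$; rename those scalars in the write-up.
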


\begin{table}
\caption{Parameters for candidate extensions of $Q(p)$.}\label{tab:Qparameters}
\begin{tabular}{lll}
Presentation & $(i,j,k,\l)$ \\ \hline
$P_0$	& $(0,0,0,0)$ \\
$P_1$	& $(0,0,1,0)$ \\
$P_2$	& $(1,0,1,0)$ \\
$P_3$	& $(1,0,0,0)$ \\
$P_4$	& $(0,0,0,1)$ \\
$P_5$	& $(0,0,1,1)$ \\
$P_6$	& $(0,1,0,1)$ \\
$P_7$	& $(0,1,1,1)$ \\	
$P_8(\lambda)$	& $(1,0,0,\lambda)$ & $\lambda\in\{1,\alpha\}$ \\
$P_9(\lambda)$ & $(1,0,1,\lambda)$ & $\lambda\in\Z_p^\times$
\end{tabular}
\end{table}

\begin{proof}
Suppose that $G$ has a distinguished presentation $P$ with parameters $(i,j,k,\l)$. If $\l=0$ then it is a simple matter to transform $P$ into one of $P_0$,
$P_1$, $P_2$ or $P_3$ using the maps from Table \ref{table:generatormaps}, for suitable values of $\lambda$. Similarly, if $\l\ne 0$ and $i=0$ then $P$ can be transformed into one of $P_4$, $P_5$, $P_6$ or $P_7$ using these maps. Suppose that $\l$ and~$i$ are both non-zero. Suppose that $k=0$. There exists an element $\lambda\in\Z_p^\times$ such that $\lambda^2\in\{i\l^{-1}, \alpha il^{-1}\}$, and applying the transformations $C(\lambda)$, $A(i^{-1})$ and $D$ transforms~$P$ to $P_8(\lambda)$ for some $\lambda$. Otherwise, if $k\ne 0$, then applying the transformations $C(ik^{-1})$, $A(i^{-1})$ and $D$ transforms $P$ to
$P_9$, for $\lambda=i\ell k^{-1}$.
\end{proof}

\subsection{Non-isomorphic presentations}

We shall show that the presentations listed in Lemma \ref{lemma:parameters} are irredundant, in that no two of them give rise to isomorphic extensions of
$Q$. The observations collected in the following two propositions contain all that is required for this purpose.

\begin{proposition}\label{prop:invariants}
Let $G$ be a candidate extension.
\begin{enumerate}
\item If $G$ has a distinguished presentation with parameter $\l = 0$ then all of its distinguished presentations have $\l=0$.
\item If $G$ has a distinguished presentation with $\l\ne 0$ and with $i=0$, then all of its distinguished presentations have $i=0$.
\item If $G$ has a distinguished presentation with $i=j=0$, then all of its distinguished presentations have $i=j=0$.
\item If $G$ has a distinguished presentation with parameter $k = 0$ then all of its distinguished presentations have $k=0$.
\end{enumerate}
\end{proposition}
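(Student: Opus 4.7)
My plan is to handle parts (1)--(3) by appealing to the structural invariants already extracted in Propositions~\ref{prop:centre}, \ref{prop:abeliansubgroup}, and \ref{prop:powersubgroup}, and to treat part (4) by a direct computation tracking how the parameters transform under an arbitrary change of distinguished presentation.

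For (1), the centre $Z(G)$ is an intrinsic invariant of $G$, and Proposition~\ref{prop:centre} tells us that $Z(G)$ contains an element of order $p^2$ precisely when $\l = 0$. For (2), under the (intrinsic) condition $\l \neq 0$, Proposition~\ref{prop:abeliansubgroup} isolates a unique abelian subgroup $H$ of order $p^4$, whose isomorphism type ($C_{p^2} \times C_p \times C_p$ versus $C_{p^2} \times C_{p^2}$) distinguishes $i = 0$ from $i \neq 0$. For (3), Proposition~\ref{prop:powersubgroup} gives $|G^p| = p$ precisely when $i = j = 0$, and $|G^p|$ is intrinsic.

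The interesting case is (4). I would fix a distinguished presentation $(x,y,z,n)$ with parameters $(i,j,k,\l)$ and consider an arbitrary second distinguished presentation $(x',y',z',n')$. Writing the new generators in terms of the old as $n' = n^a$, $x' = x^b n^\beta$, $y' = x^c y^d z^e n^\gamma$, $z' = x^f y^g z^h n^\delta$ with $a \in \F_p^\times$, $b \in (\Z/p^2)^\times$, and $\bar b = b \bmod p$, the relation $[y'_Q, z'_Q] = (x'_Q)^p$ in $Q$ yields the determinantal identity $dh - eg \equiv \bar b \pmod p$. The requirement that $y'^p, z'^p \in \langle n \rangle$ forces $c \equiv f \equiv 0 \pmod p$: expanding the $p$-th powers using the class-$2$ formula produces a term $x^{cp}$ (respectively $x^{fp}$), and since $\langle x^p \rangle \cap \langle n \rangle = 1$ in $Z(G)$---otherwise $x$ would have order $p$ in $Q$, contradicting the structure of $Q$---we must have $x^{cp} = x^{fp} = 1$.

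Expanding $[y',z']$ using the bilinearity of commutators in class~$2$ produces
\[
[y',z'] = x^{p(dh-eg)}\, n^{k(dh-eg) + \l(ch - ef)}.
\]
The constraints $c \equiv f \equiv 0 \pmod p$ cause the $\l$-term of the $n$-exponent to vanish modulo $p$ regardless of $\l$, while $dh - eg \equiv \bar b \pmod p$. Thus $[y',z'] = (x')^p n^{k\bar b}$, and comparison with $[y',z'] = (x')^p (n')^{k'} = (x')^p n^{ak'}$ yields $k' \equiv a^{-1} k \bar b \pmod p$. Since $a, \bar b \in \F_p^\times$, this proves $k = 0 \iff k' = 0$. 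The main technical hurdle is the derivation of $c \equiv f \equiv 0 \pmod p$; once this is in hand, the rest is a routine commutator expansion.
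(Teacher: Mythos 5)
Your treatment of parts (1)--(3) coincides with the paper's: both reduce immediately to the intrinsic invariants recorded in Propositions~\ref{prop:centre}, \ref{prop:abeliansubgroup} and \ref{prop:powersubgroup}, and this part is fine.

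For part (4) you take a genuinely different route --- a direct change-of-generators computation, rather than the paper's construction of presentation-independent invariants (the direct-factor structure when $\l=0$, the position of $[G,A]$ relative to $\Phi(H)$ when $i=0$, and the trace of the map $\psi_w$ when $i\neq 0$). Your computation, however, has a gap at its very first step: you parametrize an ``arbitrary'' second distinguished presentation by $n'=n^a$, i.e.\ you assume that the kernel $\langle n'\rangle$ of the new quotient map onto $Q$ is the same central subgroup $\langle n\rangle$ as before. This is not automatic. A distinguished presentation only requires $\langle n'\rangle$ to be \emph{some} central subgroup of order $p$ with $G/\langle n'\rangle\cong Q$, and since $Z(G)$ contains the rank-$2$ subgroup $\langle x^p,n\rangle$, there are in general several such subgroups. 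For instance, for the presentation $P_4$ (parameters $(0,0,0,1)$) one checks that $x'=xy$, $y'=y$, $z'=z$, $n'=x^pn$ is again a distinguished presentation, with kernel $\langle x^pn\rangle\neq\langle n\rangle$; similarly for $P_1$ each subgroup $\langle x^pn^t\rangle$ with $t\neq 1$ serves as a kernel. Your normal forms $x'=x^bn^\beta$ and the deduction $c\equiv f\equiv 0\pmod p$ are likewise tied to the assumption $\langle n'\rangle=\langle n\rangle$. Since Proposition~\ref{prop:invariants} feeds into Lemma~\ref{lemma:nonisomorphic}, which must distinguish abstract isomorphism classes (and an isomorphism need not match up the chosen kernels), the unrestricted statement is what is required, and your argument as written proves only a special case. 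Within that special case the computation $k'=a^{-1}k\bar b$ is correct; to complete the proof you would either have to redo the calculation allowing $n'=x^{ps}n^a$ (with correspondingly more general $x'$), or pass to invariants of $G$ that make no reference to the presentation, as the paper does.
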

\begin{proof}
The first three parts follow immediately from Propositions \ref{prop:centre}, \ref{prop:abeliansubgroup} and \ref{prop:powersubgroup}.
We prove the fourth part here.  

If $\l=0$ then it is clear from Proposition \ref{prop:split} that $G$ has direct factors of order $p^2$ and $p^3$ if and only if $k\ne 0$.
So we may suppose that $\l\ne 0$. By Proposition \ref{prop:abeliansubgroup}, $G$ has a unique maximal abelian subgroup $H$. Suppose that $i=0$; then $H$ is isomorphic to $C_{p^2}\times C_p\times C_p$.
If $A$ is the elementary abelian subgroup of $H$ of order $p^3$; then $A$ is a characteristic subgroup of $G$. It is easy to check that $[G,A]=\langle x^pn^k \rangle$, and this lies in the Frattini subgroup of $H$ if and only if $k=0$. So we suppose instead that $i\ne 0$. Then $H\cong C_{p^2}\times C_{p^2}$. Let $A$ be the elementary abelian subgroup of $H$ of order $p^2$; this is the subgroup of $p$-th powers in $H$. For a fixed element
$w\in G\setminus H$ there is a well defined map $\psi_w:A\longrightarrow A$ given by
\[
\psi_w(h^p) = [h,w] \quad h\in H.
\]
If $A$ is considered as a vector space over $\F_p$, then $\psi_w$ is linear, and it is not hard to calculate that $\tr \phi_w =0$ if and only if $k=0$,   irrespective of the choice of $w$ (which may be assumed to be $z^t$ for some $t\ne 0$). This completes the proof.
\end{proof}

\begin{proposition}\label{prop:P8P9}
The groups given by the presentations $P_8(1)$ and $P_8(\alpha)$ are non-isomorphic. The groups given by the presentations $P_9(\lambda)$ and $P_9(\mu)$ are non-isomorphic whenever $\lambda\ne \mu$.
\end{proposition}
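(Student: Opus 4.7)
In both cases the parameters satisfy $i=1\neq 0$ and $\l\neq 0$, so Proposition~\ref{prop:abeliansubgroup} yields that $H=\langle x,y,n\rangle$ is the unique maximal abelian subgroup of $G$; in particular $H$ is characteristic, as is its $p$-th power subgroup $A=\langle x^p,n\rangle$, which lies in $Z(G)$. The plan is to extract numerical invariants from the map $\psi_w\colon A\to A$ introduced in the proof of Proposition~\ref{prop:invariants}(4), which will distinguish the presentations within each family.

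First I would check that, for any $w\in G\setminus H$, writing $w=z^tu$ with $t\in\F_p^\times$ and $u\in H$ and using the nilpotency class~$2$ hypothesis, one has $\psi_w=t\,\psi_z$. An isomorphism $\phi\colon G\to G'$ then carries $H$ to $H'$ and $A$ to $A'$, and sends $z$ into the coset $(z')^t H'$ for some $t\in\F_p^\times$; the identity $\phi([h,z])=[\phi(h),\phi(z)]$ shows that the matrices of $\psi_z$ and $\psi_{z'}$ in any chosen bases of $A$ and $A'$ satisfy $\psi_z=t\,P^{-1}\psi_{z'}P$ for some $P\in GL_2(\F_p)$. Consequently the trace and determinant of $\psi_z$ transform as $\tr\mapsto t\,\tr$ and $\det\mapsto t^2\,\det$, and so the square class of $\det\psi_z$ in $\F_p^\times/(\F_p^\times)^2$ is an isomorphism invariant of $G$, as is the ratio $\det\psi_z/\tr(\psi_z)^2$ whenever the trace is nonzero.

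The computation of the matrix is then immediate: in the basis $(x^p,n)=(x^p,y^p)$ of $A$ we have $\psi_z(x^p)=[x,z]=n^\l$ and $\psi_z(n)=[y,z]=x^pn^k$, giving
\[
\psi_z=\begin{pmatrix}0 & 1\\ \l & k\end{pmatrix},\qquad\tr\psi_z=k,\quad\det\psi_z=-\l.
\]
For $P_8(\lambda)$ we have $k=0$ and $\l=\lambda\in\{1,\alpha\}$, so $\tr\psi_z=0$ and $\det\psi_z=-\lambda$. The quotient $(-1)/(-\alpha)=\alpha^{-1}$ is a non-residue, so $-1$ and $-\alpha$ lie in distinct square classes of $\F_p^\times$, proving $P_8(1)\not\cong P_8(\alpha)$. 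For $P_9(\lambda)$ we have $k=1$ and $\l=\lambda\in\F_p^\times$, so $\tr\psi_z=1\neq 0$ and $\det\psi_z/\tr(\psi_z)^2=-\lambda$; distinct values of $\lambda$ give distinct values of this invariant, so no two of the groups $P_9(\lambda)$ are isomorphic. The main delicate point will be verifying the transformation rule for $\psi_z$ under isomorphisms in the second paragraph; once this is in hand, the remaining computation is routine.
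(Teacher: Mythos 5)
Your proposal is correct and follows essentially the same route as the paper: both use the map $\psi_w$ on the characteristic subgroup $A$ from the proof of Proposition~\ref{prop:invariants}, note that changing $w$ rescales $\psi_w$ by a nonzero scalar $\tau$, and extract the square class of $\det\psi_w=-\tau^2\l$ (respectively the ratio $\det\psi_w/(\tr\psi_w)^2=-\l/k^2$ when $k=1$) as isomorphism invariants. Your write-up merely makes explicit the conjugation-plus-scaling transformation rule that the paper leaves implicit.
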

\begin{proof}
Suppose that $G$ has a distinguished presentation with parameters $(1,0,k,\l)$, where $\l\ne 0$. Let $A$ be the subgroup and $\psi_w$ the linear map $A\longrightarrow A$, which are defined in the proof of Proposition \ref{prop:invariants}. It is straightforward to calculate that $\tr \phi_w =\tau k$
and $\det \phi_w = -\tau^2 \l$, for some $\tau\in\F_p^\times$ which depends on the choice of $w$. It is clear from the expression for the determinant that whether or not $\l$ is a quadratic residue modulo $p$ is an invariant of the group. Furthermore, if we suppose that $k=1$ then the value of~$\tau$ is determined, and so $\l$ itself is an invariant. The proposition follows immediately from these observations.
\end{proof}

\begin{lemma}\label{lemma:nonisomorphic}
Let $G$ and $H$ be candidate extensions with distinguished presentations from Table \ref{tab:Qparameters}. If their parameters are not the same, then $G$ and $H$ are not isomorphic.
\end{lemma}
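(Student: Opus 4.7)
The plan is to extract from Propositions \ref{prop:invariants} and \ref{prop:P8P9} a list of isomorphism invariants of candidate extensions of $Q$, and then verify that this list separates the ten entries of Table \ref{tab:Qparameters}. The invariants I would use are: (i) whether $\l=0$; (ii) assuming $\l\ne 0$, whether $i=0$; (iii) whether $i=j=0$; (iv) whether $k=0$; and (v) in the family with $\l\ne 0$ and $i\ne 0$, the quadratic residue class of $\l$ together with the value of $\l$ itself once $k$ has been normalised to $1$.

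First I would apply Proposition \ref{prop:invariants}(1),(2) to partition the ten presentations into three disjoint families according to invariants (i)--(ii): those with $\l=0$, namely $P_0, P_1, P_2, P_3$; those with $\l\ne 0$ and $i=0$, namely $P_4, P_5, P_6, P_7$; and those with $\l\ne 0$ and $i\ne 0$, namely $P_8(1)$, $P_8(\alpha)$ and $P_9(\lambda)$ for $\lambda\in\Z_p^\times$. Any two presentations drawn from different families necessarily present non-isomorphic groups, since the family is determined by intrinsic features of the group (the isomorphism type of the centre, and of the unique maximal abelian subgroup when $\l\ne 0$).

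Within each family I would separate the presentations using invariants (iii)--(v). In the first family, Proposition \ref{prop:invariants}(3) splits $\{P_0,P_1\}$ from $\{P_2,P_3\}$ according to whether $i=j=0$, and Proposition \ref{prop:invariants}(4) then splits each pair by the vanishing of $k$. The same two invariants dispose of the second family: $\{P_4,P_5\}$ has $i=j=0$ while $\{P_6,P_7\}$ does not, and each pair is again split by whether $k=0$. For the third family, Proposition \ref{prop:invariants}(4) separates $P_8(1), P_8(\alpha)$ (where $k=0$) from the presentations $P_9(\lambda)$ (where $k\ne 0$); Proposition \ref{prop:P8P9} then distinguishes $P_8(1)$ from $P_8(\alpha)$ via the Legendre symbol of $\l$, and distinguishes $P_9(\lambda)$ from $P_9(\mu)$ whenever $\lambda\ne\mu$ via the value of $\l$ itself.

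There is no substantial obstacle remaining: the real content of the lemma has been absorbed into the two preceding propositions, and what is left reduces to the bookkeeping exercise of reading off $(i,j,k,\l)$ from Table \ref{tab:Qparameters} and confirming that the tuple of invariants above takes a distinct value at each entry. The only step requiring active attention is to check, case by case, that my invariant list really is fine enough in the third family; this is immediate from Proposition \ref{prop:P8P9}, which by the normalisation $k=1$ renders the full value of $\l$ (not just its residue class) into an isomorphism invariant.
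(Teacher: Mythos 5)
Your proposal is correct and follows the paper's own route: the paper proves the lemma by citing Propositions \ref{prop:invariants} and \ref{prop:P8P9}, and your write-up simply makes explicit the bookkeeping (partition by $\l=0$, then $i=0$, then $i=j=0$ and $k=0$, then the $P_8$/$P_9$ invariants) that this citation leaves implicit. No gaps; the case check across the ten entries of Table \ref{tab:Qparameters} is exactly what is needed.
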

\begin{proof}
This follows from Propositions \ref{prop:invariants} and \ref{prop:P8P9}.
\end{proof}

\subsection{Exceptional extensions}\label{s:excextQ}

\begin{lemma}\label{lemma:exceptionalcondition}
Let $G$ be a candidate extension of $Q$. Then $G$ is exceptional (that is, $\mu(G)<\mu(Q)$) if and only if $G$ has two subgroups $K_1$ and $K_2$, each of order $p^3$, such that $K_1\cap K_2$ has order $p$ and is non-central in $G$.
\end{lemma}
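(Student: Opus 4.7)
The plan is to prove the biconditional in two directions, with the key permutation-theoretic ingredient coming from Johnson's theorem and the key structural ingredient coming from Proposition \ref{prop:centre}.

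For the easy direction ($\Leftarrow$), given $K_1,K_2$ of order $p^3$ with $K_1\cap K_2$ non-central of order $p$, I consider the natural permutation action of $G$ on the disjoint union of coset spaces $G/K_1\sqcup G/K_2$. The degree is $2p^2$, which is strictly less than $p^3=\mu(Q)$ for $p\ge 5$. The kernel of the action is $\core_G(K_1)\cap\core_G(K_2)$, a normal subgroup of $G$ contained in $K_1\cap K_2$, and so has order $1$ or $p$. If the kernel had order $p$, it would coincide with $K_1\cap K_2$ and be a normal subgroup of the $p$-group $G$ of order $p$, hence central—contradicting the hypothesis. So the action is faithful and $\mu(G)\le 2p^2<\mu(Q)$.

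For the converse ($\Rightarrow$), assume $\mu(G)<p^3$. By Proposition \ref{prop:centre}, $Z(G)$ is minimally generated by two elements; Theorem \ref{thm:johnson} then forces a minimal faithful representation to have exactly two orbits. Let $L_1,L_2$ be the point stabilizers, with $\core(L_1)\cap\core(L_2)=1$ and $|G{:}L_1|+|G{:}L_2|=\mu(G)<p^3$. Enumerating the possible stabilizer orders and excluding $(p^4,p^4)$ (both are normal, intersecting in order at least $p^3>1$, violating faithfulness), only two cases remain: (i) $|L_1|=|L_2|=p^3$ with degree $2p^2$; or (ii) $|L_1|=p^4,\ |L_2|=p^3$ with degree $p+p^2$. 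In case (i), an order count gives $|L_1\cap L_2|\in\{p,p^2\}$, and the intersection cannot be central because any central subgroup contained in both $L_i$ would lie in $\core(L_1)\cap\core(L_2)=1$. When $|L_1\cap L_2|=p$ we simply take $K_i=L_i$; when $|L_1\cap L_2|=p^2$ we pick a non-central order-$p$ subgroup $N$ inside $L_1\cap L_2$ and exhibit order-$p^3$ subgroups $K_1,K_2$ intersecting exactly in $N$, by working in $G/N$.

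The main obstacle is case (ii), where the minimal representation does not directly exhibit subgroups of order $p^3$. Here $M:=\core(L_2)$ must be central of order $p$ with $M\not\subseteq L_1$ (were $L_2$ core-free, a smaller transitive faithful representation would exist, contradicting minimality). Since $L_1$ has index $p$ it contains $G'$, so $M\not\subseteq L_1$ forces $M\not\subseteq G'$, and by Proposition \ref{prop:centre} this is only possible when $\ell=0$ in the distinguished presentation; in that case $Z(G)\cong C_{p^2}\times C_p$ has order $p^3$ and $G'$ has order $p$. I would then set $K_2=L_2$ and choose $K_1\le L_1$ of order $p^3$ so that $K_1\cap L_2$, which is forced to lie inside the order-$p^2$ subgroup $L_1\cap L_2$, is a non-central order-$p$ subgroup. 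The hardest part of the argument is verifying that such a $K_1$ exists: it requires identifying a non-central order-$p$ subgroup of $L_1\cap L_2$ (a delicate step if $L_1\cap L_2$ happens to be cyclic of order $p^2$ with central socle, in which case one passes to a different non-central order-$p$ subgroup $N\le G$ and constructs $K_1,K_2$ inside $G/N$ instead), and then counting hyperplanes of $L_1$ to produce one containing this subgroup but not all of $L_1\cap L_2$.
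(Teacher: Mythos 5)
Your ($\Leftarrow$) direction is correct and is essentially the paper's own argument: the kernel of the action on $G/K_1\sqcup G/K_2$ is $\core_G(K_1)\cap\core_G(K_2)\le K_1\cap K_2$, and if it were nontrivial it would be a normal, hence central, subgroup of order $p$, contrary to hypothesis; so $\mu(G)\le 2p^2<p^3$.

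The ($\Rightarrow$) direction, however, has a genuine gap. After the (correct) appeal to Theorem \ref{thm:johnson} you are left with two subcases that you do not actually finish: stabilizers of orders $(p^3,p^3)$ with $|L_1\cap L_2|=p^2$, and stabilizers of orders $(p^4,p^3)$. In both you only assert that suitable $K_1,K_2$ can be ``exhibited'' (``by working in $G/N$'', ``counting hyperplanes of $L_1$''), and you yourself flag the existence of $K_1$ in case (ii) as unverified; as written the constructions can fail (for instance, if $L_1\cap L_2\subseteq\Phi(L_1)$ there is no maximal subgroup of $L_1$ containing your $N$ but avoiding $L_1\cap L_2$, and your fallback when $L_1\cap L_2$ is cyclic with central socle is not an argument). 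The missing idea, which is how the paper closes both doors at once, is that in a candidate extension all $p$-th powers are central (Proposition \ref{prop:powersubgroup}); hence $Y$, the intersection of the two stabilizers, which meets $Z(G)$ trivially by faithfulness, has exponent $p$, and if $|Y|\ge p^2$ then $Y\langle x^p,n\rangle$ would be elementary abelian of rank $4$, which $G$ does not possess (Propositions \ref{prop:centre} and \ref{prop:abeliansubgroup}). This forces $|Y|=p$ outright and simultaneously rules out any stabilizer of order $p^4$, since that would give $|Y|\ge p^2$ by an order count. Alternatively, your case (ii) could be shown to be vacuous: there $G=L_1\times\core_G(L_2)$ is a direct product with a factor of order $p$, so Proposition \ref{prop:split} (together with Krull--Schmidt) forces $G\cong Q\times C_p$ and hence $\mu(G)\ge\mu(Q)=p^3$, contradicting $\mu(G)=p^2+p$; but some argument of this kind must be supplied --- as it stands, two of your cases rest on unproven constructions, and your inference that case (ii) requires $\l=0$ also needs the computation $G'=\langle x^p,n\rangle=Z(G)$ when $\l\ne 0$, which Proposition \ref{prop:centre} alone does not give.
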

\begin{proof}
Since $Z(G)$ has rank $2$, Theorem \ref{thm:johnson} tells us that a faithful permutation representation of $G$ of minial degree has exactly two orbits. Let $K_1$ and $K_2$ be two non-conjugate point stabilizers. Then $\mu(G)$ = $|G:K_1||G:K_2|$. If $G$ us exceptional, then since $\mu(Q)=p^3$, we see that $K_1$ and $K_2$ must each have order at least $p^3$. So the intersection $Y=K_1\cap K_2$ is non-trivial. Since the representation of $G$ is faithful, it is clear that $Y\cap Z(G)$ is trivial. Since all $p$-th powers in $G$ lie in $Z(G)$ it follows that $Y$ has exponent $p$. Now $YV$ has order $p^2|Y|$, and since it is clear that $G$ has no elementary abelian subgroup of rank $4$, we must have $|Y|=p$. It follows that $|K_1|=|K_2|=p^3$. 

For the converse, suppose that $K_1$ and $K_2$ are subgroups of $G$ of order $p^3$ such that $Y=K_1\cap K_2$ is a non-central subgroup of order $p$. Then $Y$ is not normal in $G$, and the permutation representation of $G$ on the cosets of $K_1$ and $K_2$ is faithful. It follows that $\mu(G)\le 2p^2$, and so $G$ is exceptional.
\end{proof}

\begin{proposition}\label{prop:ExP0P1P2}
The candidate extension with presentation $P_0$ is not exceptional.
The candidate extensions with presentations $P_1$ and $P_2$ are exceptional.
\end{proposition}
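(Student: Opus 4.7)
The strategy is to apply Proposition~\ref{prop:split} to identify the isomorphism type of $G$ in each case, and then either exhibit $Q$ as a subgroup of $G$ (for $P_0$) or bound $\mu(G)$ from above by $2p^2$ (for $P_1$ and $P_2$). Throughout we use $\mu(Q)=p^3$, as established in the first reduction.

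For $P_0$ the parameters are $(0,0,0,0)$, so Proposition~\ref{prop:split}(1) yields $G\cong Q\times C_p$. In particular $Q$ embeds in $G$, so Lemma~\ref{lem:criteria}(1) implies that $Q$ is not a distinguished quotient of $G$; equivalently $\mu(G)\ge\mu(Q)$, and $G$ is not exceptional as an extension of $Q$.

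For $P_1$ and $P_2$ we have $\l=0\ne k$, so Proposition~\ref{prop:split}(2) gives $G\cong C_{p^2}\times E$, with $E$ extraspecial of order $p^3$ (of exponent $p$ in the case of $P_1$, and of exponent $p^2$ in the case of $P_2$). Any faithful permutation representation of $C_{p^2}$ must contain a cycle of length $p^2$, so $\mu(C_{p^2})=p^2$. Similarly $\mu(E)=p^2$: every subgroup of $E$ of order $p^2$ has index $p$, hence is normal and contains $Z(E)$, so any faithful action of $E$ must have at least one orbit whose stabilizer does not contain $Z(E)$, forcing that stabilizer to have order at most $p$ and the orbit to have size at least $p^2$; conversely, any subgroup of order $p$ distinct from $Z(E)$ is non-normal, has trivial core, and affords a faithful transitive representation of degree $p^2$. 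The disjoint union of faithful actions of the two direct factors exhibits $G$ acting faithfully on $2p^2$ points, so $\mu(G)\le 2p^2<p^3=\mu(Q)$, where the strict inequality uses $p>3$; hence $G$ is exceptional in both cases. (Equivalently, one may verify the criterion of Lemma~\ref{lemma:exceptionalcondition} directly with $K_1=E$ and $K_2=C_{p^2}\times H$, where $H\le E$ is any non-normal subgroup of order $p$: both $K_i$ have order $p^3$, and $K_1\cap K_2=H$ is non-central of order $p$.)

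No step presents a serious obstacle: Proposition~\ref{prop:split} furnishes the decomposition, and the minimal degrees of $C_{p^2}$ and of an extraspecial group of order $p^3$ are elementary computations. The only mildly delicate point is the lower bound $\mu(E)\ge p^2$, which is handled by the normality argument for index-$p$ subgroups sketched above.
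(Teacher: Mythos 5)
Your proposal is correct and follows essentially the same route as the paper: both use Proposition~\ref{prop:split} to identify $P_0$ with $Q\times C_p$ and $P_1$, $P_2$ with $C_{p^2}\times E$ for $E$ extraspecial, and then conclude via $\mu(G)\le 2p^2<p^3=\mu(Q)$. You merely supply more detail (the explicit computation of $\mu(E)=p^2$ and the alternative check against Lemma~\ref{lemma:exceptionalcondition}) than the paper's terser argument.
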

\begin{proof}
These are the presentations which give rise to decomposable extensions, by Proposition \ref{prop:split}.
The presentation $P_0$ is afforded by $Q\times C_p$, which is clearly not exceptional. Each of $P_1$ and $P_2$ gives a group
$G\cong C_{p^2}\times E$, where $E$ is extraspecial of order $p^3$. Since both of the extraspecial groups of order $p^3$ have permutation representations of degree $p^2$, we see that $\mu(G)=2p^2$ in each case, and so $G$ is exceptional.
\end{proof}
The groups with presentations $P_1$ and $P_2$ appear in Table \ref{table:results} as $E_1$ and $E_2$ respectively. 

\begin{proposition}\label{prop:ExP3}
The candidate extension with presentation $P_3$ is not exceptional.
\end{proposition}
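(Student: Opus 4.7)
The plan is to invoke Lemma~\ref{lemma:exceptionalcondition} and rule out the existence of the required pair of subgroups $K_1,K_2$. Specifically, I will establish the following stronger claim: \emph{every subgroup of $G$ of order $p^3$ contains the element $x^p$}, which is central. From this it follows immediately that any intersection of two such subgroups that has order $p$ must coincide with $\langle x^p\rangle\le Z(G)$, violating the non-centrality requirement of the lemma.

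First I would unpack $P_3$, with parameters $(i,j,k,\l)=(1,0,0,0)$. Here the relation $y^p=n$ makes the generator $n$ redundant, and $G=\langle x,y,z\rangle$ with $x,y$ commuting of order $p^2$, $z$ of order $p$, $[x,z]=1$, and $[y,z]=x^p$. By Proposition~\ref{prop:centre}, $Z(G)=\langle x,n\rangle\cong C_{p^2}\times C_p$ has order $p^3$, and it is clear that $G'=\langle x^p\rangle$. The quotient $\overline G:=G/Z(G)$ is elementary abelian of order $p^2$, generated by $\overline y$ and $\overline z$.

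To prove the claim, let $K\le G$ have order $p^3$ and consider the image $\overline K\le \overline G$. I would handle three cases according to $|\overline K|$. If $\overline K$ is trivial then $K=Z(G)$, which contains $x^p$. If $|\overline K|=p$ then $K\cap Z(G)$ is a maximal subgroup of $Z(G)$; and since every maximal subgroup of $C_{p^2}\times C_p$ contains the Frattini subgroup $\Phi(Z(G))=\langle x^p\rangle$, we again get $x^p\in K$. If $|\overline K|=p^2$ then $K$ surjects onto $\overline G$, so $K$ contains elements $y'=y\zeta_1$ and $z'=z\zeta_2$ with $\zeta_i\in Z(G)$; since $\zeta_1,\zeta_2$ are central, $[y',z']=[y,z]=x^p\in K$.

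Each case is essentially immediate, so there is no real obstacle beyond the initial observation. The substantive point is that for these particular parameters the derived subgroup $G'=\langle x^p\rangle$ is automatically contained in every subgroup of order $p^3$: this forces any order-$p$ intersection of two such subgroups to be central, and Lemma~\ref{lemma:exceptionalcondition} then yields that $G$ is not exceptional.
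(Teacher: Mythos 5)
Your proof is correct and follows essentially the same route as the paper: both arguments establish that every subgroup of order $p^3$ contains the central element $x^p$ and then conclude via Lemma~\ref{lemma:exceptionalcondition}. The only difference is internal — the paper derives the claim by noting that $x^p\notin K$ would force $K$ abelian and $G=K\langle x\rangle$ abelian, whereas you split into cases according to the image of $K$ in $G/Z(G)$ — and both versions are sound.
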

\begin{proof}
Let $G$ have presentation $P_3$. Then $x$ is central, and $G'=\langle x^p\rangle$. Suppose that~$K$ is a subgroup of order $p^3$, such that
$x^p\notin K$. Then $K$ is abelian, and $G=K\langle x\rangle$. But $G$ is not abelian, which is a contradiction. Thus every subgroup of $G$ of order $p^3$ contains the central element $x^p$, and so by Lemma \ref{lemma:exceptionalcondition} $G$ is not exceptional.
\end{proof}

\begin{proposition}\label{prop:ExP4P5}
The candidate extensions with presentations $P_4$ and $P_6$ are not exceptional.
\end{proposition}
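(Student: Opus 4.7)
The plan is to apply Lemma~\ref{lemma:exceptionalcondition} by showing that in both presentations $P_4$ and $P_6$, every subgroup $K \leq G$ of order $p^3$ contains the central element $x^p$. Granted this, any pair $(K_1, K_2)$ with $|K_1 \cap K_2| = p$ satisfies $K_1 \cap K_2 = \langle x^p\rangle \subseteq Z(G)$, so no non-central intersection of order $p$ can occur and $G$ is not exceptional.

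Fix $K \leq G$ of order $p^3$. By Proposition~\ref{prop:centre} we have $Z(G) = \langle x^p, n\rangle$ of order $p^2$, and since $G' \subseteq Z(G)$ and every $p$-th power in $G$ lies in $Z(G)$, a Frattini-style argument excludes $K \cap Z(G) = 1$. Hence $|K \cap Z(G)| \in \{p, p^2\}$; the case $p^2$ gives $Z(G) \subseteq K$ and we are done, so assume $|K \cap Z(G)| = p$, whence $\bar K := KZ(G)/Z(G)$ is a $2$-dimensional subspace of $V := G/Z(G)$ (viewed as an $\mathbb{F}_p$-space with basis $\bar x, \bar y, \bar z$). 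Two tools are available on $V$: the commutator map $[\,,\,]\colon V \times V \to Z(G)$, and, by Proposition~\ref{prop:ppower}, the $p$-th power homomorphism $V \to Z(G)$, which a direct computation evaluates as $\bar g = a\bar x + b\bar y + c\bar z \mapsto x^{ap}$ in $P_4$ and $\bar g \mapsto x^{ap}n^c$ in $P_6$.

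For $P_4$, either some vector in $\bar K$ has nonzero $\bar x$-coefficient, in which case the $p$-th power map deposits a generator of $\langle x^p\rangle$ into $K$; or else $\bar K = \langle \bar y, \bar z\rangle$ by dimension, and $[y,z] = x^p$ puts $\langle x^p\rangle \subseteq K'$. For $P_6$, if $\bar y \notin \bar K$ the projection of $\bar K$ to $V/\langle\bar y\rangle$ is an isomorphism, so the $p$-th power map surjects onto $Z(G)$ and $Z(G) \subseteq K$. Otherwise $\bar y \in \bar K$, and $\bar K = \langle \bar y, \alpha\bar x + \beta\bar z\rangle$ with $(\alpha,\beta) \neq (0,0)$: when $\beta = 0$ we have $\bar K = \langle \bar x, \bar y\rangle$, so $K \leq H = \langle x, y, n\rangle$ (the maximal abelian subgroup of Proposition~\ref{prop:abeliansubgroup}), and every index-$p$ subgroup of $H \cong C_{p^2}\times C_p\times C_p$ contains the $p$-th power subgroup $\langle x^p\rangle$; when $\beta \neq 0$, the commutator $[y, \alpha x + \beta z] = \beta x^p$ generates $\langle x^p\rangle$ inside $K'$. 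Either way $x^p \in K$, as required.

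The argument presents no real conceptual obstacle: the pattern is uniform in that whenever the $p$-th power map fails to deposit $x^p$ into $K$ (because $\bar K$ lies in a subspace on which $p$-th powers are trivial), a commutator involving $y$ does so instead. The only care needed is the bookkeeping to check that these two mechanisms exhaustively cover every possible $2$-dimensional $\bar K$.
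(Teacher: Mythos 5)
Your proof is correct, and it shares the paper's overall strategy: both arguments show that every subgroup $K$ of order $p^3$ contains the central element $x^p$, and then invoke Lemma \ref{lemma:exceptionalcondition}. The implementations differ, though. The paper intersects $K$ with the unique abelian subgroup $H=\langle x,y,n\rangle$ of Proposition \ref{prop:abeliansubgroup}, notes that if $x^p\notin K$ then $Y=K\cap H$ is elementary abelian of order $p^2$ and contains a non-central element whose centralizer is $H$, deduces that $K$ is non-abelian, and then uses $k=0$ to get $1\neq K'\le[Y,G]=\langle x^p\rangle$, so $x^p\in K$ after all; this argument is uniform in the parameters $(0,j,0,\ell)$ with $\ell\neq 0$. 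You instead pass to $V=G/Z(G)$ and run an explicit case analysis on the $2$-dimensional image $\bar K$, playing the $p$-th power homomorphism of Proposition \ref{prop:ppower} (which deposits a generator of $\langle x^p\rangle$, or even all of $Z(G)$, into $K$ unless $\bar K$ avoids the relevant coordinates) against the commutator pairing (which supplies $x^p$ via $[y,z]=x^p$, since $k=0$, in the remaining cases, or via $K\le H$ and $H^p=\langle x^p\rangle$ when $\bar K=\langle\bar x,\bar y\rangle$). Your route treats $P_4$ and $P_6$ separately and is a little longer, but it is fully explicit and avoids the paper's compressed step ``$[Y,G]=\langle x^p\rangle$, and so $x^p\in K$'' (which implicitly uses $K'\le[Y,G]$ and $K'\neq 1$). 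The only point you leave vague is the claim $K\cap Z(G)\neq 1$: the quick justification is that otherwise $K\cong KZ(G)/Z(G)$ is abelian (as $G'\le Z(G)$) and $G=KZ(G)$ would then be abelian, a contradiction; this is easily filled and does not affect correctness.
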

\begin{proof}
These are presentations for which $\ell\neq 0$ and $i=0$. So by Proposition \ref{prop:abeliansubgroup} the candidate subgroup $G$ has an
abelian subgroup $H=\langle x,y,n\rangle$ which has the structure $C_{p^2}\times C_p\times C_p$. Suppose that $K$ is a subgroup of $G$ of order $p^3$ which does not contain~$x^p$. Then it is easy to see that $Y=K\cap H$ is elementary abelian of order $p^2$. Now $Y$ contains an element of $H\setminus V$, whose centralizer in $G$ is $H$. It follows that $K$ is not abelian. But since $k=0$ for these presentations, we see that $[Y,G]=\langle x^p\rangle$, and so $x^p\in K$, a contradiction. Hence the central element $x^p$ lies in every subgroup of $G$ of order $p^3$.
\end{proof}

\begin{proposition}\label{prop:ExP6P7}
The candidate extensions with presentations $P_5$ and $P_7$ are exceptional.
\end{proposition}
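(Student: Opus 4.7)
The plan is to apply Lemma \ref{lemma:exceptionalcondition} by exhibiting, in each case, two subgroups $K_1,K_2\le G$ of order $p^3$ whose intersection is $\langle y\rangle$. In both $P_5$ and $P_7$ we have $i=0$, so $y$ has order $p$; and since $[y,z]=x^pn\ne 1$, it is non-central. The same second subgroup $K_2=\langle x,y\rangle$ will work throughout: as $[x,y]=1$, $|x|=p^2$, and $|y|=p$, it is abelian of shape $C_{p^2}\times C_p$, hence has order $p^3$.

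For $P_5$, where $(i,j,k,\ell)=(0,0,1,1)$, both $y$ and $z$ have order $p$. I take $K_1=\langle y,z\rangle$; since $[y,z]=x^pn$ is central of order $p$, standard class-$2$ collection produces a normal form $y^az^b(x^pn)^c$ with $0\le a,b,c<p$, and closure under multiplication forces $|K_1|=p^3$. An element of $K_2$ has the form $x^Ay^B$, carrying no $z$- or $n$-component, so the equation $y^az^b(x^pn)^c=x^Ay^B$ forces $b=c=0$; hence $K_1\cap K_2=\langle y\rangle$.

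For $P_7$, where $(i,j,k,\ell)=(0,1,1,1)$, the element $z$ now has order $p^2$ (because $z^p=n$), so $\langle y,z\rangle$ swells to order $p^4$. The remedy is to replace $z$ by $w:=xz$. By Proposition \ref{prop:ppower} the $p$-th power map is a homomorphism, so $w^p=x^pz^p=x^pn$; and since $[x,y]=1$, we have $[y,w]=[y,z]=x^pn=w^p$. Thus $K_1:=\langle y,w\rangle$ is a class-$2$ two-generator group in which $[y,w]\in\langle w\rangle$, with normal form $y^aw^b$ for $0\le a<p$, $0\le b<p^2$, giving $|K_1|=p^3$. Expanding $w^b=(xz)^b=x^bz^bn^{-\binom{b}{2}}$ by class-$2$ collection and matching against some $x^Ay^B\in K_2$ forces $b=0$ (to kill the $z$-component), whence once more $K_1\cap K_2=\langle y\rangle$.

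In both cases Lemma \ref{lemma:exceptionalcondition} then yields exceptionality. The only real obstacle is the $P_7$ case: the naive analogue $\langle y,z\rangle$ of the $P_5$ choice is too large, and one must find an element of order $p^2$ playing the role of $z$ whose $p$-th power still lies inside the subgroup one is building. The element $xz$ works precisely because shifting $z$ by $x$ produces an element whose $p$-th power coincides with its commutator with $y$, so the extraspecial-type relation $[y,w]=w^p$ keeps $\langle y,w\rangle$ at size $p^3$.
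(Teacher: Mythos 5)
Your argument is correct, and it follows the same strategy as the paper: apply Lemma \ref{lemma:exceptionalcondition} by exhibiting two explicit subgroups of order $p^3$ meeting in the non-central subgroup $\langle y\rangle$. For $P_5$ your pair $\{\langle x,y\rangle,\langle y,z\rangle\}$ is literally the paper's pair, since $x^pn=[y,z]$ so $\langle x^pn,y,z\rangle=\langle y,z\rangle$. For $P_7$ you diverge, and the divergence is in your favour: the paper's stated witness $K_2=\langle x^pn^k,y,z\rangle$ does not actually satisfy the hypotheses of the lemma in this case, because $z^p=n$ forces $n\in K_2$ and hence $x^p\in K_2$, so this subgroup has order $p^4$ and meets $\langle x,y\rangle$ in the central-containing subgroup $\langle x^p,y\rangle$. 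Your replacement $K_1=\langle y,xz\rangle$, built on the observation that $(xz)^p=x^pz^p=x^pn=[y,xz]$ (using Proposition \ref{prop:ppower} and $[x,y]=1$), is a correct repair, and your identification of exactly why the naive $\langle y,z\rangle$ fails when $j\neq 0$ is the right diagnosis. One small point to tighten: in the $P_7$ intersection computation, killing the $z$-component of $w^by^a$ only forces $b\equiv 0\pmod p$, not $b=0$; you then need the additional observation that for $b=pb_1$ with $b_1\not\equiv 0$ the element is $x^{pb_1}n^{b_1}y^a$, whose nontrivial $n$-component keeps it out of $\langle x,y\rangle$, so that the intersection is indeed $\langle y\rangle$. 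Similarly, the count $|\langle y,w\rangle|=p^3$ uses $\langle y\rangle\cap\langle w\rangle=1$, which is immediate from the images in $Q$ but worth a word. These are one-line fixes; the proof is otherwise complete.
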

\begin{proof}
It is easy to check that the subgroups $K_1=\langle x,y\rangle$ and $K_2=\langle x^pn^k,y,z\rangle$ satisfy the conditions stated in Lemma \ref{lemma:exceptionalcondition}.
\end{proof}
The groups with presentations $P_5$ and $P_7$ appear in Table \ref{table:results} as $E_3$ and $E_4$ respectively. 

\begin{proposition}\label{prop:ExP8P9}
The candidate extension with presentation $P_8(\lambda)$ is exceptional if $\lambda=1$, but not if $\lambda=\alpha$. The candidate extension with presentation $P_9(\lambda)$ is exceptional if and only if $1+4\lambda$ is a non-zero quadratic residue modulo $p$.
\end{proposition}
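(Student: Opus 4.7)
The strategy is to apply Lemma \ref{lemma:exceptionalcondition}, seeking subgroups $K_1, K_2 \le G$ of order $p^3$ whose intersection is a single non-central cyclic subgroup $\langle w\rangle$ of order $p$. By Proposition \ref{prop:ppower} the $p$-th power map on $G$ is a homomorphism, and a short computation using $y^p = n$, $z^p = 1$ gives $\Omega_1(G) = \langle x^p, z, n\rangle$, an elementary abelian subgroup of order $p^3$ that contains $Z(G) = \langle x^p, n\rangle$ with index $p$. Every non-central element of order $p$ therefore lies in $\Omega_1(G) \setminus Z(G)$; replacing $w$ by a suitable power we may assume $\bar w = \bar z$ in $G/Z(G)$, and for constructing subgroups we take $w = z$.

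I would next classify the subgroups $K$ of order $p^3$ containing $z$. Either $K = \Omega_1(G)$, or $K \cap \Omega_1(G)$ has order $p^2$: it cannot be just $\langle z\rangle$, because any element of $K$ of order $p^2$ would then have its $p$-th power in $\langle z\rangle \cap Z(G) = \{1\}$, a contradiction. In the second case, after modifying a generator by powers of $z$, we may assume $K = \langle z, g\rangle$ with $g = x^a y^b$ lying in the abelian subgroup $H = \langle x, y\rangle$. The condition $|\langle z, g\rangle| = p^3$ is equivalent to $[z, g]$ and $g^p$ being linearly dependent in $Z(G) \cong \F_p^2$; computing these using the presentations reduces this to a quadratic condition on $(a,b)$, namely
\[
a^2\lambda - b^2 = 0 \text{ for } P_8(\lambda), \qquad a^2\lambda + ab - b^2 = 0 \text{ for } P_9(\lambda).
\]
Equivalently, $g^p$ is an eigenvector of the linear map $\psi_z$ introduced in the proof of Proposition \ref{prop:P8P9}.

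For $G$ to be exceptional we need two projectively distinct solutions $(a_1, b_1), (a_2, b_2)$; these give $K_i = \langle z, x^{a_i} y^{b_i}\rangle$, and comparison in the normal form $x^I y^J z^K$ confirms $K_1 \cap K_2 = \langle z\rangle$. The number of projective solutions over $\F_p$ is governed by the discriminant of the quadratic: $4\lambda$ for $P_8$ and $1 + 4\lambda$ for $P_9$. Two projectively distinct roots exist precisely when the discriminant is a non-zero quadratic residue modulo $p$, which is exactly the condition claimed in the proposition ($\lambda = 1$ for $P_8$; $\left(\frac{1+4\lambda}{p}\right) = 1$ with $1 + 4\lambda \ne 0$ for $P_9$).

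The main obstacle is the converse direction. When the discriminant is a non-residue the quadratic has no $\F_p$-solution, so $\Omega_1(G)$ is the unique order-$p^3$ subgroup of $G$ containing $z$ and the hypothesis of Lemma \ref{lemma:exceptionalcondition} cannot be met. In the boundary case $1 + 4\lambda \equiv 0 \pmod p$ for $P_9$ the quadratic has a single repeated root, so all the candidate subgroups $\langle z, g\rangle$ of the second type share a common image in $G/Z(G)$ and differ only by a central lift; one must verify that any two of them intersect in a subgroup of order $p^2$ rather than $p$. Finally, pairing $\Omega_1(G)$ with a subgroup $K$ of the second type also fails, since their intersection coincides with $K \cap \Omega_1(G)$, which has order $p^2$.
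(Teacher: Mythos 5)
Your argument is correct and is essentially the paper's own: both reduce exceptionality to the linear map $\psi_z$ on $Z(G)$ having two distinct eigenvalues, i.e.\ to the quadratic $X^2-kX-\ell$ having distinct roots in $\F_p$, which gives exactly the discriminants $4\lambda$ for $P_8(\lambda)$ and $1+4\lambda$ for $P_9(\lambda)$; the paper reaches the eigenvector condition through the unique maximal abelian subgroup $\langle x,y,n\rangle$, while you reach it by classifying the order-$p^3$ overgroups of a non-central element of order $p$, but the content is the same. The two points you leave open are routine and worth closing: first, the generator $g$ can only be normalised to $x^ay^b$ times a central element, and in the converse direction the common element $w$ need only lie in $zZ(G)$ rather than equal $z$; but central factors drop out of both $[g,w]$ and $g^p$ (the $p$-th power map being the endomorphism of Proposition \ref{prop:ppower}), so the quadratic condition and the whole analysis are unchanged. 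Second, in the repeated-root case the two candidate subgroups $K_1,K_2$ have the same image of order $p^2$ in $G/Z(G)$, so $K_1K_2$ lies in the preimage of that image, which has order $p^4$, whence $|K_1\cap K_2|\ge |K_1||K_2|/p^4=p^2$ (alternatively, both $K_i$ contain the central element $x^{a_0p}n^{b_0}$ determined by the repeated root), which is the verification you flagged.
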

\begin{proof}
These are the cases for which the parameters $i$ and $\ell$ are non-zero. Let $H$ be the subgroup form Proposition \ref{prop:abeliansubgroup}, and let $w\in G\setminus H$. We show that $G$ is exceptional if and only if the map $\psi_w$ defined in the proof of Proposition \ref{prop:invariants} is diagonalizable. (This does not depend on the choice of $w$, which affects $\psi_w$ only up to scalar multiplication.) 

Let $G$ be a candidate extension with one of the stated presentations. Let~$K_1$ and~$K_2$ be subgroups of $G$ of order $p^3$ whose intersection has order $p$ and is non-central. Since~$H$ has the structure $C_{p^2}\times C_{p^2}$, we see that $K_1$ and~$K_2$ cannot have exponent $p$. It follows that $K_1\cap H$ and $K_2\cap H$ are cyclic of order~$p$. Let $u_1$ generate $K_1\cap H$ and let $u_2$ generate $K_2\cap H$. Then $u_1^p$ and $u_2^p$ are linearly independent elements of~$V$, considered as a vector space. Choose $w$ to be a generator of $K_1\cap K_2$. Note that $w\notin H$, since the elements of order $p$ in $H$ are central in $G$. Now we observe that both $u_1^p$ and $u_2^p$ must be eigenvectors of the map $\psi_w$. 

Conversely, suppose that $w\in G\setminus H$, and suppose that $u_1$ and $u_2$ are elements of~$H$ such that $u_1^p$ and $u_2^p$ are linearly independent eigenvectors of $\psi_w$. Then it is easy to check that the subgroups $K_1=\langle u_1,w\rangle$ and $K_2=\langle u_2,w\rangle$ satisfy the conditions of Lemma \ref{lemma:exceptionalcondition}, and so $G$ is exceptional. 

Since the map $\psi_w$ is not scalar for any $w\in G\setminus H$, it follows that $G$ is exceptional if and only if the characteristic polynomial of $\psi_w$ has distinct roots in $\F_p$. Taking $w=z$, we see that the characteristic polynomial is $X^2-kX-\ell$, which has distinct roots if and only if $k^2+4\ell$ is a non-zero square in $\F_p$. Now substituting the appropriate values for $k$ and $\ell$ establishes the proposition in all cases.
\end{proof}
The group with presentations $P_8(1)$ appears in Table \ref{table:results} as $E_5$. For some values of~$\lambda$ the group with presentation $P_9(\lambda)$ is exceptional, and appears there as $E_6(\lambda)$, with the appropriate conditions on $\lambda$.

\subsection{Counting exceptional extensions}

\begin{proposition} \label{numberofexceptionalextensionsQ}
The number of groups $G$ which are exceptional extensions of $Q$, up to isomorphism, is $(p+7)/2$.
\end{proposition}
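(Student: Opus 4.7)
The plan is to simply collate the classification results established in the preceding sequence of propositions and add up the isomorphism classes. By Lemma \ref{lemma:parameters}, every candidate extension of $Q$ admits one of the presentations $P_0,\dots,P_7,P_8(\lambda),P_9(\lambda)$ listed in Table \ref{tab:Qparameters}, and by Lemma \ref{lemma:nonisomorphic}, distinct parameter tuples in that table give pairwise non-isomorphic groups. Thus counting exceptional extensions up to isomorphism reduces to counting the entries in the table that correspond to exceptional groups.

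First I would compile the verdicts from Propositions \ref{prop:ExP0P1P2}--\ref{prop:ExP8P9}: the exceptional extensions arise from the presentations $P_1,P_2,P_5,P_7,P_8(1)$, together with those $P_9(\lambda)$ for which $1+4\lambda$ is a non-zero quadratic residue modulo $p$. The presentations $P_0,P_3,P_4,P_6,P_8(\alpha)$ are not exceptional. So the total count equals $5$ plus the number of parameters $\lambda\in\Z_p^\times$ for which $1+4\lambda$ is a non-zero quadratic residue mod $p$.

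The main (and only real) step is the count for the $P_9(\lambda)$ family. The map $\lambda\mapsto 1+4\lambda$ is a bijection from $\Z_p^\times$ onto $\Z_p\setminus\{1\}$, since $p>3$ so $4$ is invertible and $1+4\lambda=1$ iff $\lambda=0$. The set of non-zero quadratic residues in $\Z_p$ has cardinality $(p-1)/2$ and contains $1$, so $\Z_p\setminus\{1\}$ contains exactly $(p-1)/2-1=(p-3)/2$ non-zero quadratic residues. Hence the number of exceptional $P_9(\lambda)$ groups is $(p-3)/2$.

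Adding up, the total number of exceptional extensions of $Q$ up to isomorphism is
\[
5+\frac{p-3}{2}=\frac{p+7}{2},
\]
as claimed. I do not expect any real obstacle here: once Lemmas \ref{lemma:parameters} and \ref{lemma:nonisomorphic} and the case-by-case propositions in Section \ref{s:excextQ} are in hand, the proof is a one-line arithmetic combination of their outputs, the only subtlety being the elementary residue count above.
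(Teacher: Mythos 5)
Your proof is correct and follows essentially the same route as the paper: collate the case-by-case verdicts to get the five sporadic exceptional presentations, then count the $P_9(\lambda)$ family via the observation that $1+4\lambda$ ranges over all residues except $1$, giving $(p-3)/2$, for a total of $(p+7)/2$. Your list $P_1,P_2,P_5,P_7,P_8(1)$ matches the statements of Propositions \ref{prop:ExP0P1P2}--\ref{prop:ExP8P9} (the paper's own proof writes $P_6$ in place of $P_5$, which is a slip inconsistent with Propositions \ref{prop:ExP4P5} and \ref{prop:ExP6P7}), so your version is the accurate collation.
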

\begin{proof}
There is an exceptional extension given by the presentation $P_9(\lambda)$ whenever $\lambda\neq 0$ and $1+4\lambda$ is a quadratic residue for $p$.
Every quadratic residue for $p$ except $1$ is expressible as $1+4\lambda$ for some non-zero $\lambda$, and so there are $(p-3)/2$ such presentations.
The five groups with presentations $P_1$, $P_2$, $P_6$, $P_7$ and $P_8(1)$ are also exceptional, giving the total stated in the proposition.   
\end{proof}

\section{Extensions of $Q_1(p)$ and $Q_\alpha(p)$}\label{s:Q_1}

Let $p$ be a prime greater than $3$, and let $Q_\zeta$ be the group of order $p^4$ given by
\[
Q_\zeta=\langle x,y,z \mid x^{p^2}=y^p= [x,y]=1, z^p=x^{\zeta p},  [x,z] = y, [y,z]=x^{\zeta p} \rangle,
\]
where $\zeta$ is an integer coprime with $p$. The isomorphism class of $Q_\zeta$ depends only on whether or not $\zeta$ is a
quadratic residue modulo $p$. The particular values $\zeta=1$ and $\zeta=\alpha$ give the presentations defining the groups 
$Q_1(p)$ and $Q_\alpha(p)$ respectively. 

A central extension of $Q_\zeta$ by a group of order $p$ has the form
\begin{align}\label{eq:G25extension}
G &=\langle x,y,z,n \mid n^p=1, x^{p^2}=n^h, y^p=n^i, z^p=x^{\zeta p}n^j,\ \textrm{$n$ central}, \nonumber \\
  &\quad \quad  [x,z]=yn^\l, [x,y] = n^{-m}, [y,z]=x^{\zeta p}n^{\zeta m+k}\rangle
\end{align}
for integers $h,i,j,k,\l,m\in\{0,\dots, p-1\}$. The generators of $G$ in this presentation have been so labelled that the images of $x,y,z$, in the quotient of $G$ by its central subgroup $\langle n\rangle$, correspond to the generators $x,y,z$ of $Q_\zeta$; the exponents of $n$ have been chosen for convenience at a later point of the argument. 

We note that $G$ has nilpotency class $3$, and that its derived subgroup has exponent at most $p^2$. So Proposition \ref{prop:ppower} applies, and we have that the $p$-power map on $G$ is an endomorphism. 

It is clear that a subgroup $H$ of $Q_\zeta$ not containing the socle $\langle x^p\rangle$ has order at most~$p$, from which it is clear that $\mu(Q_\zeta)=p^3$. Suppose that $\mu(G)<\mu(Q_\zeta)$; then it is clear that $G$ can have no element of order $p^3$. In particular, the generator $x$ has order $p^2$, and so we must have $h=0$. 

It is easy to see that the centre $Z$ of $G$ is $\langle x^p,n\rangle$, and that $G/Z$ is an extraspecial group with centre $\langle yZ\rangle$.
Any subgroup of $G/Z$ of order greater than $p$ contains $yZ$, and it follows easily that any subgroup of $Z$ of order greater than $p^2$ contains
$y^p$. So~$n^i$ is contained in the kernel of any permutation representation of $G$ of degree less than $p^3$. Hence if $G$ is exceptional, then
$i=0$. 

We may also suppose that $\l=0$, simply by replacing the generator $y$ with $yn^\l$. Thus we may drop three parameters from our notation, and write
$G(j,k,m)$ for the group with the presentation (\ref{eq:G25extension}) with $h=i=\l=0$. These are our \emph{distinguished presentations} in this case, with parameters $(j,k,m)$. We call groups affording such a presentation candidate extensions of $Q_\zeta$; these include all of the
exceptional extensions of $Q_\zeta$.

\begin{proposition} A candidate extension $G$ of $Q_\zeta$ has order $p^5$.
\end{proposition}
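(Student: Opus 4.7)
Since the presentation realises $G$ as a central extension of $Q_\zeta$ (of order $p^4$) by $\langle n\rangle$ with $n^p=1$, we automatically have $|G|\le p^5$; the task is therefore to show the extension is non-degenerate, i.e.\ that $|G|\ge p^5$. I would follow the template of the analogous proposition in Section~\ref{s:Q}: construct an auxiliary group $K$ of known order $p^5$, and then identify a common quotient of $K$ and $G$ to pin down $|G|$.

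Let
\[
L = \langle x,y,n \mid x^p=y^p=n^p=1,\ [x,n]=[y,n]=1,\ [x,y]=n^{-m}\rangle,
\]
a group of order $p^3$ (elementary abelian if $m=0$, extraspecial of exponent $p$ otherwise). I claim the assignment $\phi\colon x\mapsto xy,\ y\mapsto yn^{\zeta m+k},\ n\mapsto n$ extends to an automorphism of $L$ of order dividing $p$. Well-definedness reduces to checking $[\phi(x),\phi(y)]=\phi([x,y])$ and $\phi(x)^p=\phi(y)^p=1$, which hold because $\phi(x)$ and $\phi(y)$ differ from $x$ and $y$ by central elements of $L$ and because $p\mid\binom{p}{2}$ for odd $p$. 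A direct induction (using $[y,n]=1$) then yields
\[
\phi^a(x) = xy^a n^{(\zeta m+k)\binom{a}{2}},\qquad \phi^a(y) = yn^{a(\zeta m+k)},
\]
so $\phi^p=\mathrm{id}_L$. Hence the semidirect product $K = L\rtimes\langle z\rangle$, with $z$ of order $p^2$ acting on $L$ via $\phi$, is a well-defined group of order $p^2\cdot p^3=p^5$.

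Since $\phi^p=\mathrm{id}_L$, the element $z^p$ is central in $K$, whence $z^pn^{-j}$ is central of order $p$. Let $\overline K = K/\langle z^pn^{-j}\rangle$, so $|\overline K|=p^4$. A direct comparison of defining relations shows $\overline K\cong G/\langle x^p\rangle$: imposing $x^p=1$ on $G$ collapses $z^p=x^{\zeta p}n^j$ to $z^p=n^j$ and $[y,z]=x^{\zeta p}n^{\zeta m+k}$ to $[y,z]=n^{\zeta m+k}$, matching the relations of $K$ with $z^p=n^j$ adjoined (the relation $z^{p^2}=1$ of $K$ then becoming redundant). Since the image of $x^p$ in $Q_\zeta$ has order $p$, the element $x^p$ itself has order exactly $p$ in $G$, and hence $|G|=p\cdot|G/\langle x^p\rangle|=p^5$ as required. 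The principal technical obstacle is the verification that $\phi$ is an automorphism of $L$ of order dividing $p$: this requires some care with commutator identities in the possibly non-abelian group $L$ and exploits that $\binom{p}{2}\equiv 0\pmod p$ for odd $p$, but is routine once the central corrections are tracked; the final identification of presentations is then mere bookkeeping.
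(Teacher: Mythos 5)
Your proof is correct and follows essentially the same route as the paper: the same order-$p^3$ group (the paper's $W$, your $L$) with the same order-$p$ automorphism $x\mapsto xy$, $y\mapsto yn^{\zeta m+k}$, then identification of the resulting extension with $G/\langle x^p\rangle$ and the observation that $x^p$ is non-trivial in $G$ because it survives in $Q_\zeta$. Your variant---taking $z$ of order $p^2$, forming $K=L\rtimes\langle z\rangle$ of order $p^5$ and passing to $K/\langle z^pn^{-j}\rangle$, exactly as in the paper's treatment of $Q(p)$ in Section~\ref{s:Q}---is if anything slightly more careful than the paper's own terse writeup of this proposition, since it correctly yields the relation $z^p=n^j$ in the quotient, whereas the presentation displayed in the paper has $z^p=1$, which matches $G/\langle x^p\rangle$ only when $j=0$.
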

\begin{proof} Consider the group 
\[
W=\langle x,y,n \mid x^p=y^p=n^p=[x,n]=[y,n]=1, [x,y]=n^{-m}\rangle.
\]
It is clear that $W$ has order $p^3$ and exponent $p$.
We observe that $[xy,yn^{\zeta m+k}]=[x,y]$ in~$W$, and so there exists an automorphism $z$ of $W$ such that $x^z=xy$ and $y^z=yn^{\zeta m+k}$. It is easy to check that $x^{z^a} = xy^an^{a \choose 2}$ for any integer $a$, and it follows easily that $z$ has order $p$. Now the extension of $W$ by the automorphism $z$
has the presentation
\[
\langle x,y,z,n | x^p=y^p=z^p=n^p=1,\ \textrm{$n$ central}, [x,y]=n^{-m},  [x,z]=y, [y,z]=n^{\zeta m+k}\rangle,
\]
and we see that this is the quotient of $G$ by the subgroup $\langle x^p\rangle$. Since $x^p$ is non-trivial in $Q_\zeta$, it is non-trivial in $G$, and so
$|G|=p|W|=p^5$ as required. \end{proof}

\subsection{Parameters}

If $(x,y,z,n)$ and $(x'y'z'n')$ are the generators in two distinguished presentations of a group $G$, with parameters $(j,k,m)$ and $(j',k',m')$ respectively, then we say that the map $(x,y,z,n)\mapsto(x',y',z'n,')$ induces the change of parameters $(j,k,m)\mapsto(j',k',m')$. 

Some particular generator maps, with the associated change of parameters, are recorded in Table \ref{table:generatormaps25}. It is a routine matter to verify that the map $A(\lambda)$ gives a distinguished presentation with the parameters claimed. For $B(\lambda)$, we observe that
\[
[x^\lambda,x^{\zeta(\lambda-1)}z] = [x^\lambda,z] = y^\lambda n^{m{\lambda \choose 2}},
\]
with the second equality given by an easy induction. So we have $[x',z']=y'$, and it is straightforward to check that the other relations of a distinguished presentation are satisfied with the parameters stated.

\begin{table}
\caption{\label{table:generatormaps25}Some generator maps with the associated changes of parameters.}
\begin{tabular}{llll}\\
Map & $(x',y',z',n')$ & $(j',k',m')$ & Conditions \\ \hline
$A(\lambda)$ & $(x,y,z,n^{\lambda^{-1}})$ & $(\lambda j,\lambda k,\lambda m)$ & $\lambda\ne 0$ \\
$B(\lambda)$ & $(x^\lambda,y^\lambda n^{m{\lambda \choose 2}},x^{\zeta(\lambda-1)}z,n)$ & $(j,\lambda k,\lambda^2 m)$ & $\lambda\ne 0$
\end{tabular}
\end{table}

\begin{proposition}\label{proposition:invariants25}
Let $G$ be a candidate extension. If $G$ has a distinguished presentation with one of its parameters $j,k,m$ equal to $0$, then all of its distinguished presentations have that parameter equal to $0$.
\end{proposition}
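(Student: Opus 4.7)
The plan is to identify, for each of $j$, $k$, $m$, a group-theoretic property of $G$ invariant under isomorphism that holds precisely when the given parameter is zero. By Proposition~\ref{prop:ppower}, the $p$-power map on $G$ is an endomorphism, so
\[
G^p = \langle x^p,\, z^p\rangle = \langle x^p,\, x^{\zeta p} n^j\rangle,
\]
of order $p$ when $j=0$ and of order $p^2$ otherwise; hence $|G^p|$ detects $j$. Similarly
\[
G' = \langle [x,y], [x,z], [y,z]\rangle = \langle n^{-m},\, y,\, x^{\zeta p} n^{\zeta m+k}\rangle,
\]
which equals $\langle y, x^{\zeta p} n^k\rangle$ of order $p^2$ when $m=0$ and expands to $\langle y, n, x^p\rangle$ of order $p^3$ when $m\neq 0$, so $|G'|$ detects $m$.

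For $k$ I would split on $m$. If $m=0$, then $H := \langle x, y, n\rangle$ is an abelian subgroup of order $p^4$, and I claim it is unique. Any abelian subgroup $H'$ of order $p^4$ must contain $Z(G) = \langle x^p, n\rangle$ (otherwise $H'Z(G) = G$ and $G/Z(G)$ would be abelian, contradicting its being extraspecial), and $H'/Z(G)$ is one of the $p+1$ maximal subgroups of $G/Z(G)$, each containing $\langle \bar y\rangle$. Writing such a lift as $\langle y, x^a z^c, x^p, n\rangle$ and computing $[x^a z^c, y] = x^{-\zeta p c}n^{-kc}$, abelianness forces $c=0$ and so $H' = H$. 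Thus $H$ is characteristic, as is $H^p = \langle x^p\rangle$; and since $G'\cap Z(G) = \langle x^{\zeta p} n^k\rangle$, the equality $G'\cap Z(G) = H^p$ holds if and only if $k=0$.

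If $m\neq 0$, then $G' = \langle y, n, x^p\rangle$ is elementary abelian and $\gamma_3(G) = [G,G'] = Z(G)$. I distinguish on $j$. When $j=0$, so $G^p = \langle x^p\rangle$, the kernel $\Omega_1(G)$ of $g\mapsto g^p$ has order $p^4$ and is generated by $\{y, n, x^p, xz^{-\zeta^{-1}}\}$; using that $[y,z]$ is central, the only non-trivial commutator among these generators is $[xz^{-\zeta^{-1}}, y] = x^p n^{k/\zeta}$, so $[\Omega_1, \Omega_1] = \langle x^p n^{k/\zeta}\rangle$ equals the characteristic subgroup $G^p$ if and only if $k=0$. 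When $j\neq 0$, both the $p$-power map $\phi\colon G/G' \to Z(G)$ and the map $\psi\colon G/G'\to Z(G)$ given by $gG'\mapsto [g,y]$ (well defined since $G'$ is abelian and $[G,y]\leq Z(G)$) are isomorphisms; computing in the bases $\{\bar x,\bar z\}$ of $G/G'$ and $\{x^p,n\}$ of $Z(G)$, one finds $\tr(\phi^{-1}\psi) = -k/j$. This trace is well defined only up to scaling by the choice of generator $y$ of $G'/Z(G)$, but its vanishing is an isomorphism invariant and occurs precisely when $k=0$.

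The main obstacle is this last sub-case, where $m$ and $j$ are both non-zero: no single characteristic subgroup directly encodes $k$, so one must combine the $p$-power endomorphism with the class-$3$ commutator structure, viewing them as two characteristic isomorphisms $G/G' \to Z(G)$, and extract $\tr(\phi^{-1}\psi)$ as the required scalar invariant.
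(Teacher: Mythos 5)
Your proof is correct, and it diverges from the paper's in the treatment of the parameter $k$. The arguments for $j$ and $m$ are exactly the paper's: $j=0$ is detected by the order of the image of the $p$-power endomorphism, and $m=0$ by the order of $G'$. For $k$, the paper splits on $j$ rather than on $m$. When $j\ne 0$ its argument is essentially yours: it normalises to $j=1$ and forms the map $\psi_w\colon Z\to Z$, $u^p\mapsto [w,u]$ for $w$ a non-central element of $G'$, which is your $\psi\circ\phi^{-1}$ up to sign and choice of $w$; its trace is a non-zero scalar multiple of $k$, so the vanishing of the trace is the invariant. When $j=0$ the paper instead considers the homomorphism $g\mapsto [w,g]J$ into $Z/J$ (with $J=G^p$) and tests whether its kernel has exponent $p$. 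Your replacements for the cases not reached by the trace argument --- comparing the characteristic subgroups $G'\cap Z(G)$ and $H^p$ when $m=0$ (with $H$ the unique maximal abelian subgroup, whose uniqueness you correctly establish via the computation $[x^az^c,y]=x^{-\zeta pc}n^{-kc}$), and comparing $[\Omega_1(G),\Omega_1(G)]=\langle x^pn^{k/\zeta}\rangle$ with $G^p=\langle x^p\rangle$ when $m\ne 0=j$ --- are sound; I checked the commutator computations and the value $\tr(\phi^{-1}\psi)=-k/j$. A small point in your favour: the paper's $j=0$ criterion degenerates at $m=k=0$, where the map $g\mapsto[w,g]J$ is trivial and its kernel is all of $G$ (of exponent $p^2$), so the stated exponent test does not literally separate $(0,0,0)$ from $(0,1,0)$; your invariants behave uniformly across that case. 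The cost is a three-way case division instead of the paper's two.
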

\begin{proof}
Let $G$ have a distinguished presentation with parameters $j,k,m$. We observe that if $j=0$ then the image $J$ of the endomorphism $g\mapsto g^p$
is the subgroup $\langle x^p\rangle$ of order $p$, whereas if $j\neq 0$ then this image coincides with the centre $Z$, of order $p^2$. If $m=0$ then
the derived group $G'$ is $\langle y,x^{\zeta p}n^k\rangle$ of order $p^2$, whereas if $m\ne 0$ then $G'=\langle y,n,x^p\rangle$ of order $p^3$. Clearly these are invariants of the group $G$. 

For the parameter $k$, let $w$ be any non-central element of $G$ contained in the derived group $G'$. (The generator $y$ in any distinguished presentation for $G$ is such an element.) We consider separately the cases that $j=0$ and  that $j\ne 0$. When $j=0$, the image $J$ of the $p$-power endomorphism
is a central subgroup of order $p$. The map $G\longrightarrow Z/J$ given by $g\mapsto[w,g]J$ is a homomorphism with
kernel $\langle x^{\zeta m+k}z^{-m},y,x^p,n\rangle$. This kernel is independent of the element $w$, and we observe that it has exponent $p$ if and only if
the first of the given generators has order $p$; this occurs if and only if $k=0$.

Suppose next that $j\ne 0$. By applying the generator map $A(1/j)$ from Table~\ref{table:generatormaps25}, we see that we may assume that $j=1$.
We have $J=Z$, and so every central element has a $p$-th root in $G$. We observe that the map
$\psi_w:Z\longrightarrow Z$ given by $\psi_w(u^p)=[w,u]$ is a well defined homomorphism, since $w$ is centralized by the kernel $\langle y,x^p,n\rangle$ of the $p$-power endomorphism. We may consider $Z$ as a $2$-dimensional vector space over~$\F_p$, and now the map $\psi_w$ becomes a linear transformation
of $Z$. Let $t$ be such that $w\in y^tZ$. Then with respect to the basis $(x^p,n)$ for $Z$, the map $\psi_w$ has the matrix
\[
M_t = t\left(\begin{array}{cc} 0&\zeta \\ m&k \end{array}\right).
\]
Now this map has trace $0$ if and only if $k=0$, regardless of the choice of $w$.
\end{proof}

\begin{proposition}\label{proposition:legendre}
Let $G$ have two distinguished presentations, each with parameter $j=1$. Let $m_1$ and $m_2$ be the values for the parameter $m$ in these presentations. Then $\left(\frac{m_1}{p}\right) = \left(\frac{m_2}{p}\right)$, where $\left(\frac{a}{p}\right)$ is the Legendre symbol.
\end{proposition}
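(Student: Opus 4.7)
The strategy is to exhibit the Legendre symbol $\left(\frac{m}{p}\right)$ as an intrinsic invariant of $G$, depending only on its isomorphism type and not on the chosen distinguished presentation. The invariant will be read off from the linear map $\psi_w:Z\to Z$ already constructed in the proof of Proposition \ref{proposition:invariants25}.

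First I would dispose of the degenerate case: if $m_1=0$, then Proposition \ref{proposition:invariants25} forces $m_2=0$ as well, and both Legendre symbols are $0$. So I may assume $m_1m_2\ne 0$.

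Next, for a distinguished presentation with $j=1$ and parameters $(1,k,m)$, and for any non-central $w\in\langle y,Z\rangle$ (recall this is the kernel $\langle y,x^p,n\rangle$ of the $p$-power endomorphism, and hence the largest subgroup on which $\psi_w$ is defined), the formula for the matrix $M_t$ in the proof of Proposition \ref{proposition:invariants25}---where $w\in y^tZ$ with $t\in\F_p^\times$---gives
\[
\det\psi_w \;=\; \det M_t \;=\; -\zeta m t^2.
\]
Because $t^2$ is a nonzero square modulo $p$, the Legendre symbol $\left(\frac{\det\psi_w}{p}\right)$ equals $\left(\frac{-\zeta m}{p}\right)$ and is therefore independent of the choice of $w$. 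The centre $Z$, the kernel of the $p$-power endomorphism (available by Proposition \ref{prop:ppower}), and the commutator bracket are all intrinsic to $G$, so $\psi_w$ and with it the Legendre symbol $\left(\frac{-\zeta m}{p}\right)$ are invariants of the isomorphism class of $G$. Applied to the two given distinguished presentations this yields $\left(\frac{-\zeta m_1}{p}\right)=\left(\frac{-\zeta m_2}{p}\right)$, and dividing by the fixed nonzero constant $\left(\frac{-\zeta}{p}\right)$ gives the claim.

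The main verification is that $\det\psi_w$ is genuinely free of all the auxiliary choices. A change of basis in $Z$ conjugates the matrix of $\psi_w$ and preserves its determinant; replacing $w$ by $w'\in wZ$ does not alter $\psi_w$ at all, since central elements produce trivial commutators; and passing to a different $t\in\F_p^\times$ only scales the matrix by a nonzero scalar, multiplying the determinant by a square. Thus the only residual ambiguity in $\det\psi_w$ is by squares, which is precisely the ambiguity the Legendre symbol suppresses.
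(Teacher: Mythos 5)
Your proposal is correct and follows the same route as the paper: both arguments read the invariant off the map $\psi_w$ from the proof of Proposition \ref{proposition:invariants25}, compute $\det\psi_w=-t^2\zeta m$, and observe that this pins down $m$ modulo nonzero squares independently of the choices made. Your additional remarks (the $m=0$ case and the independence of $\det\psi_w$ from the choice of $w$ and of basis) merely make explicit what the paper leaves implicit.
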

\begin{proof}
By Proposition \ref{proposition:invariants25} we may suppose that $m_1$ and $m_2$ are both non-zero. Since $j=1$, we may construct the transformation
$\psi_w$ from the proof of Proposition \ref{proposition:invariants25}. We see that $\det \psi_w = -t^2\zeta m$, which is independent of $w$ modulo squares in
$\Z_p^{\times}$.
\end{proof}

\begin{proposition}\label{proposition:kparameter}
Let $G$ have two distinguished presentations, each with parameter $j=1$, and with the same non-zero value for the parameter $m$.
Let $k_1$ and $k_2$ be the values for the parameter $k$ in these presentations. Then $k_2=\pm k_1$.
\end{proposition}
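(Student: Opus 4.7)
The plan is to exploit the linear-algebraic invariant $\psi_w \colon Z \to Z$ introduced in the proof of Proposition \ref{proposition:invariants25}. When $j=1$ the $p$-power map is surjective onto $Z$, and for $w \in G'$ the formula $\psi_w(u^p)=[w,u]$ defines an $\F_p$-linear endomorphism of $Z$. The well-definedness relies on the fact that $G'$ is abelian: since $[y,x]=n^m$ is central one has $[y,x^p]=[y,x]^p=n^{mp}=1$, and $n$ is central, so $G'=\langle y,x^p,n\rangle$ is abelian. Therefore any two $p$-th roots of a given element of $Z$ differ by an element of $G'$, which commutes with $w$. From the matrix calculation at the end of the proof of Proposition \ref{proposition:invariants25}, taken now with $j=1$, one reads off $\tr(\psi_{y_i})=k_i$ and $\det(\psi_{y_i})=-\zeta m$ for $i=1,2$, and these numbers are intrinsic invariants of the pair $(G,y_i)$ rather than of any chosen basis of $Z$.

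The crux of the proof is to compare $\psi_{y_1}$ with $\psi_{y_2}$. Since $m\ne 0$, Proposition \ref{proposition:invariants25} tells us that $G'$ has order $p^3$, so $G'/Z$ has order $p$ and is generated by the image of $y_i$ in either presentation. Consequently $y_2=y_1^t u$ for some $t\in\F_p^\times$ and $u\in Z$. The class of $G$ being $3$ gives $[G',G]\subseteq Z$, so each commutator $[y_1,g]$ is central. A straightforward induction, exploiting this centrality, yields
\[
[y_1^t,g]=[y_1,g]^t
\]
for every integer $t$ and every $g\in G$. Since $u$ is central one also has $[y_1^tu,g]=[y_1^t,g]$, and combining these identities produces the scaling relation $\psi_{y_2}=t\psi_{y_1}$ as linear endomorphisms of $Z$.

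Comparing the traces and determinants of the two sides now gives
\[
k_2 = t\, k_1 \quad\text{and}\quad -\zeta m = t^2(-\zeta m).
\]
Because $\zeta m\not\equiv 0 \pmod{p}$, the second equation forces $t^2=1$, hence $t=\pm 1$, and the first equation then yields $k_2=\pm k_1$, as required. The main obstacle is verifying the scaling identity $\psi_{y_2}=t\psi_{y_1}$: one must confirm that $\psi_w$ is a genuine well-defined invariant of $(G,w)$, and track the effect of passing from $y_1$ to $y_1^t u$ through commutator identities in a class-$3$ group. Once that step is secured, the conclusion is immediate from the invariance of the determinant combined with the scaling behaviour of the trace.
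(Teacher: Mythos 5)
Your proof is correct and takes essentially the same route as the paper: both arguments rest on the invariance of $\tr\psi_w$ and $\det\psi_w$ together with the fact that changing $w$ within $G'\setminus Z$ only rescales $\psi_w$ by the scalar $t$ (already encoded in the matrix $M_t$ of Proposition \ref{proposition:invariants25}), the paper packaging this as the $w$-independence of the ratio $(\tr \psi_w)^2/\det\psi_w=-k^2/(\zeta m)$. Your explicit derivation of $\psi_{y_2}=t\psi_{y_1}$ and of well-definedness simply unpacks what the paper imports from that earlier proof.
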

\begin{proof}
Since $j=1$, we can construct the transformation $\psi_w$ from the proof of Proposition \ref{proposition:invariants25}. Now we see that
\[
\frac{(\tr \psi_w)^2}{\det \psi_w} = -\frac{k^2}{\zeta m},
\]
and so we must have $k_1^2=k_2^2$.
\end{proof}

\begin{lemma}\label{lemma:parameters25}
Let $G$ be a candidate extension of $Q_\zeta$. Let $\alpha$ be a quadratic non-residue modulo $p$. Then $G$ has a distinguished presentation with parameters in Table \ref{tab:Qzetaparameters}.
\end{lemma}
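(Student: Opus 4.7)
The plan is to start from an arbitrary distinguished presentation with parameters $(j,k,m)$ and apply a sequence of generator maps from Table \ref{table:generatormaps25} (and compositions of them) to reduce it to a canonical representative in Table \ref{tab:Qzetaparameters}. By Proposition \ref{proposition:invariants25}, the vanishing or non-vanishing of each of the three parameters is an invariant of the group, so the reduction splits naturally into cases according to which of $j,k,m$ are zero. Recall that $A(\lambda)$ sends $(j,k,m)$ to $(\lambda j,\lambda k,\lambda m)$ and $B(\lambda)$ sends it to $(j,\lambda k,\lambda^2 m)$, both for $\lambda\ne 0$.

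I would first handle the case $j\ne 0$. Applying $A(1/j)$ brings the presentation into the form $(1,k,m)$, and any subsequent use of $B(\lambda)$ preserves $j=1$. If $m=0$, then $B(\lambda)$ merely rescales $k$, and I can reduce to $(1,0,0)$ or $(1,1,0)$. If $m\ne 0$, Proposition \ref{proposition:legendre} shows that the Legendre symbol of $\zeta m$ is an invariant, so I may choose $\lambda$ so that $\lambda^2 m\in\{1,\alpha\}$, the choice being forced by that invariant. Once $m$ has been normalised in this way, $\lambda$ is determined up to sign, and Proposition \ref{proposition:kparameter} says the new value of $k$ is determined up to sign; I normalise by requiring $k\ge 0$. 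This is the source of the constraint $\lambda\ge 0$ in the parameter families corresponding to $F^{(\zeta)}_4(\lambda)$ and $F^{(\zeta)}_5(\lambda)$.

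I would then address the case $j=0$, where both $A(\lambda)$ and $B(\mu)$ now leave $j$ at zero. Their composition yields the two-parameter family $(0,k,m)\mapsto(0,\lambda\mu k,\lambda\mu^2 m)$. If $k\ne 0$ and $m\ne 0$, picking $\mu=k/m$ and then $\lambda=1/(k\mu)$ simultaneously normalises both coordinates to $1$, giving $(0,1,1)$. If $k\ne 0$ and $m=0$, a single rescaling yields $(0,1,0)$. If $k=0$ and $m\ne 0$, applying $A(1/m)$ yields $(0,0,1)$. The remaining case is $(0,0,0)$.

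The main obstacle is really the bookkeeping of the final case $j\ne 0$, $m\ne 0$: here one must carefully combine the invariants from Propositions \ref{proposition:legendre} and \ref{proposition:kparameter} to see both that $m$ can be taken in $\{1,\alpha\}$ and that $k$ cannot in general be reduced further than a sign choice, so the full family indexed by a non-negative integer $\lambda$ must appear in the table. Checking the other cases is routine once the correct composite of $A$ and $B$ is chosen, and matching the resulting list of normalised triples $(j,k,m)$ against Table \ref{tab:Qzetaparameters} then completes the proof.
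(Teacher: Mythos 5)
Your proposal is correct and follows essentially the same route as the paper: the paper's proof simply asserts that the reduction to the listed parameter triples is a routine application of the generator maps $A(\lambda)$ and $B(\lambda)$ from Table~\ref{table:generatormaps25} (with Propositions~\ref{proposition:invariants25}, \ref{proposition:legendre} and \ref{proposition:kparameter} ensuring the list is irredundant), and your case analysis is exactly that routine verification carried out explicitly. The normalisations you choose (rescaling $j$ to $1$, putting $m$ into $\{1,\alpha\}$ via the Legendre-symbol invariant, and fixing the sign of $k$) match the canonical forms in Table~\ref{tab:Qzetaparameters}, so nothing is missing.
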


\begin{table}
\caption{Parameters for candidate extensions of $Q_\zeta$.}\label{tab:Qzetaparameters}
\begin{tabular}{lll}
Presentation & $(j,k,m)$ \\ \hline
$P_0$	& $(0,0,0)$ \\
$P_1$	& $(0,0,1)$ \\
$P_2$	& $(0,1,0)$ \\
$P_3$	& $(1,0,0)$ \\
$P_4$	& $(0,1,1)$ \\
$P_5$	& $(1,1,0)$ \\
$P_6(\lambda)$& $(1,\lambda,1)$ & $\lambda\in\{0,\dots,(p-1)/2\}$)\\
$P_7(\lambda)$& $(1,\lambda,\alpha)$ & $\lambda\in\{0,\dots,(p-1)/2\}$)
\end{tabular}
\end{table}

\begin{proof}
It follows from Propositions \ref{proposition:invariants25}, \ref{proposition:legendre} and \ref{proposition:kparameter} that no two of the sets of
parameters listed give rise to isomorphic groups. It is a routine matter to show that any set of parameters may be transformed into one of the listed sets by means of the generator maps listed in Table~\ref{table:generatormaps25}.
\end{proof}

\subsection{Exceptional extensions}

\begin{proposition}\label{Exceptionalwhenj=0}
Let $G$ be a candidate extension with parameter $j=0$. Then $G$ is exceptional if and only if parameter $k\ne 0$.
\end{proposition}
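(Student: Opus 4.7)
The plan is to follow the template of Section~\ref{s:Q}: first establish an analogue of Lemma~\ref{lemma:exceptionalcondition}, then identify a structural invariant distinguishing exceptional from non-exceptional parameters, and finally exhibit an explicit pair of subgroups witnessing exceptionality in the relevant case.

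Since $Z(G)=\langle x^p,n\rangle$ has rank~$2$, Theorem~\ref{thm:johnson} tells us that a minimal faithful representation of $G$ has exactly two orbits, so $\mu(G)=|G:K_1|+|G:K_2|$ for suitable stabilizers. The condition $\mu(G)<\mu(Q_\zeta)=p^3$ forces $|K_1|=|K_2|=p^3$; and from the trivial-core condition together with Proposition~\ref{prop:ppower} (all $p$-th powers in $G$ lie in $\langle x^p\rangle\subseteq Z$, since $j=0$), the intersection $Y=K_1\cap K_2$ satisfies $Y\cap Z=1$ and has exponent $p$. Then $YZ$ is elementary abelian, and since $G$ has no elementary abelian subgroup of rank $4$ (any such would lie in the $p^4$-element subgroup $\Omega_1(G):=\{g\in G:g^p=1\}$, which is non-abelian by the commutator computation below), we conclude $|Y|=p$. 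Hence $G$ is exceptional if and only if there exist subgroups $K_1,K_2$ of order $p^3$ whose intersection has order $p$ and is non-central.

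The key calculation is $\Omega_1(G)=\langle y,n,x^p,zx^{-\zeta}\rangle$, together with
\[
[y,zx^{-\zeta}] = x^{\zeta p}n^k,
\]
whence $\Omega_1(G)'=\Phi(\Omega_1(G))=\langle x^{\zeta p}n^k\rangle$. It follows that any subgroup $K\le G$ of order $p^3$ contains one of two central subgroups: either $\langle x^p\rangle$, via the $p$-th power of an order-$p^2$ element (if $K\not\subseteq\Omega_1(G)$), or $\langle x^{\zeta p}n^k\rangle$, as $K$ is then a maximal subgroup of $\Omega_1(G)$ and so contains $\Phi(\Omega_1(G))$. When $k=0$ these two alternatives coincide in $\langle x^p\rangle$, so every subgroup of order $p^3$ contains the central element $x^p$; hence $K_1\cap K_2\supseteq\langle x^p\rangle$ always, and the criterion above shows that $G$ is not exceptional.

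For the direction $k\ne 0$, I would exhibit the pair explicitly. Take $K_2=\langle y,zx^{-\zeta}\rangle$: this is a Heisenberg-type group of order $p^3$ inside $\Omega_1(G)$, with $K_2\cap Z=\langle x^{\zeta p}n^k\rangle$. For $K_1$, I look for $\langle y,u\rangle$ with $u$ of order $p^2$ and $[u,y]\in\langle x^p\rangle$, so that $K_1\cap Z=\langle x^p\rangle$. Writing $u=x^az^c$, a direct expansion gives $[u,y]=x^{-\zeta pc}n^{-am-c(\zeta m+k)}$, so the required condition on the $n$-exponent is $am+c(\zeta m+k)\equiv 0\pmod p$; this is satisfied by $u=x$ when $m=0$, by $u=xz^s$ with $s=-m(\zeta m+k)^{-1}$ when $\zeta m+k\ne 0$, and by $u=z$ in the remaining case $\zeta m+k=0$ (where necessarily $m\ne 0$). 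Since $k\ne 0$, the subgroups $\langle x^p\rangle$ and $\langle x^{\zeta p}n^k\rangle$ are linearly independent in $Z\cong\F_p^2$, giving $K_1\cap K_2\cap Z=1$; and the condition that $u$ has order $p^2$ translates to $(a,c)$ and $(-\zeta,1)$ being linearly independent in $(G/Z)/\langle yZ\rangle\cong\F_p^2$, so the two planes $K_iZ/Z$ in the extraspecial quotient $G/Z$ meet only in the centre $\langle yZ\rangle$. Hence $K_1\cap K_2=\langle y\rangle$, which is non-central (as $[y,z]=x^{\zeta p}n^{\zeta m+k}\ne 1$), and $G$ is exceptional. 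The main technical obstacle lies in the three-way case analysis for choosing $u$; conceptually the argument is driven by the identification of $\Omega_1(G)'=\langle x^{\zeta p}n^k\rangle$ as the structural invariant discriminating the exceptional from the non-exceptional case.
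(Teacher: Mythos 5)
Your proposal is correct and takes essentially the same approach as the paper: both arguments rest on the two-orbit criterion, the observation that the kernel of the $p$-power map is a non-abelian subgroup of order $p^4$ forcing every order-$p^3$ subgroup to contain either $\langle x^p\rangle$ or $\langle x^{\zeta p}n^k\rangle$ (the paper phrases this via the factorization of that kernel as an extraspecial group times $\langle n\rangle$, you via its Frattini subgroup), and an explicit witnessing pair when $k\neq 0$. The only cosmetic difference is that the paper's single subgroup $\langle x^{-(\zeta m+k)}z^m,\,y\rangle$ covers uniformly the three cases you treat separately in choosing $u$.
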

\begin{proof}
The group $G$ is exceptional if and only if has subgroups $H_1$ and $H_2$, each of order $p^3$, such that the intersection
$H_1\cap H_2\cap Z$ is trivial. 

The image $J$ of the $p$-power endomorphism is generated by $x^p$. It follows that any subgroup of $G$ not containing $x^p$ has exponent $p$. Now the
kernel of the $p$-power endomorphism is $K=\langle y,x^{-\zeta}z,x^p,n\rangle$, which factorizes as
\[
\langle y,x^{-\zeta}z,x^{\zeta p}n^k\rangle\times \langle n\rangle.
\]
Let $L$ be the factor $\langle y,x^{-\zeta}z,x^{\zeta p}n^k\rangle$. Then $L$ is extraspecial, with centre $\langle x^{\zeta p}n^k\rangle$.
Let~$H$ be a subgroup of $K$ of order $p^3$. Then $|H\cap L|\ge p^2$, and so $x^{\zeta p}n^k\in H$. It follows that if $k=0$, then every subgroup of $G$ of order $p^3$ contains the central element $x^p$, and hence $G$ is not exceptional.

Conversely, if $k\ne 0$, then it is easily checked that the subgroups
\[
H_1= \langle y,x^{-\zeta}z, x^{\zeta p}n^k\rangle,\quad H_2=\langle x^{-(\zeta m+k)}z^m, y \rangle,
\]
both have order $p^3$, and their intersection is the non-central subgroup $\langle y\rangle$. Hence $G$ is exceptional in this case.
\end{proof}
Proposition \ref{Exceptionalwhenj=0} tells is that the group with presentations $P_2$ and $P_4$ are exceptional. Taking $\zeta$ to be either $1$ or $\alpha$, these groups appear in Table \ref{table:results} as $F^{(\zeta)}_1$ and $F^{(\zeta)}_2$ respectively.

\begin{proposition}\label{Exceptionalwhenjnot0}
Let $G$ be a candidate extension with parameter $j=1$, and parameters $k, m$. Then $G$ is exceptional if and only if $k^2+4\zeta m$ is a non-zero quadratic residue modulo $p$.
\end{proposition}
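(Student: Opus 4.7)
The strategy mirrors the proof of Proposition \ref{prop:ExP8P9}: I will show that $G$ is exceptional if and only if the linear map $\psi_w : Z \to Z$ defined in the proof of Proposition \ref{proposition:invariants25} is diagonalizable over $\F_p$ for some (equivalently every) $w \in y^t Z$ with $t \neq 0$. Since the matrix $M_t$ has characteristic polynomial $X^2 - tkX - t^2 \zeta m$ with discriminant $t^2(k^2 + 4\zeta m)$, and since $M_t$ is manifestly non-scalar for $t \neq 0$ (as $\zeta \neq 0$), this diagonalizability is equivalent to the characteristic polynomial having two distinct roots in $\F_p$, equivalently to $k^2 + 4\zeta m$ being a nonzero quadratic residue modulo $p$.

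Because $Z(G) = Z$ has rank $2$, Theorem \ref{thm:johnson} reduces exceptionality of $G$ to the existence of subgroups $K_1, K_2$ of order $p^3$ with $K_1 \cap K_2 \cap Z = 1$ (the condition on $Z$ being equivalent to the triviality of the intersection of cores, since $G$ is a $p$-group). I will establish that any such pair satisfies $|K_i \cap Z| = p$, with distinct cyclic subgroups $\langle z_i \rangle = K_i \cap Z$ of $Z$, and moreover that $|K_1 \cap K_2| = p$ with $K_1 \cap K_2 = \langle w \rangle$ for some $w \in y^t Z \setminus Z$. The first point rests on the fact that every subgroup of order $p^3$ meets $Z$ non-trivially: if $K \cap Z = 1$, then the $p$-th power of any element of order $p^2$ in $K$ would lie in $K \cap Z = 1$, so $K$ would have exponent $p$ and hence lie in $\Phi(G) = \langle x^p, y, n \rangle$ (which is elementary abelian of order $p^3$), forcing $K = \Phi(G) \supseteq Z$, a contradiction. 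The location of $w$ then follows from the observations that every element of $K_1 \cap K_2$ has order dividing $p$ (otherwise its $p$-th power would be a nontrivial element of $K_1 \cap K_2 \cap Z$), hence lies in $\Phi(G)$, and that $|K_1 \cap K_2| = p$ rather than $p^2$ by a dimension count inside $\Phi(G) \cong \F_p^3$.

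Granted this structural picture, both directions follow. For the forward direction, each $K_i$ is not of exponent $p$, hence contains an element $u_i$ of order $p^2$ which (after rescaling) satisfies $u_i^p = z_i$; then $u_i, w \in K_i$ gives $[u_i, w] \in K_i$, and a direct computation using that $[u_i, y] \in Z$ is central yields $[u_i, w] = [u_i, y]^t \in Z$, so $[u_i, w] \in K_i \cap Z = \langle z_i \rangle$. Hence each $z_i$ is an eigenvector of $\psi_w$ with eigenvalue in $\F_p$; since $z_1, z_2$ are linearly independent, $\psi_w$ is diagonalizable and the discriminant condition holds. For the converse, take $w = y$, pick linearly independent eigenvectors $z_1, z_2 \in Z$ of $\psi_w$, lift each to $u_i \in G$ with $u_i^p = z_i$ via the surjectivity of the $p$-power endomorphism onto $Z$, and set $K_i = \langle u_i, w \rangle$. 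A routine check using $[u_i, w] \in \langle z_i \rangle$, $w^p = 1$, and $u_i^{p^2} = 1$ shows that $|K_i| = p^3$ and $K_i \cap Z = \langle z_i \rangle$, yielding $K_1 \cap K_2 \cap Z \subseteq \langle z_1 \rangle \cap \langle z_2 \rangle = 1$.

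The main obstacle is the structural analysis in the second paragraph, particularly pinning down that the distinguished element $w \in K_1 \cap K_2$ lies in $\Phi(G) \setminus Z$ and hence takes the form $y^t z^*$ with $t \neq 0$; this is what aligns the witnessing subgroups with the eigenvector decomposition of $\psi_w$. Once this is in place, the commutator identities reduce the argument to the linear-algebraic form already used in the proof of Proposition \ref{prop:ExP8P9}.
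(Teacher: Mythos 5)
Your proof is correct and takes essentially the same route as the paper: both reduce exceptionality to the existence of two subgroups of order $p^3$ whose intersections with $Z$ are distinct lines, identify those lines as eigenvectors of the map $\psi_w$ from Proposition \ref{proposition:invariants25}, and conclude from the discriminant $k^2+4\zeta m$ of the non-scalar matrix $M_t$. The additional structural details you spell out (locating $w$ in $y^tZ$ with $t\neq 0$, the dimension count inside $\langle x^p,y,n\rangle$) are points the paper leaves implicit rather than a genuinely different argument.
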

\begin{proof}
Let $H_1$ and $H_2$ be subgroups of $G$ of order $p^3$, such that $H_1\cap H_2\cap Z$ is trivial. It is clear that $H_1\cap H_2$ has exponent $p$ (since the image $J$ of the $p$-power endomorphism is central). Since $j\ne 0$, the kernel of the $p$-power endomorphism is $\langle Z,y\rangle$, and so
$H_1\cap H_2$ contains an element $w\in yZ$. We may now construct the map $\psi_w$ from the proof of Proposition \ref{proposition:invariants25}. (The value of $t$ is quite unimportant, but as it happens our choice of $w$ ensures that $t=1$.) 

It is clear, since any subgroup of $G$ of order $p^3$ intersects the centre non-trivially,
that each of $H_1\cap Z$ and $H_2\cap Z$ must have order $p$. Let these intersections be generated by $u_1$ and $u_2$ respectively.
In particular, neither $H_1$ nor $H_2$ can be contained in the kernel of the $p$-power endomorphism, and so each has exponent $p^2$. It follows that
$u_1$ and $u_2$ have $p$-th roots $v_1\in H_1$ and $v_2\in H_2$ respectively. It follows from the definition of the map $\psi_w$
that $\psi_w(u_1)=[w,v_1]\in H_1\cap Z$, and  $\psi_w(u_2)=[w,v_2]\in H_2\cap Z$. Hence each of $u_1$ and $u_2$ is an eigenvector of the map $\psi_w$.

As a converse, we note that if $\psi_w$ has linearly independent eigenvectors $u_1$ and $u_2$, with $p$-th roots  $v_1$ and $v_2$ in $G$ respectively,
then the subgroups $H_1=\langle v_1,w\rangle$ and $H_2=\langle v_2,w\rangle$ both have order $p^3$, and their intersection $\langle w\rangle$ is non-central. Hence $G$ is exceptional if and only if $\psi_w$ is diagonalizable. The proposition now follows from the observations that the
discriminant of $\psi_w$ is $k^2+4\zeta m$, and that since $\psi_w$ is not scalar, a discriminant of $0$ implies that $\psi_w$ is not diagonalizable.
\end{proof}
Let $\zeta\in\{1,\alpha\}$. The presentation $P_5$ gives an exceptional group, which appears in Table \ref{table:results} as $F^{(\zeta)}_3$.
For suitable values of $\lambda$ the groups with presentations $P_6(\lambda)$ or $P_7(\lambda)$ are exceptional; these appear
as $F^{(\zeta)}_4(\lambda)$ and $F^{(\zeta)}_5(\lambda)$ respectively, with the appropriate conditions on $\lambda$.

\subsection{Counting exceptional extensions}

\begin{proposition} \label{numberofexceptionalextensionsQ_1}
The number of groups $G$ which are exceptional extensions of $Q_\zeta$, up to isomorphism, is $(p+5)/2$.
\end{proposition}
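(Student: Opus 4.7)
The plan is to enumerate and then count the exceptional extensions by running through the irredundant list of presentations supplied by Lemma~\ref{lemma:parameters25} and applying the two criteria of Propositions~\ref{Exceptionalwhenj=0} and~\ref{Exceptionalwhenjnot0}. First I would dispose of the presentations with $j=0$: Proposition~\ref{Exceptionalwhenj=0} says exceptional means $k\neq 0$, so from $\{P_0,P_1,P_2,P_4\}$ only $P_2$ and $P_4$ survive. For presentations with $j=1$, Proposition~\ref{Exceptionalwhenjnot0} requires $k^2+4\zeta m$ to be a nonzero quadratic residue modulo $p$. This immediately retains $P_5$ (discriminant $1$) and discards $P_3$ (discriminant $0$). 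Thus three exceptional presentations come for free, and the remaining count reduces to the two parametric families $P_6(\lambda)$ and $P_7(\lambda)$.

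The substance of the proof is therefore the counting problem: for $\lambda$ ranging over $\{0,1,\dots,(p-1)/2\}$, how many satisfy the conditions $\left(\frac{\lambda^2+4\zeta}{p}\right)=1$ and $\left(\frac{\lambda^2+4\zeta\alpha}{p}\right)=1$ respectively. The key observation is that $4$ is a square and $\alpha$ is a nonresidue, so the two constants $c_1=4\zeta$ and $c_2=4\zeta\alpha$ lie in opposite square classes of $\F_p^\times$, regardless of whether $\zeta=1$ or $\zeta=\alpha$. Hence one family is counted by $N^+(c_1)$ with $c_1$ a nonzero square, and the other by $N^+(c_2)$ with $c_2$ a nonzero nonsquare, where
\[
N^+(c)=\bigl|\{\lambda\in\{0,\dots,(p-1)/2\}:\lambda^2+c\text{ is a nonzero square in }\F_p\}\bigr|.
\]

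To evaluate $N^+(c)$ I would use the standard identity $\sum_{\lambda\in\F_p}\chi(\lambda^2+c)=-1$ for $c\neq 0$ (where $\chi$ is the Legendre character with $\chi(0)=0$), together with the number of zeros of $\lambda^2+c$, namely $1+\chi(-c)$. This yields $N(c)=(p-1-Z(c))/2$ where $Z(c)\in\{0,2\}$ records the zeros. Folding by the involution $\lambda\mapsto-\lambda$ and accounting for the fixed point $\lambda=0$, which contributes to $N(c)$ precisely when $c$ is a nonzero square, converts $N(c)$ into $N^+(c)$. A short case split on whether $-1$ is a square in $\F_p$ then shows that the sum $N^+(c_1)+N^+(c_2)$ collapses to $(p-1)/2$ in both parities of $p\bmod 4$. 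Adding the three ``unconditional'' presentations $P_2$, $P_4$, $P_5$ gives
\[
3+\frac{p-1}{2}=\frac{p+5}{2}
\]
exceptional extensions of $Q_\zeta$, as required.

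The only delicate point is the bookkeeping in the folding step, where two independent parities interact: the character of $c$ controls whether $\lambda=0$ contributes, and the character of $-1$ controls $Z(c)$. The essential reason the answer is independent of $\zeta$ and of $p\bmod 4$ is precisely that $c_1$ and $c_2$ sit in opposite square classes, so the residue/nonresidue roles swap between the two families and the parities cancel. Once this is set up carefully, no further combinatorial input is needed.
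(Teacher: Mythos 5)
Your proposal is correct and arrives at the right total, following the paper's overall decomposition exactly: $P_2$, $P_4$, $P_5$ are exceptional, $P_0$, $P_1$, $P_3$ are not, and everything reduces to counting exceptional members of the two families $P_6(\lambda)$, $P_7(\lambda)$. Where you differ is in how that count is carried out. The paper parametrizes solutions of $r^2-\lambda^2=4\zeta\mu$ by ordered factorizations of $4\zeta\mu$ into distinct factors (so $p-3$ or $p-1$ solutions according to the square class of $4\zeta\mu$), assembles the set $T$ over $\mu\in\{1,\alpha\}$ of size $2p-4$, and then quotients by the sign action $(r,\lambda)\mapsto(\pm r,\pm\lambda)$, treating the two $\lambda=0$ elements separately, to obtain $(p-1)/2$. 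You instead count, for each constant $c\in\{4\zeta,4\zeta\alpha\}$, the values $\lambda$ with $\lambda^2+c$ a nonzero square via the character-sum identity $\sum_{\lambda\in\F_p}\chi(\lambda^2+c)=-1$, and then fold by $\lambda\mapsto-\lambda$; since $4\zeta$ and $4\zeta\alpha$ lie in opposite square classes, $\chi(-4\zeta)+\chi(-4\zeta\alpha)=0$, so $Z(c_1)+Z(c_2)=2$ and the combined count is $(p-1)/2$ with no case split on $p\bmod 4$ actually needed (your proposed split on $\chi(-1)$ only decides which of the two $Z$'s is $2$, not their sum). Both arguments rely on the same inputs from the paper, namely the irredundancy of the parameter list (Lemma \ref{lemma:parameters25}) and the exceptionality criteria (Propositions \ref{Exceptionalwhenj=0} and \ref{Exceptionalwhenjnot0}); the paper's factorization count is entirely elementary and self-contained, while your character-sum route is shorter once the standard identity is granted and makes the independence of the answer from $\zeta$ and from $p\bmod 4$ transparent.
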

\begin{proof}
From Propositions \ref{Exceptionalwhenj=0} and \ref{Exceptionalwhenjnot0} we see that the presentations $P_2$, $P_4$ and $P_5$ give exceptional extensions, whereas $P_0$, $P_1$ and $P_3$ do not. It remains to deal with the presentations $P_6(\lambda)$ and $P_7(\lambda)$. 

Working in the set $\Z_p$ of integers modulo $p$, let $t$ be non-zero, and let \[
S_t:=\{(r,\lambda) \mid r\neq 0, t=r^2-\lambda^2\}.
\]
Then it is clear that $|S_t|$ is the number of (ordered) factorizations of $t$ as a product of distinct elements of $\Z_p$.
This number is $p-3$ if $t$ is a square, and $p-1$ otherwise. Now define
\[
T=\{(\mu,r,\lambda) \mid \mu\in\{1,\alpha\}, r\neq 0, r^2-\lambda^2=4\zeta \mu \}.
\]
Since $4\zeta$ and $4\zeta\alpha$ are non-zero, and  exactly one of them is a square, we see that $|T|=2p-4$. Each element of $T$ yields a distinguished presentation with parameters $(j,k,m) = (1,\pm\lambda,\mu)$ (where the sign in the second parameter is chosen to put it in the range $\{0,\dots,\frac{p-1}{2}\}$). Since the discriminant $k^2+4\zeta m$ for this presentation is $r^2$, Proposition
\ref{Exceptionalwhenjnot0} tells us that the corresponding extension is exceptional; moreover it is clear that all the
exceptional extensions with presentations $P_6(\lambda)$ or $P_7(\lambda)$ arise in this way. Now two elements $(\mu,r,\lambda)$ and $(\mu',r',\lambda')$ of $T$ yield the same presentation if and only if $\mu=\mu'$, $r=\pm r'$ and $\lambda=\pm\lambda'$. Exactly two elements of $T$ have
$\lambda=0$. So the number of exceptional extensions arising from $T$ is $1+(|T|-2)/4$, which is $(p-1)/2$. Together with the three exceptional extensions
already found, this makes up the number $(p+5)/2$ stated in the proposition.
\end{proof}

\subsection{Isomorphisms of exceptional extensions of $Q_1(p)$ and $Q_\alpha(p)$}\label{s:isomorphisms}

Recall that~$\alpha$ is a quadratic non-residue modulo $p$. In this section we establish the following fact. 
\begin{proposition}\label{prop:isomorphisms}
Let $G$ be a group of order $p^5$. Then $G$ is an exceptional extension of $Q_\alpha$ if and only if it is an exceptional extension of $Q_1$.
\end{proposition}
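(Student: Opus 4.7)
The plan is to exhibit, for any exceptional extension $G$ of $Q_\zeta$ (with $\zeta \in \{1, \alpha\}$), a central subgroup $N' \leq Z(G)$ of order $p$ such that $G/N' \cong Q_{\zeta'}$, where $\zeta'$ is the other member of $\{1, \alpha\}$. By Theorem \ref{thm:johnson} the centre of an exceptional $G$ of order $p^5$ has rank $2$, and from the distinguished presentation (\ref{eq:G25extension}) we have $Z(G) = \langle x^p, n\rangle$, so there are exactly $p+1$ central subgroups of order $p$: namely $\langle x^p\rangle$ and $\langle x^{cp} n\rangle$ for $c \in \F_p$. Since $\mu(Q_{\zeta'}) = p^3 > \mu(G)$, any such quotient isomorphic to $Q_{\zeta'}$ is automatically a distinguished quotient.

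In the easier case where the parameter $m$ vanishes, substituting $n = x^{-cp}$ in the defining relations gives the quotient $G/\langle x^{cp}n\rangle$ the presentation
\[
\langle x, y, z \mid x^{p^2} = y^p = 1,\ z^p = x^{(\zeta - cj)p},\ [x,y] = 1,\ [x,z] = y,\ [y,z] = x^{(\zeta - ck)p}\rangle.
\]
After the generator change $z \mapsto z x^{c(j-k)}$, this is the standard presentation of $Q_{\zeta - ck}$ whenever $\zeta - ck \neq 0$. Since $G$ is exceptional and $m = 0$ forces $k \neq 0$ (by Propositions \ref{Exceptionalwhenj=0} and \ref{Exceptionalwhenjnot0}), the map $c \mapsto \zeta - ck$ is a bijection on $\F_p$, so its image contains elements of $\F_p^\times$ of both quadratic residue and non-residue type; the resulting quotients therefore include groups isomorphic to both $Q_1(p)$ and $Q_\alpha(p)$.

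The main obstacle is the case $m \ne 0$: here substituting $n = x^{-cp}$ for $c \ne 0$ yields a presentation in which $[x, y] = x^{cmp} \ne 1$, so the quotient is not in the standard form of any $Q_{\zeta'}$ on the nose. I would address this by exploiting the invariant classification of candidate extensions. Propositions \ref{proposition:invariants25}--\ref{proposition:kparameter} show that the isomorphism class of a candidate extension with $j = 1$ is determined by the class of the discriminant $k^2 + 4\zeta m$ in $\F_p^\times / (\F_p^\times)^2$, together with the value of $k$ up to sign. Matching these invariants across $\zeta = 1$ and $\zeta = \alpha$ produces the expected pairings $F^{(1)}_i \leftrightarrow F^{(\alpha)}_i$ for $i \in \{1,2,3\}$, together with $F^{(1)}_4(\lambda) \leftrightarrow F^{(\alpha)}_5(\lambda)$ and $F^{(1)}_5(\lambda) \leftrightarrow F^{(\alpha)}_4(\lambda)$; each paired isomorphism can then be verified by an explicit change of generators of the form $x \mapsto x^\mu$, $y \mapsto y^\beta n^\gamma$, $z \mapsto z^\nu x^\delta$, with Proposition \ref{prop:ppower} ensuring that $p$-th powers behave linearly for $p > 3$.
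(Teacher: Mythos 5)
Your handling of the case $m=0$ is correct, and it is genuinely different from the paper's route: rather than constructing an isomorphism with some extension of the other quotient, you locate a second central subgroup $\langle x^{cp}n\rangle$ whose quotient is $Q_{\zeta-ck}$, and use the fact that exceptionality with $m=0$ forces $k\neq 0$ so that $c\mapsto \zeta-ck$ reaches both square classes. This disposes of $F^{(\zeta)}_1$ and $F^{(\zeta)}_3$ cleanly, and, being symmetric in $\zeta$, would not even require the counting step (Proposition \ref{numberofexceptionalextensionsQ_1}) that the paper uses to get the converse implication; the computation $(zx^{c(j-k)})^p=z^p x^{c(j-k)p}$ is legitimate by Proposition \ref{prop:ppower} since $p>3$.

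However, the case $m\neq 0$, which contains $F^{(\zeta)}_2$ and both infinite families $F^{(\zeta)}_4(\lambda)$, $F^{(\zeta)}_5(\lambda)$ --- that is, almost all of the exceptional extensions --- is not actually proved. Propositions \ref{proposition:invariants25}, \ref{proposition:legendre} and \ref{proposition:kparameter} only show that certain quantities agree between two distinguished presentations \emph{of the same group over a fixed $Q_\zeta$}; they are necessary conditions used to make Table \ref{tab:Qzetaparameters} irredundant, and they provide no sufficiency and no mechanism for producing an isomorphism between an extension of $Q_1$ and an extension of $Q_\alpha$, which is exactly what has to be established. The invariant you propose to match is also incorrect: ``$k$ up to sign'' is not preserved across the two families (the true pairing $F^{(1)}_4(\lambda)\cong F^{(\alpha)}_5(\lambda)$ has $k=\lambda$ on one side and $k=\alpha\lambda$ on the other), and even for fixed $\zeta$ the pair consisting of the square class of $k^2+4\zeta m$ and $\pm k$ does not determine the group, since $P_6(\lambda)$ and $P_7(\lambda)$ can share both while being non-isomorphic by Proposition \ref{proposition:legendre}. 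Finally, the deferred ``explicit change of generators'' of the shape $x\mapsto x^\mu$, $y\mapsto y^\beta n^\gamma$, $z\mapsto z^\nu x^\delta$ is too restrictive: for the group with parameters $(0,1,1)$ the paper's isomorphism needs $\tilde{x}=x^az^b$ with $b\neq 0$ (built from a solution of $t^2-b^2=4\alpha$), and for $j=1$ the paper does not guess generators at all but rewrites $G$ on $s,t,w$ with $s,t$ being $p$-th roots of independent eigenvectors of $\psi_w$, so that the resulting presentation depends on $\zeta$ and $m$ only through $\zeta m$ (Proposition \ref{Isomorphismswhenj=1}), with the reverse implication supplied by the equality of counts. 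Some argument of this kind (or a genuine completeness-of-invariants proof) is needed to close your main case; as written it asserts the desired pairing rather than proving it.
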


We  show directly that every exceptional extension of $Q_\alpha$ is isomorphic to
an exceptional extension of $Q_1$. Since $Q_\alpha$ and $Q_1$ have the same number of exceptional extensions by
Proposition~\ref{numberofexceptionalextensionsQ_1}, it follows that the converse also holds.

\begin{proposition}\label{Isomorphismswhenj=1}
Let $G$ be an exceptional extension of $Q_\alpha$ affording a distinguished presentation with parameters $(j,k,m)$, where $j=1$. Then $G$ is an exceptional extension of $Q_1$.
\end{proposition}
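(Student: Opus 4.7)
My approach is to construct an explicit distinguished presentation of $G$ as an extension of $Q_1$ by a change of generators; once such a presentation is available, the resulting extension is automatically exceptional, since $\mu(G) < p^{3} = \mu(Q_{1})$. Throughout, I would view the centre $Z(G) = \langle x^{p}, n\rangle$ as a $2$-dimensional $\F_{p}$-vector space with ordered basis $(x^{p}, n)$, and track the central parts of $p$-th powers and commutators as elements of $\F_{p}^{2}$.

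For scalars $\sigma, b, e \in \F_{p}$ to be determined, I propose to set
\[
\tilde x = x^{\sigma} z^{b}, \qquad \tilde z = z x^{e}, \qquad \tilde y = [\tilde x, \tilde z], \qquad \tilde n = \tilde z^{p}\,(\tilde x^{p})^{-1}.
\]
A direct calculation using Proposition \ref{prop:ppower} and the class-$3$ commutator identities gives $\tilde x^{p} = x^{(\sigma + \alpha b) p} n^{b}$, $\tilde z^{p} = x^{(\alpha + e)p} n$, and $\tilde y = y^{\sigma - be}\cdot C$ for some central element $C$. By construction, $\tilde n$ lies in $Z(G)$ and $\tilde z^{p} = \tilde x^{p}\tilde n$, so the new presentation automatically has $\tilde\zeta = 1$ and $\tilde j = 1$; the pair $(\tilde x^{p}, \tilde n)$ is a basis of $Z(G)$ precisely when $\sigma - be \neq 0$, which is also what is needed for $\tilde y$ to generate a subgroup of order $p$ of the correct type. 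The heart of the matter is then to force the two remaining coefficient conditions: in the basis $(\tilde x^{p}, \tilde n)$, the $\tilde x^{p}$-component of $[\tilde x,\tilde y]$ must vanish (so that $[\tilde x,\tilde y]\in\langle\tilde n\rangle$), and the $\tilde x^{p}$-component of $[\tilde y,\tilde z]$ must equal $1$. Computing $[\tilde x, y]$ and $[\tilde y, \tilde z]$ as explicit central elements, one obtains two scalar equations in $\sigma, b, e$, polynomial of low degree, whose coefficients depend on $k, m$ and $\alpha$. In the generic case $m \neq 0$ and $\alpha(1+m)+k \neq 0$, one can solve first for $b$ in terms of a free $\sigma$ and then for $e$, and verify that the resulting data satisfies $\sigma - be \neq 0$.

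The main obstacle I anticipate is handling the degenerate parameter regimes cleanly. The case $m = 0$ corresponds to the presentation $P_{5}$ in Table~\ref{tab:Qzetaparameters}, where $[x,y] = 1$ already holds in $G$; here I would instead compute the quotient $G/\langle x^{p}n^{t}\rangle$ directly, showing that it presents as $Q_{\zeta}$ with $\zeta = \alpha - 1/t$, and choose $t$ so that $\alpha - 1/t$ is a quadratic residue (which is available, since $\alpha$ is the unique value excluded from the image). The subtler sub-case is $\alpha(1+m)+k = 0$; here one allows additionally $\tilde z = x^{e}z^{\tau}$ with $\tau \neq 1$, which introduces an extra scalar and a linear correction to both coefficient equations, and the exceptionality hypothesis that $k^{2} + 4\alpha m$ is a non-zero quadratic residue is what ultimately guarantees that the enlarged system remains solvable in $\F_{p}$. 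With valid parameters in hand, the defining relations $\tilde x^{p^{2}} = \tilde y^{p} = \tilde n^{p} = 1$, $[\tilde x, \tilde z] = \tilde y$, and centrality of $\tilde n$ are then immediate from the construction and the $p$-power endomorphism, completing the desired presentation of $G$ as an exceptional extension of $Q_{1}$.
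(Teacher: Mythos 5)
Your proposal is correct in its essentials, but it takes a genuinely different route from the paper. The paper's proof never solves for a new presentation by brute force: it picks a non-central $w$ of order $p$, uses exceptionality (via Proposition \ref{Exceptionalwhenjnot0}) to diagonalize the map $\psi_w$ of Proposition \ref{proposition:invariants25}, takes $p$-th roots $s,t$ of the two eigenvectors (possible because $j=1$ makes the $p$-power endomorphism surject onto $Z(G)$), and lands on the symmetric presentation $\langle s,t,w \mid s^{p^2}=t^{p^2}=w^p=1,\ [s,t]=w,\ [w,s]=s^{p\gamma},\ [w,t]=t^{p\delta}\rangle$, which depends only on the eigenvalues $\gamma,\delta$. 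Since the characteristic polynomial $X^2-kX-\zeta m$ of $\psi_w$ depends on $\zeta$ only through the product $\zeta m$, the extension of $Q_\alpha$ with parameters $(1,k,m)$ and the extension of $Q_1$ with parameters $(1,k,\alpha m)$ share this presentation, and the latter is exceptional again by Proposition \ref{Exceptionalwhenjnot0}. Your construction instead produces an explicit change of generators, which is more informative but shifts the burden onto showing that a quadratic system over $\F_p$ has a solution; your sketch of that step deserves two corrections of emphasis. First, exceptionality is needed throughout, not only in your sub-case $\alpha(1+m)+k=0$: writing $\tilde n = s\,x^p + w\,n$ additively, your two component conditions reduce to $\alpha w^2 - ksw - ms^2 = 1$ together with one linear relation, and it is precisely the hypothesis that $k^2+4\alpha m$ is a non-zero square that makes this conic a hyperbola with $p-1$ rational points (were the discriminant zero, the form would be $\alpha$ times a square and, $\alpha$ being a non-residue, the conic would be empty). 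Second, once set up this way the ansatz $\tilde x = x^\sigma z^b$, $\tilde z = zx^e$ already succeeds for every exceptional parameter pair with $j=1$, including $m=0$ (where exceptionality forces $k\neq 0$ and one may take $b=0$), so neither the quotient-by-$\langle x^pn^t\rangle$ detour nor the extra exponent $\tau$ is actually needed --- though both workarounds you describe are themselves valid. Your preliminary identities ($\tilde x^p = x^{(\sigma+\alpha b)p}n^b$, the determinant $\sigma-be$, and the reduction to the two component conditions on $[\tilde x,\tilde y]$ and $[\tilde y,\tilde z]$) are all correct.
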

\begin{proof}
Pick $w\in G$ such that $w\notin Z(G)$ and $w^p=1$. Then the map $\psi_w$ is diagonalizable by Proposition~\ref{Exceptionalwhenjnot0}. So there exist
elements $u$ and $v$ of $Z(G)$ such that $\psi_w(u)=u^\gamma$ and $\psi_w(v)=v^\delta$, for some integers $\gamma,\delta$. Let $s^p=u$ and $t^p=v$.
Then $[s,t]\in w^iZ(G)$ for some $i$ coprime with $p$. If $ij\cong 1 \bmod p$ then it is easy to check that $[s^j,t]\in wZ(G)$. Replacing $s$ with $s^j$,
and then replacing $w$ with $[s,t]$ (which does not effect the map $\psi_w$), we see that $G$ has a presentation
\[
G = \langle s,t,w | s^{p^2}=t^{p^2} = w^p =1, [s,t]=w, [w,s]=s^{p\gamma}, [w,t]=t^{p\delta}\rangle.
\]
This presentation is dependent only on the eigenvalues of the map $\psi_w$. But from the matrix for $\psi_w$ given in the proof of
Proposition~\ref{proposition:invariants25}, we see that these depend on $\alpha$ only insofar as they depend on $\alpha m$. So if we take a distinguished
presentation with parameters $(1,k,\alpha m)$ for an extension $E$ of $Q_1$, then it is clear that $E\cong G$. It remains only to note that $E$ is an
exceptional extension of $Q_1$ by Proposition~\ref{Exceptionalwhenjnot0}.
\end{proof}

There are two exceptional extensions of $Q_\alpha$ not covered by Proposition~\ref{Isomorphismswhenj=1}, given by presentations $P_2$ and $P_4$ in
Table~\ref{tab:Qzetaparameters}.

\begin{proposition}
The extension of $Q_\alpha$ with presentation $P_2$ is an exceptional extension of $Q_1$.
\end{proposition}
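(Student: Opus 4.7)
The plan is to exhibit a non-trivial central subgroup $N$ of order $p$ inside the group $G$ such that $G/N \cong Q_1(p)$. Since $G$ is by hypothesis an exceptional extension of $Q_\alpha$, we have $\mu(G) < p^3 = \mu(Q_1)$, so producing such an $N$ suffices: the distinguished quotient $G/N \cong Q_1$ will then certify $G$ as exceptional for $Q_1$.

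Writing out the presentation explicitly with $\zeta=\alpha$ and $(j,k,m)=(0,1,0)$, the group $G$ is
\begin{align*}
G = \langle x,y,z,n \mid\ & n^p = x^{p^2} = y^p = 1,\ z^p = x^{\alpha p},\ n\ \textrm{central}, \\
& [x,y] = 1,\ [x,z] = y,\ [y,z] = x^{\alpha p} n\rangle.
\end{align*}
The first observation is that $x^p$ is central: since $\langle x,y\rangle$ is abelian with $y^p=1$, we have $(x^p)^z = (xy)^p = x^p y^p = x^p$. Together with $n$, this gives $\langle x^p, n\rangle \subseteq Z(G)$, an elementary abelian group of rank $2$. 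The candidates for $N$ are therefore the $p+1$ subgroups of order $p$ inside $\langle x^p,n\rangle$; note that $N=\langle n\rangle$ gives back $Q_\alpha$ by construction.

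The key step is to try the ansatz $N = \langle x^{ap} n \rangle$ and look for a change of generators in $G$ producing the $Q_1$ presentation modulo $N$. The relation $Z^p = X^p$ in $Q_1$ forces any rescaling $X = x^\mu$ to satisfy $\mu \equiv \alpha \pmod{p}$, since $z^p = x^{\alpha p}$. Setting $X = x^\alpha$, $Y = y^\alpha$, $Z = z$, a short induction using centrality of $x^p$ and $n$ (and $[x,y]=1$) gives $[y^\alpha, z] = (x^{\alpha p} n)^\alpha = x^{\alpha^2 p} n^\alpha$. Matching the target relation $[Y,Z] = X^p$ in $G/N$ becomes $x^{(\alpha^2 - \alpha)p} n^\alpha \in N$, which forces $a \equiv \alpha - 1 \pmod p$. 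Thus take
\[
N = \langle x^{(\alpha-1)p}\, n \rangle.
\]
Since $\alpha \not\equiv 1 \pmod p$, $N$ is a non-trivial central subgroup of order $p$. A routine check confirms that in $G/N$, the elements $X=x^\alpha$, $Y=y^\alpha$, $Z=z$ satisfy every relation of $Q_1(p)$ (with the relation $[Y,Z]=X^p$ equivalent to $N^\alpha = 1$ in $G/N$). Since $|G/N| = p^4 = |Q_1(p)|$, the induced surjection is an isomorphism.

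The main obstacle is simply to guess the correct $N$: the naive choices $\langle x^p\rangle$ or $\langle n\rangle$ fail, the latter trivially and the former because it yields a quotient in which $\bar z^p = 1$, incompatible with $Q_1$. The mixing by $x^{(\alpha-1)p}$ is necessary precisely to reconcile $z^p = x^{\alpha p}$ with the target relation $Z^p = X^p$ after the generator rescaling $X = x^\alpha$. Once this is spotted, the remainder of the argument is mechanical commutator bookkeeping, justified by Proposition~\ref{prop:ppower} and the centrality of $x^p$ and $n$.
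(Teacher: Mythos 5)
Your proof is correct and essentially the paper's own argument: your kernel $N=\langle x^{(\alpha-1)p}n\rangle$ is exactly the subgroup generated by the paper's new central generator $\tilde{n}=x^{(\alpha-1)p}n$, and your relation check is the same computation carried out by rescaling $x,y$ by $\alpha$ rather than replacing $z$ by $x^{1-\alpha}z$. The only cosmetic difference is in concluding exceptionality: the paper recognises the result as a distinguished presentation with parameters $(0,1,0)$ over $Q_1$ and cites Proposition~\ref{Exceptionalwhenj=0}, while you use $\mu(G)<p^3=\mu(Q_1(p))$ from the already-established exceptionality over $Q_\alpha$; both are valid.
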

\begin{proof}
Let $x,y,z,n$ be the generators of a group $G$ given by the presentation $P_2$.
Define elements $\tilde{z}=x^{1-\alpha}z$ and $\tilde{n}=x^{(\alpha-1)p}n^k$. It is easy to check that in terms of the
generators $x,y,\tilde{z},\tilde{n}$, the group $G$ affords the presentation
\begin{align*}
G&= \langle x,y,\tilde{z},\tilde{n} \mid \tilde{n}^p= x^{p^2} =y^p=1, \tilde{z}^p=x^p, [x,\tilde{z}]=y, [x,y] = 1,\\
& \quad \quad [y,\tilde{z}]=x^pn, [x,\tilde{n}]=[y,\tilde{n}]=[\tilde{z},\tilde{n}]=1\rangle.
\end{align*}
This is a distinguished presentation with parameters $(0,1,0)$ for an extension of $Q_1$, and as such is exceptional by
Proposition~\ref{Exceptionalwhenj=0}.
\end{proof}

\begin{proposition}
The extension of $Q_\alpha$ with presentation $P_4$ is an exceptional extension of $Q_1$
\end{proposition}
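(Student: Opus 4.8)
The plan is to imitate the proof just given for presentation $P_2$: I will exhibit an explicit change of generators carrying the $P_4$-presentation of this $Q_\alpha$-extension, namely
\[
G=\langle x,y,z,n \mid x^{p^2}=y^p=n^p=1,\ z^p=x^{\alpha p},\ n\ \textrm{central},\ [x,z]=y,\ [x,y]=n^{-1},\ [y,z]=x^{\alpha p}n^{\alpha+1}\rangle,
\]
into a distinguished presentation of an extension of $Q_1$ with parameter $j'=0$. Exceptionality will then follow at once from Proposition \ref{Exceptionalwhenj=0}, once I check that the resulting parameter $k'$ is non-zero (equivalently, one may simply note that $\mu(G)<p^3$ is already known, since $G$ is an exceptional extension of $Q_\alpha$).

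First I would pin down what the target presentation must be. Since the present $j$ equals $0$, the image of the $p$-power endomorphism of $G$ is $\langle x^p\rangle$ of order $p$; as this image is an isomorphism invariant, any presentation of $G$ as a $Q_1$-extension also has $j'=0$ by Proposition \ref{proposition:invariants25}. The same proposition gives $G'=\langle y,x^p,n\rangle$ of order $p^3$, which forces $m'\neq 0$, while exceptionality forces $k'\neq 0$. By Lemma \ref{lemma:parameters25} the only row of Table \ref{tab:Qzetaparameters} with all three features is $P_4$ itself, so the assertion is really that $G$ is isomorphic to the $Q_1$-extension with the \emph{same} parameters $(0,1,1)$.

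The point where this case departs from $P_2$, and the main obstacle, is that here $[x,y]=n^{-1}\neq 1$. In the $P_2$ argument the new central generator $\tilde n$ could be chosen freely (because $m'=0$ placed no relation on it), and it absorbed the discrepancy between $x^{\alpha p}$ and $x^p$. Here $\tilde n$ is pinned down by $[\tilde x,\tilde y]=\tilde n^{-m'}$; if one keeps $\tilde x,\tilde y\in\langle x,y,n\rangle$ then $[\tilde x,\tilde y]$ is a power of $[x,y]=n^{-1}$, so $\tilde n$ has trivial $x^p$-component and the relations $\tilde z^p=\tilde x^p$ and $[\tilde y,\tilde z]=\tilde x^p\tilde n^{m'+k'}$ cannot both be put into the $\zeta'=1$ shape. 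The resolution is to let the new generator $\tilde x$ mix $x$ and $z$: with $\tilde x=x^az$, $\tilde z=x^c$ and $\tilde y=[\tilde x,\tilde z]$, the commutator $[\tilde x,\tilde y]$ now acquires an $x^p$-component (produced by $[z,y]=x^{-\alpha p}n^{-(\alpha+1)}$), so the forced $\tilde n$ can carry exactly the $x^p$-discrepancy that $P_2$ handled for free. Working in the centre $Z=\langle x^p,n\rangle$ as a two-dimensional $\F_p$-space, and using Proposition \ref{prop:ppower} to expand $p$-th powers, the requirement that the two central elements $[\tilde x,\tilde y]$ and $[\tilde y,\tilde z](\tilde x^p)^{-1}$ both be powers of a single generator $\tilde n$ becomes a single linear-dependence condition on $(a,c)$. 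I would solve it explicitly: the relation $\tilde z^p=\tilde x^p$ gives $c=a+\alpha$, and the dependence condition then reduces to a linear equation for $a$, which has a solution over $\F_p$ for every $p>3$. Finally I would verify that the resulting $\tilde n$ is independent of $\tilde x^p$ and that $k'\neq 0$, which exhibits the required $Q_1$-presentation and completes the proof.
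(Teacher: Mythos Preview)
Your approach is correct and yields a valid proof. It is the same in spirit as the paper's---an explicit change of generators turning the $Q_\alpha$-presentation into a $Q_1$-presentation with $j'=0$---but the particular substitution differs. The paper takes $\tilde x=x^az^b$, $\tilde z=x^cz^d$ with $ad-bc=1$, using the solvability of $t^2-b^2=4\alpha$ over $\F_p$ to produce $a,b,c,d$; this lands directly on parameters $(0,1,\alpha)$. Your choice $\tilde x=x^az$, $\tilde z=x^c$ (i.e.\ $b=1$, $d=0$) is more restrictive but leads, after imposing $c=a+\alpha$, to the single linear condition $c(\alpha-1)=1$, hence $c=1/(\alpha-1)$; one checks that then $[\tilde x,\tilde y]$ and $[\tilde y,\tilde z]\tilde x^{-p}$ are dependent in $Z$ with non-trivial $n$-component, giving a well-defined $\tilde n$ and parameters $(0,k',m')$ with $k'=m'(1-\alpha)/\alpha\neq 0$. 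Your invariant argument pinning the target down to the $Q_1$-presentation $P_4$ is also sound: the criteria in the proof of Proposition~\ref{proposition:invariants25} for $j=0$, $m\neq 0$, $k\neq 0$ are all intrinsic to $G$ (size of the $p$-power image, $|G'|$, exponent of a canonical kernel) and do not depend on $\zeta$. The trade-off is that your construction is arithmetically lighter (it needs only $\alpha\neq 1$, not the representation $t^2-b^2=4\alpha$), while the paper's more symmetric substitution produces tidier final parameters.
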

\begin{proof}
There exist elements $t$ and $b$ of $C_p$ such that $t^2-b^2=4\alpha$. Define
\[
a=\frac{t-(2\alpha+1)b}{2},\quad c=-(\alpha+1)b,\quad d=\frac{t+(2\alpha+1)b}{2\alpha}.
\]
It is straightforward to check that these values satisfy the following equations:
\begin{equation}
a+\alpha b = c+\alpha d = \frac{t-b}{2} \label{Eq1}
\end{equation}
\begin{equation}
ad - bc = 1 \label{Eq2}
\end{equation}

Let $G$ be given by a distinguished presentation $P_4$. Define elements
\[
\tilde{x}=x^az^b,\quad \tilde{z}=x^cz^d,\quad \tilde{y}=[x,z],\quad \tilde{n}=x^{pb}n^{d-b}.
\]
Note that $\tilde{z}^p=\tilde{x}^p = x^{p(t-b)/2}$ by (\ref{Eq1}). Since $t\neq b$, it is clear that $\tilde{x}$ and $\tilde{z}$ have order~$p^2$.
Furthermore, it follows from (\ref{Eq2}) that $\tilde{y}\in yZ(G)$, and so $[\tilde{y},g]=[y,g]$ for all $g\in G$.
We also note that since $4\alpha$ is a quadratic non-residue modulo $p$, we must have $b\neq 0$, and so $\tilde{n}$ is a central element of order $p$.
It is straightforward to check the identities
\[
[\tilde{x},\tilde{y}] = \tilde{n}^{-\alpha}, \quad [\tilde{y},\tilde{z}] = \tilde{x}^p\tilde{n}^{\alpha+1}.
\]
With respect to the generators $\tilde{x},\tilde{y},\tilde{z},\tilde{n}$, the group $G$ affords the presentation
\begin{align*}
G&=\langle \tilde{x},\tilde{y},\tilde{z},\tilde{n} \mid \tilde{n}^p= \tilde{x}^{p^2} =\tilde{y}^p=1, \tilde{z}^p=\tilde{x}^p, [\tilde{x},\tilde{z}]=\tilde{y}, [\tilde{x},\tilde{y}] = \tilde{n}^{-\alpha}, \\
  &\quad \quad [\tilde{y},\tilde{z}]=\tilde{x}^p\tilde{n}^{\alpha+1}, [\tilde{x},\tilde{n}]=[\tilde{y},\tilde{n}]=[\tilde{z},\tilde{n}]=1\rangle.
\end{align*}
This is a distinguished presentation with parameters $(0,1,\alpha)$ for an extension of $Q_1$, and as such is exceptional by
Proposition~\ref{Exceptionalwhenj=0}.
\end{proof}

In the nomenclature of Table \ref{table:results}, we have that $F^{(1)}_1\cong F^{(\alpha)}_1$, $F^{(1)}_2\cong F^{(\alpha)}_2$, and
$F^{(1)}_3\cong F^{(\alpha)}_3$. Each group in $\{F^{(1)}_4(\lambda)\}$ is isomorphic with a group in 
$\{F^{(\alpha)}_5(\lambda)\}$, while each group in $\{F^{(1)}_5(\lambda)\}$ is isomorphic with a group in $\{F^{(\alpha)}_4(\lambda)\}$.

\subsection{The number of exceptional groups of order $p^5$}

It remains only to marshal the various results which together establish Theorem \ref{T:counting}. For $p=2$, the result is from \cite{EasdownPraeger}. For $p=3$, it is given by the computation described in Section \ref{s:computation}. For larger primes it follows easily from Propositions \ref{numberofexceptionalextensionsQ} and \ref{numberofexceptionalextensionsQ_1} together with Proposition \ref{prop:isomorphisms}.

\end{document}